\numberwithin{equation}{section}
\newtheorem{thm}{Theorem}[section]
\newtheorem{cor}[thm]{Corollary}
\newtheorem{lem}[thm]{Lemma}
\newtheorem{prop}[thm]{Proposition}
\newtheorem{defn}[thm]{Definition}
\newtheorem{exam}[thm]{Example}
\newtheorem{rem}[thm]{Remark}
\newcommand{\Hom}{\mbox{Hom}\,}
\newcommand{\Ext}{\mbox{Ext}\,}
\newcommand{\Tor}{\mbox{Tor}\,}
\newcommand{\Ker}{\mbox{Ker}\,}
\newcommand{\Ass}{\mbox{Ass}\,}
\newcommand{\e}{\mbox{e}\,}
\newcommand{\Supp}{\mbox{Supp}\,}
\newcommand{\Sup}{\mbox{Sup}\,}
\newcommand{\Tot}{\mbox{Tot}\,}
\newcommand{\Log}{\mbox{Log}\,}
\newcommand{\gr}{\mbox{gr}\,}
\newcommand{\depth}{\mbox{depth}\,}
\renewcommand{\dim}{\mbox{dim}\,}
\renewcommand{\Im}{\mbox{Im}\,}
\newcommand{\K}{\mbox{K}\,}
\newcommand{\pd}{\mbox{proj.dim}\,}
\newcommand{\id}{\mbox{inj.dim}\,}
\newcommand{\fd}{\mbox{flatdim}\,}
\newcommand{\gd}{\mbox{G--dim}\,}
\newcommand{\h}{\mbox{ht}\,}
\newcommand{\im}{\mbox{Im}\,}
\newcommand{\E}{\mbox{E}}
\renewcommand{\H}{\mbox{H}}
\newcommand{\g}{\mbox{G}}
\newcommand{\fm}{\mathfrak{m}}
\newcommand{\fp}{\mathfrak{p}}
\newcommand{\fq}{\mathfrak{q}}
\newcommand{\fn}{\mathfrak{n}}
\begin{document}
\bibliographystyle{amsplain}


\title[sequence of exact zero--divisors]
 {sequence of exact zero--divisors}

\bibliographystyle{amsplain}

     \author[M. T. Dibaei]{Mohammad T. Dibaei$^{1}$}
     \author[M. Gheibi]{Mohsen Gheibi$^2$}

\address{ $^{1, 2}$ Faculty of Mathematical Sciences and Computer,
Tarbiat Moallem University, Tehran, Iran; and School of Mathematics,
Institute for Research in Fundamental Sciences (IPM), P.O. Box:
19395-5746, Tehran, Iran. }

\email{dibaeimt@ipm.ir}
 \email{mohsen.gheibi@gmail.com}

\keywords{exact zero--divisors, sequence of exact zero--divisors,
quasi-complete intersection ideal, complete intersection ring,
regular ring.\\
M.T. Dibaei was supported in part by a grant from IPM
(No.$\cdots$).}

 \subjclass[2000]{13D07, 13D02, 14M10}


\begin{abstract}
The concept of a sequence of exact zero--divisors on a
noetherian local ring is defined and studied. Some properties of
sequences of exact zero--divisors are compared with regular
sequences.
\end{abstract}

\maketitle

\bibliographystyle{amsplain}

\section{introduction}

Let $(R,\fm)$ be a noetherian local ring. According to I. B. Henriques and L. M. \c{S}ega \cite{HS},
an element $a\in \fm$ is said to be an exact zero-divisor if $(0:_Ra)$ is isomorphic to $R/(a)$. They studied the minimal
free resolution of a finitely generated $R$--module for which $\fm^4=0$ and admits an exact zero--divisor.
They also showed that $R$ is Gorenstein if and only if $R/(a)$ is Gorenstein ring(see \cite[Remark 1.6]{HS}).
Also as shown in \cite{AHS}, $\dim R=\dim R/(a)$ and $R$ is Cohen-Macaulay if and only if $R/(a)$ is Cohen-Macaulay
(see \cite[Theorem 3.3 and Proposition 4.3]{AHS}).
These properties shows that exact zero-divisors behave like regular elements. On the other hand when $a$
is an exact zero--divisor on $R$,
one has $\pd_R R/(a)=\infty$ while $\gd_R R/(a)=0$. Recall that a finitely generated $R$--module $M$ is said to
have G--dimension zero and write $\gd_R(M)=0$ if $M$ is reflexive
(i.e. the natural map $M\longrightarrow M^{**}$ is an isomorphism),
and $\Ext^i_R(M,R)=0= \Ext^i_R(M^*,R)$ for all $i>0$, where
$(-)^*:=\Hom_R(-,R)$.
In \cite{CJRSW} and \cite{H}, the authors construct infinitely many pairwise non-isomorphic indecomposable
$\g$--dimension zero modules by using exact zero--divisors.

In particular, in \cite[Theorem 1.4]{CJRSW}, it is shown constructively
that, if $R$ is a short local ring of length $e$ and algebraically
closed residue field such that it admits an exact zero-divisor, then for each
positive integer $n$, there exists an infinite family of
indecomposable pairwise non-isomorphic $\g$--dimension zero modules
of length $ne$ with periodic free resolutions of
period at most $2$.

Recently, the homological behaviors of modules over local rings
modulo exact zero--divisors have been studied and compared with the
homological behavior of modules over local rings modulo regular
elements (see \cite{BCJ}). These ideas have encouraged us to define
and study sequences of exact zero--divisors, discuss about their
existence and lengths in comparison with the regular sequences.

In section 2, we define sequence of exact zero--divisors on an
arbitrary module over a commutative noetherian ring and study basic
properties of rings and modules which admit a sequence of exact
zero--divisors.

In section 3, we study sequence of exact zero-divisors over artinian local rings. As main result of this section,
we show that a standard graded short local ring $R$ is Koszul complete intersection if and only if it admits
a minimal sequence of exact zero-divisors of length equal to socle degree of $R$.

In section 4, we define strong sequences of exact zero--divisors
and study some conditions, under which, over a noetherian local
ring, a sequence of exact zero--divisors is a strong sequence of
exact zero--divisors. Also we give an upper bound for the maximal
length of a strong sequence of exact zero--divisors on a local ring in
terms of the ring multiplicity. Finally, it is shown that, under
certain conditions, the quotient of a complete intersection ring by
a sequence of exact zero--divisors is a regular ring.


\section{sequence of exact zero--divisors}

Throughout $R$ is commutative noetherian ring with identity
element. The notion of an exact zero divisor is introduced in
\cite{HS}.

\begin{defn}\cite[Definition]{HS}\label{D1}\ \emph{Let $R$ be a local ring. A non-unit element $x\neq 0$ in $R$ is
said to be an} exact zero--divisor \emph{if one of the following
equivalent conditions holds.
\begin{itemize}
\item[(i)] $(0:_Rx)\cong R/(x)$.
\item[(ii)] There exists $y\in R$ such that the sequence
$$\cdots\longrightarrow R \overset{y}\longrightarrow
R\overset{x}\longrightarrow R \overset{y}\longrightarrow
R\overset{x}\longrightarrow R\longrightarrow0$$ is a free resolution of
$R/(x)$ over $R$.
\item[(iii)] There exists $y\in R$ such that
$(0:_Rx)=(y)$ and $(0:_Ry)=(x)$.
\end{itemize}
In this case $(x,y)$ is called a {\it pair} of exact zero--divisors.
Note that for fixed $x$, the element $y$ is unique up to
multiplication by a unit. We call $x$ (resp. $y$) the twin of $y$ (resp. $x$).}
\end{defn}

\begin{rem}\label{R1}\ \emph{It is easy to see that if $(x,y)$ is a pair of exact
zero--divisors, then $\gd_R(R/(x))=\gd_R(R/(y))=0$. Hence by
Auslander-Bridger formula $\depth R=\depth R/(x)=\depth R/(y)$. Also
by  \cite[Theorem 3.3]{AHS} $\dim R=\dim R/(x)=\dim R/(y)$. Thus $R$
is Cohen-Macaulay if and only if $R/(x)$ (equivalently, $R/(y)$) is (maximal)
Cohen-Macaulay. Moreover $R$ is Gorenstein if and only if $R/(x)$
(equivalently, $R/(y)$) is Gorenstein (see \cite[Remark 1.6]{HS}).}
\end{rem}
\begin{defn}\label{D5}\ \emph{Let $R$ be a (not necessarily local) ring and let $M$ be an
$R$--module (not necessarily finitely generated). Let $x$ be an
element of $R$. We say that $x$ is an} exact zero--divisor \emph{on
$M$ if $M\neq xM$, $x\notin 0:_RM$ and there is $y\in R$ such that
$0:_Mx=yM$ and $0:_My=xM$. In this case we call $(x,y)$ a}
\emph{pair of exact zero--divisors} \emph{on $M$. We refer to $y$ as
a {\it twin} of $x$. Let $x_1,\cdots, x_n$ be elements in $R$. We
call $x_1, \cdots, x_n$ a} sequence of exact zero--divisors \emph{on
$M$ if $x_i$ is an exact zero--divisor on $M/(x_1,\cdots,x_{i-1})M$
for all $i$, $1\leq i\leq n$. We call $x_1, \cdots, x_n$ is a
sequence of exact zero divisors on $R$ if it is so when $R$ is
considered  as an $R$--module.}
\end{defn}

\begin{prop}\label{P1}\ Let $R$ be a ring and let $M$ be an
$R$--module. Let $x_1, \cdots, x_n$ be a sequence of exact
zero--divisors on $M$. Then $0:_M(x_1, \cdots, x_n)\cong M/(x_1,
\cdots, x_n)M$. In particular, if $x_1, \cdots, x_n$ is a sequence
of exact zero--divisors on $R$, then $0:_R(x_1, \cdots, x_n)$ is a
principal ideal.
\end{prop}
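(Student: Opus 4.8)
The plan is to induct on $n$. The base case $n=1$ is essentially the definition: if $x_1$ is an exact zero-divisor on $M$ with twin $y_1$, then $0:_M x_1 = y_1 M$, and since $0:_M y_1 = x_1 M$ we have a surjection $M \twoheadrightarrow y_1 M$ with kernel $x_1 M$, hence $y_1 M \cong M/x_1 M$. Combining, $0:_M x_1 \cong M/x_1 M$, which is the claim for $n=1$.

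For the inductive step, suppose the result holds for sequences of length $n-1$, and let $x_1,\dots,x_n$ be a sequence of exact zero-divisors on $M$. Set $\overline{M} = M/(x_1,\dots,x_{n-1})M$. By definition $x_n$ is an exact zero-divisor on $\overline{M}$, so by the base case $0:_{\overline M} x_n \cong \overline M / x_n \overline M = M/(x_1,\dots,x_n)M$. Thus it suffices to identify $0:_{\overline M} x_n$ with $0:_M(x_1,\dots,x_n)$. First I would observe that by the inductive hypothesis applied to $x_1,\dots,x_{n-1}$, there is an isomorphism $0:_M(x_1,\dots,x_{n-1}) \cong \overline M$. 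The point to verify is that, under this isomorphism, the submodule $0:_M(x_1,\dots,x_n) = \{m \in 0:_M(x_1,\dots,x_{n-1}) : x_n m = 0\}$ corresponds to $0:_{\overline M} x_n$. This requires knowing the isomorphism $0:_M(x_1,\dots,x_{n-1}) \cong \overline M$ is compatible with multiplication by $x_n$ — i.e. it is an isomorphism of $R$-modules, which it is, being built from the canonical maps appearing in the base case (a composite of the inclusion $0:_M y \hookrightarrow M$ type maps and the quotient map, all $R$-linear). Since any $R$-module isomorphism carries $0:_{(-)}x_n$ to $0:_{(-)}x_n$, the identification follows, completing the induction.

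The main obstacle — really the only subtlety — is bookkeeping the isomorphism in the inductive step carefully enough to see it is $R$-linear and hence respects the annihilator-of-$x_n$ submodules on both sides; it would be cleanest to phrase the induction so that one carries along the explicit $R$-isomorphism $0:_M(x_1,\dots,x_{i})\xrightarrow{\ \sim\ } M/(x_1,\dots,x_i)M$ rather than just its existence, so that the compatibility with multiplication by $x_{i+1}$ is transparent. The final sentence of the statement is then immediate: taking $M=R$, the module $M/(x_1,\dots,x_n)M = R/(x_1,\dots,x_n)$ is cyclic, so $0:_R(x_1,\dots,x_n)$, being isomorphic to it, is a cyclic $R$-module, i.e.\ a principal ideal.
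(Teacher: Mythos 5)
Your proof is correct, and it is the same basic strategy as the paper's: induct on $n$, with the base case given by the explicit isomorphism $0:_Mx=yM\cong M/xM$ coming from the surjection $M\to yM$ with kernel $0:_My=xM$. The only difference is in the bookkeeping of the inductive step. The paper peels off $x_1$: it passes to $\overline{R}=R/(x_1)$ and $\overline{M}=M/x_1M$, applies induction to $\overline{x_2},\dots,\overline{x_n}$ there, and transfers the answer back to $M$ through a chain of $\Hom$ identifications and Hom--tensor adjunction (using $\Hom_R(R/(x_1),M)\cong M/x_1M$). You instead peel off $x_n$: induction gives an $R$-isomorphism $0:_M(x_1,\dots,x_{n-1})\cong M/(x_1,\dots,x_{n-1})M$, and since any $R$-isomorphism carries the $x_n$-annihilator submodule to the $x_n$-annihilator submodule, this restricts to $0:_M(x_1,\dots,x_n)\cong 0:_{\overline{M}}x_n$, after which the base case finishes. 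Your version is slightly more elementary in that it avoids the adjunction argument entirely; both are complete proofs, and the final deduction of principality for $M=R$ is handled identically.
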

\begin{proof}  Let $x\in R$ be an exact zero--divisor on $M$ so that there is $y\in R$ such that $0:_Mx=yM$ and $0:_My=xM$.
Clearly, the map $0:_Mx\longrightarrow M/xM $ defined by $ym\mapsto m+xM$ for all $m\in
M$ is an $R$--isomorphism.\\ Now let
$n>1$. Set $\overline{R}=R/(x_1)$ and $\overline{M}=M/x_1M$. Then
$\overline{x_2},\cdots,\overline{x_n}$ is a sequence of exact
zero--divisors on $M/x_1M$ over $R/(x_1)$. By induction we have
\[\begin{array}{rl}\
M/(x_1,\cdots, x_n)M&\cong0:_{\overline{M}}(\overline{x_2},\cdots,\overline{x_n})\\
&\cong\Hom_{R/(x_1)}(R/(x_1, \cdots, x_n),M/(x_1)M)\\
&\cong\Hom_{R/(x_1)}(R/(x_1, \cdots, x_n),\Hom_R(R/(x_1),M))\\
&\cong\Hom_R(R/(x_1, \cdots, x_n)\otimes_{R/(x_1)}R/(x_1),M)\\
&\cong\Hom_R(R/(x_1, \cdots, x_n),M)\\
&\cong 0:_M(x_1, \cdots, x_n).
\end{array}\]
\end{proof}

\begin{prop}\label{P10} Let $R$ be a local ring and let $M$ be a
finitely generated $R$--module. Assume that $x,y$ are elements of
$R$. Then the following statements are equivalent.
\begin{itemize}
\item[(i)] $(x,y)$ is a pair of exact zero--divisors on $M$.
\item[(ii)] $(x,y)$ is a pair of exact zero--divisors on $M/\alpha
M$ for any $M$--regular element $\alpha\in R$.
\item[(iii)] $(x,y)$ is a pair of exact zero--divisor
on $M/\alpha^nM$ for all $n\geq 1$ and for some $M$--regular element $\alpha\in R$.
\end{itemize}
\end{prop}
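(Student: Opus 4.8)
The plan is to establish the cycle of implications $(i)\Rightarrow(ii)\Rightarrow(iii)\Rightarrow(i)$, with $(iii)\Rightarrow(i)$ being the step that carries the real content. For the implication $(i)\Rightarrow(ii)$, I would fix an $M$--regular element $\alpha$ and try to show that reduction modulo $\alpha$ preserves the defining relations $0:_Mx=yM$ and $0:_My=xM$. The key observation is that the short exact sequence $0\to M\xrightarrow{\alpha}M\to M/\alpha M\to 0$ together with the periodic free resolution of $R/(x)$ furnished by Definition \ref{D1}(ii) gives, after applying $-\otimes_R M$, a complex computing $\Tor^R_i(R/(x),M)$; since $(x,y)$ is a pair of exact zero--divisors on $M$, these Tor modules vanish in positive degrees, so the periodic complex $\cdots\to M\xrightarrow{x}M\xrightarrow{y}M\xrightarrow{x}M$ is exact except at the right end. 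Tensoring further with the Koszul complex on $\alpha$ (equivalently, using that $\alpha$ is $M$--regular and hence $M/\alpha M$--acyclic on this complex), one sees the analogous periodic complex over $M/\alpha M$ stays exact, which is exactly the statement that $(x,y)$ is a pair of exact zero--divisors on $M/\alpha M$. One must also check the nondegeneracy conditions $M/\alpha M\neq x(M/\alpha M)$ and $x\notin 0:_R M/\alpha M$, which follow from Nakayama since $\alpha\in\fm$.

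For $(ii)\Rightarrow(iii)$, I would simply iterate: given that $(x,y)$ is a pair of exact zero--divisors on $M$ and $\alpha$ is $M$--regular, part $(ii)$ gives the conclusion for $M/\alpha M$; but $\alpha$ is also regular on $M/\alpha M$? Not necessarily, so instead I would argue directly by induction on $n$ using the short exact sequence $0\to M/\alpha^{n-1}M\xrightarrow{\alpha}M/\alpha^n M\to M/\alpha M\to 0$ (valid because $\alpha$ is $M$--regular), and the fact, to be extracted from the proof of $(i)\Rightarrow(ii)$, that the periodic complex remains exact after tensoring with this resolution-of-finite-length quotient; alternatively, observe $\alpha^n$ is itself $M$--regular and apply $(ii)$ with $\alpha$ replaced by $\alpha^n$, which immediately yields $(iii)$. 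This last route is cleanest and essentially free once $(i)\Rightarrow(ii)$ is in hand.

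The substantive implication is $(iii)\Rightarrow(i)$: we are given that $(x,y)$ is a pair of exact zero--divisors on $M/\alpha^n M$ for all $n\geq 1$, for some fixed $M$--regular $\alpha$, and we must descend to $M$ itself. The strategy is to pass to the $\alpha$--adic completion, or rather to exploit that $M=\varprojlim M/\alpha^n M$ when $M$ is finitely generated over the noetherian local ring $R$ and $\alpha\in\fm$ (here I would use $\fm$--adic completeness arguments, or Krull intersection to get $\bigcap_n \alpha^n M=0$, so that $M$ embeds in $\varprojlim M/\alpha^n M$). Concretely: to show $0:_M x\subseteq yM$, take $m\in M$ with $xm=0$; for each $n$ the image $\bar m\in M/\alpha^n M$ lies in $0:_{M/\alpha^n M}x=y(M/\alpha^n M)$, so $m\in yM+\alpha^n M$ for every $n$; then a standard argument — choosing $m=ym_n+\alpha^n t_n$ and comparing successive $n$, using that $\alpha$ is $M$--regular and $yM$ is a finitely generated submodule, via the Artin--Rees lemma applied to $yM\subseteq M$ — shows $m\in yM$. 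The symmetric argument gives $0:_M y\subseteq xM$, and the reverse inclusions $yM\subseteq 0:_M x$ and $xM\subseteq 0:_M y$ follow because $xy$ annihilates $M/\alpha^n M$ for all $n$, hence $xyM\subseteq\bigcap_n\alpha^n M=0$. Finally $M\neq xM$ and $x\notin 0:_R M$ are inherited from any single $M/\alpha^n M$ by Nakayama and by $\bigcap\alpha^n M=0$ respectively.

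The main obstacle is the descent step $(iii)\Rightarrow(i)$, specifically proving that the inclusions which hold modulo every power $\alpha^n$ actually hold in $M$; this is where the finite generation of $M$, the regularity of $\alpha$, and the Artin--Rees lemma must be combined carefully, since a naive ``take the limit'' argument only gives $0:_M x\subseteq\bigcap_n(yM+\alpha^n M)$ and one needs Artin--Rees (or completeness) to conclude this intersection equals $yM$. I expect the rest of the proof to be routine bookkeeping with short exact sequences and Nakayama's lemma.
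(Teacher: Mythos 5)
Your overall architecture (the cycle of implications, with (ii)$\Rightarrow$(iii) obtained by replacing $\alpha$ with the $M$--regular element $\alpha^n$, and (iii)$\Rightarrow$(i) via $xyM\subseteq\bigcap_n\alpha^nM=0$ together with $0:_Mx\subseteq\bigcap_n(yM+\alpha^nM)=yM$ by Krull/Artin--Rees) coincides with the paper's, and those two implications are fine as you present them. The problem is in (i)$\Rightarrow$(ii), where your sketch has two defects. First, you invoke the periodic free resolution of $R/(x)$ coming from condition (ii) of the definition of an exact zero--divisor, and the vanishing of $\Tor^R_i(R/(x),M)$; but Proposition \ref{P10} does \emph{not} assume that $(x,y)$ is a pair of exact zero--divisors on the ring $R$, only on the module $M$, so $R/(x)$ need not admit that resolution and the Tor groups you describe are not computed by the periodic complex. (The detour is also unnecessary: the exactness of $\cdots\to M\xrightarrow{y}M\xrightarrow{x}M\to M/xM\to 0$ is literally a restatement of (i).)

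Second, and more seriously, the step ``tensoring further with the Koszul complex on $\alpha$ \dots\ the analogous periodic complex over $M/\alpha M$ stays exact'' is exactly where the content of this implication lies, and your justification does not close it. Writing $C$ for the periodic complex of copies of $M$ ending in homological degree $0$, the short exact sequence of complexes $0\to C\xrightarrow{\alpha}C\to C/\alpha C\to 0$ (valid because $\alpha$ is $M$--regular) yields $H_1(C/\alpha C)\cong 0:_{H_0(C)}\alpha=0:_{M/xM}\alpha$; that is, the obstruction to $0:_{M/\alpha M}x=y(M/\alpha M)$ is precisely the failure of $\alpha$ to be regular on $M/xM$. You never verify this regularity. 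The paper does: since $M/xM\cong 0:_Mx=yM\subseteq M$, one has $\Ass_R(M/xM)\subseteq\Ass_RM$, so every $M$--regular element is $M/xM$--regular, and then \cite[Proposition 1.1.5]{BH} reduces the exact complex modulo $\alpha$. With that one observation inserted your argument for (i)$\Rightarrow$(ii) goes through; without it, the step is a genuine gap.
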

\begin{proof} (i)$\Rightarrow$(ii). Let $\alpha \in R$ be an $M$--regular element. As $yM=0:_Mx\cong M/xM$,
one has $\Ass_RM/xM\subseteq\Ass_RM$. Hence $\alpha$ is also an $M/xM$--regular element. The exact sequence
$$\cdots\overset{y}\longrightarrow M\overset{x}\longrightarrow M
\longrightarrow M/xM\longrightarrow 0,$$ by \cite[Proposition 1.1.5]{BH}, implies the exact sequence
$$\cdots\overset{y}\longrightarrow M/\alpha M
\overset{x}\longrightarrow M/\alpha M \longrightarrow
M/(x,\alpha)M\longrightarrow 0.$$ Thus $(x,y)$ is a
pair of exact zero--divisors on $M/\alpha M$.

(ii)$\Rightarrow$(iii) is clear.

(iii)$\Rightarrow$(i). We have
$xyM\subseteq\cap_{n\geq1}\alpha^nM=0$. Hence $yM\subseteq0:_Mx$.
Conversely, let $m\in 0:_Mx$. From the fact that  $(x,y)$ is a pair of exact zero--divisors
on $M/\alpha^nM$ for all $n\geq 1$, we have $m\in yM+\alpha^nM$ for
all $n\geq 1$. Therefore $m\in yM$. Similarly $0:_My=xM$.
\end{proof}

Now we are able to show that $M$ and $M/xM$ have equal depths whenever $x$ is an exact zero--divisor on $M$.
\begin{cor}\label{C10} Assume that $(R,\fm,k)$ is a local ring, $M$ a
finitely generated $R$--module and that $(x,y)$ is a pair of exact
zero--divisors on $M$. Then $$\emph\depth_RM=\emph\depth_RM/xM=\emph\depth_RM/yM.$$
\end{cor}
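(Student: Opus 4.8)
The plan is to reduce the statement about $M$ to a statement about a module over the ring $R/xR$, where ordinary depth behaves well, and then invoke Proposition~\ref{P10} together with Remark~\ref{R1}. First I would recall that if $(x,y)$ is a pair of exact zero--divisors on $M$, then by Proposition~\ref{P1} (applied with $n=1$) we have $M/xM\cong 0:_My=xM$, so in particular $M/xM$ is isomorphic to a submodule of $M$; symmetrically $M/yM\cong 0:_Mx=yM\subseteq M$. The key observation is that $\Ass_R(M/xM)\subseteq\Ass_R M$ and likewise $\Ass_R(M/yM)\subseteq\Ass_R M$, which immediately gives $\depth_R M\leq\depth_R(M/xM)$ is \emph{not} what we want directly, so instead I would argue at the level of a maximal $M$--regular sequence.

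The main step is as follows. Since $R$ is noetherian local and $M$ is finitely generated, choose an $M$--regular sequence $\alpha_1,\dots,\alpha_d$ of length $d=\depth_R M$ contained in $\fm$; this is also an $M/xM$--regular sequence because $\Ass_R(M/xM)\subseteq\Ass_R M$ (as $M/xM$ embeds into $M$ via the isomorphism $M/xM\cong yM$), and more generally $\alpha_1,\dots,\alpha_d$ stays regular on each successive quotient since $\Ass_R\bigl((M/(\alpha_1,\dots,\alpha_{i})M)/x(M/(\alpha_1,\dots,\alpha_i)M)\bigr)\subseteq\Ass_R\bigl(M/(\alpha_1,\dots,\alpha_i)M\bigr)$ by the same embedding argument applied to $M/(\alpha_1,\dots,\alpha_i)M$. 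Hence $\depth_R(M/xM)\geq d=\depth_R M$. For the reverse inequality, by Proposition~\ref{P10} (i)$\Rightarrow$(ii), $(x,y)$ remains a pair of exact zero--divisors on $M/\alpha M$ for every $M$--regular element $\alpha$; iterating, $(x,y)$ is a pair of exact zero--divisors on $\overline{M}:=M/(\alpha_1,\dots,\alpha_d)M$, a module of depth $0$. If $\depth_R(M/xM)>d$ then there would be an $(M/xM)$--regular element in $\fm$, hence (since $\alpha_1,\dots,\alpha_d$ is also $(M/xM)$--regular and $(M/xM)/(\alpha_1,\dots,\alpha_d)(M/xM)\cong\overline{M}/x\overline{M}$) a $\overline{M}/x\overline{M}$--regular element $\beta\in\fm$; but $\overline{M}/x\overline{M}\cong 0:_{\overline M}y$ is a nonzero submodule of $\overline{M}$, and $\depth\overline M=0$ means $\fm\in\Ass_R\overline M$, and one checks $\fm\in\Ass_R(\overline M/x\overline M)$ as well because the socle of $\overline M$ meets $0:_{\overline M}y$ nontrivially (any socle element is annihilated by $x$, hence lies in $0:_{\overline M}x=y\overline M$, and symmetrically). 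So $\depth_R(M/xM)=d=\depth_R M$, and by symmetry in $x$ and $y$ the same holds for $M/yM$.

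The step I expect to be the main obstacle is verifying that $\depth\overline M=0$ forces $\depth(\overline M/x\overline M)=0$, i.e.\ that passing to the exact--zero--divisor quotient does not raise depth from zero to positive. The cleanest way around this is the socle argument sketched above: every nonzero socle element $m$ of $\overline M$ satisfies $xm=0$, hence $m\in 0:_{\overline M}x=y\overline M$, so $m=y m'$ for some $m'$, and then $m=ym'\in 0:_{\overline M}x$ shows $m$ survives in $\overline M/x\overline M\cong 0:_{\overline M}y$ under the isomorphism of Proposition~\ref{P1} — indeed that isomorphism sends $m'+x\overline M$ to... one must be slightly careful to produce an actual nonzero socle element of $\overline M/x\overline M$, but since $\overline M/x\overline M\cong 0:_{\overline M}y\neq 0$ and this submodule inherits $\fm$ as an associated prime exactly when the socle of $\overline M$ is not entirely contained in the complement of $0:_{\overline M}y$ — which it is not, as just shown — we get $\depth(\overline M/x\overline M)=0$. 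Alternatively, and perhaps more robustly, one can avoid the socle bookkeeping by noting $\depth_R(M/xM)=\depth_R(0:_My)$ and $0:_My\subseteq M$ has the same associated primes considerations, combined with the Auslander--Bridger--type observation in Remark~\ref{R1}; in the finitely generated setting over a local ring, $\depth_R(M/xM)=\depth_R M$ then follows from the fact that $M$ and $M/xM$ have, up to the action of $x$, the same local cohomology in degree $=\depth$, via the long exact sequence attached to $0\to yM\to M\xrightarrow{x} M\to M/xM\to 0$ split into two short exact sequences $0\to yM\to M\to M/yM\to 0$ and $0\to M/xM\to M\to M/yM\to 0$ (using $yM=0:_Mx\cong M/xM$ and $xM=0:_My\cong M/yM$). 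Chasing $\H^i_\fm(-)$ through these two sequences pins down $\depth_RM/xM=\depth_RM$ directly.
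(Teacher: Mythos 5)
Your proof is correct and follows essentially the same route as the paper: both establish $\depth_R(M/xM)\ge\depth_RM$ from $\Ass_R(M/xM)\subseteq\Ass_RM$ (via the embedding $M/xM\cong 0:_Mx\subseteq M$), reduce to the case $\depth_RM=0$ using Proposition \ref{P10}, and then check that depth stays zero --- the paper does this last step by the adjunction $\Hom_R(k,M/xM)\cong\Hom_R(k\otimes_RR/(x),M)\cong\Hom_R(k,M)$, which is exactly your socle--containment argument in disguise, since every socle element of $M$ is killed by $x$ and hence lies in $0:_Mx\cong M/xM$. One slip to correct in your opening paragraph: the identifications are swapped --- Proposition \ref{P1} gives $M/xM\cong 0:_Mx=yM$ and $M/yM\cong 0:_My=xM$, not the other way around (you use the correct version later on).
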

\begin{proof} Note that if $\alpha_1,\cdots,\alpha_n$ is an
$M$--regular sequence then it follows from Proposition \ref{P10}, that $\alpha_1,\cdots,\alpha_n$
is also an $M/xM$ and $M/yM$--regular
sequence. Hence $\depth_RM/xM\geq\depth_RM$. As, by Proposition \ref{P10}, $x$ is an exact zero--divisor on
$M/(\alpha_1,\cdots,\alpha_n)M$,
we may assume that $\depth_RM=0$. Note that, by Proposition \ref{P1}, $\Hom_R(R/(x),M)\cong M/xM$. Then we have
\[\begin{array}{rl} \Hom_R(k,M/xM)&\cong\Hom_R(k,\Hom_R(R/(x),M))\\
&\cong\Hom_R(k\otimes_RR/(x),M)\\&\cong\Hom_R(k,M)\not =0.\\
\end{array}\]
Thus $\depth_RM/xM=0$. For $M/yM$ we treat in the same way.
\end{proof}

In the following result we prove that, over local ring $R$, if
$(x,y)$ is a pair of exact zero--divisors on a finitely generated
$R$--module $M$, then $M$, $M/xM$ and $M/yM$ have equal dimensions.
Its proof is similar to that of \cite[Theorem 3.3]{AHS}. Here we
bring the proof for convenience of the reader.

First we need the following lemma which is the module version of
\cite[Theorem 4.1]{FH}. Note that $\ell(-)$ denotes the length
function.

\begin{lem}\label{N} Let $R$ be a local ring and let $M$ be a finitely generated
$R$--module with $\dim M=1$. Let $x\in R$ be a parameter element of
$M$ and set $y=ux$, $u\in R$. If the map $M/xM
\overset{u}\longrightarrow M/yM$ given by $m+xM\longmapsto um+yM$ is
injective, then $y$ is also a parameter element of $M$.
\end{lem}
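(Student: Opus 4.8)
The plan is to follow the strategy of \cite[Theorem 4.1]{FH}, translated into the module setting, and to show that the hypothesis forces the length $\ell(M/yM)$ to be finite, which is equivalent to $y$ being a parameter element of $M$ (since $\dim M = 1$, an element is a parameter element precisely when the quotient has finite length). First I would record that $x$ being a parameter element of the one-dimensional module $M$ means $\ell(M/xM) < \infty$ and $x \notin \fp$ for every $\fp \in \Supp M$ with $\dim R/\fp = 1$. The element $y = ux$ certainly kills no one-dimensional prime in $\Supp M$ only if $u$ does not lie in such a prime either; the injectivity hypothesis on multiplication by $u$ on $M/xM$ is exactly the tool designed to rule out the bad behaviour of $u$.

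Next I would consider the short exact sequence
\[
0 \longrightarrow (0:_M x) \longrightarrow M \overset{x}\longrightarrow M \longrightarrow M/xM \longrightarrow 0,
\]
and similarly for $y$, but the cleaner route is to relate $M/yM = M/uxM$ to $M/xM$ directly. Multiplication by $u$ gives an exact sequence
\[
M/xM \overset{u}\longrightarrow M/xM \longrightarrow M/(x,u)M \longrightarrow 0,
\]
whose image, after composing with the surjection $M \twoheadrightarrow M/xM$, identifies $uM + xM$ inside $M$. I would use the hypothesis that $M/xM \overset{u}\longrightarrow M/yM$ is injective to get a short exact sequence $0 \to M/xM \to M/yM \to M/(x,u)M$-type relation, and then play the two quotients off against each other. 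Concretely, since $y = ux$, we have $yM \subseteq xM$, so there is a surjection $M/yM \twoheadrightarrow M/xM$ with kernel $xM/yM$; and $xM/yM = xM/uxM$ is a homomorphic image of $M/uM$ via $m \mapsto xm$. Meanwhile the injective map $M/xM \to M/yM$ shows $\ell(M/yM) \geq \ell(M/xM)$, and I expect one can pin down $\ell(M/yM) = \ell(M/xM) + \ell(xM/yM)$ with $\ell(xM/yM) \le \ell(M/uM)$, so finiteness of $\ell(M/yM)$ reduces to finiteness of $\ell(M/uM)$, i.e.\ to showing $u$ is not in any one-dimensional prime of $\Supp M$.

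To get that last point I would argue by localization: suppose $\fp \in \Supp M$ with $\dim R/\fp = 1$. Then $M_\fp$ has finite length over $R_\fp$, and $x \notin \fp$ forces $x$ to be a unit (equivalently $M_\fp$-regular up to nilpotence) on $M_\fp$, so $(M/xM)_\fp = 0$. If $u \in \fp$, then on $M_\fp$ the map $M_\fp \overset{u}\longrightarrow M_\fp$ is not injective (as $M_\fp$ is a nonzero finite-length module and $u$ lies in its support's maximal ideal, so $u$ is nilpotent on it), which would contradict the injectivity of $M/xM \overset{u}\longrightarrow M/yM$ once one checks this injectivity localizes appropriately — but $(M/xM)_\fp = 0$, so localizing at $\fp$ gives no contradiction directly, which is the subtle point. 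The correct fix is to work with the associated primes of $M$ of dimension one rather than all of $\Supp M$: if $\fp$ is such a prime and $u \in \fp$, choose $m \in M$ with $\Ann_R m = \fp$ (a minimal prime of $\dim 1$), then $um \in xM$ would be needed for $um + xM = 0$, but since $x$ is a parameter one checks $um \notin xM$ unless $um = 0$, giving $u \in \fp$ consistent but then producing a nonzero element of the kernel of $M/xM \overset{u}\longrightarrow M/yM$. Making this argument precise — correctly identifying which element witnesses the failure of injectivity when $u$ lies in a one-dimensional associated prime — is the main obstacle; everything else is bookkeeping with length-additivity in exact sequences of finite-length modules.
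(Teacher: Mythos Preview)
Your overall strategy---reduce to showing that $u$ itself is a parameter element of $M$, then conclude that $y=ux$ is one as well---is sound, and you correctly locate the crux: one must rule out $u\in\fp$ for some $\fp\in\min\Supp M$. But your proposed resolution via associated primes cannot be completed. Unwinding the map $M/xM \overset{u}\longrightarrow M/yM$, an element $m+xM$ lies in the kernel iff $um\in uxM$, i.e.\ iff $m\in xM+(0:_Mu)$; hence the injectivity hypothesis is \emph{equivalent} to the containment $0:_Mu\subseteq xM$. Consequently every $m$ with $um=0$---in particular any $m$ with $\Ann_Rm=\fp$ for an associated prime $\fp\ni u$---already satisfies $m\in xM$, so $m+xM=0$. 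There is no witness of the kind you are looking for, precisely because injectivity holds; the search is circular, not merely imprecise.

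The paper breaks this circularity with a multiplicity argument rather than a direct element chase. Assuming $u\in\fp$ for some $\fp\in\min\Supp M$, set $N=0:_Mu$; then $\fp\in\Supp N$, so $\dim N=1$ and the multiplicity formula gives $e(x,N)=\ell(N/xN)-\ell(0:_Nx)>0$. A short exact-sequence count shows $\ell(0:_Nx)=\ell(0:_M(x,u))=\ell\big((0:_Mx)/u(0:_Mx)\big)$. Now the injectivity hypothesis, rewritten as $N\subseteq xM$, yields $N=x(0:_Mxu)$, and a chain of natural isomorphisms embeds $N/xN$ into $(0:_Mx)/u(0:_Mx)$. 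Comparing lengths gives $\ell(N/xN)\le\ell(0:_Nx)$, contradicting $e(x,N)>0$. The missing idea in your proposal is exactly this appeal to $e(x,N)>0$ to force a strict length inequality; the information that $N\subseteq xM$ is used, but only after one has an independent lower bound on $\ell(N/xN)$ to play it against.
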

\begin{proof} We may assume that $\dim R=1$. As $x$ is a parameter element of $M$, $0:_Mx$ has finite length.
Hence it follows from the exact sequence
$$0\longrightarrow 0:_M(x,u)\longrightarrow 0:_Mx \overset{u}\longrightarrow 0:_Mx\longrightarrow
\frac{0:_Mx}{u(0:_Mx)}\longrightarrow 0$$
that $\ell(\frac{0:_Mx}{u(0:_Mx)})=\ell(0:_M(x,u))$.

Assume contrarily that $y$ is not a parameter element of $M$. Hence
$u$ is not a parameter element of $M$. Then there exists a prime
ideal $\fp\in\min\Supp M$ such that $u\in \fp$, so that
$\dim_R(0:_Mu)>0$. Set $N=0:_Mu$. We have $e(x,N)>0$, where $e(x,N)$
is the multiplicity of $N$ with respect to the parameter ideal
$(x)$. By \cite[Theorem 4.7.4]{BH}, we have
$\ell(N/xN)-\ell(0:_Nx)=e(x,N)>0$ and as $0:_Nx=0:_M(x,u)$, we
obtain
$$\ell(N/xN)>\ell(\frac{0:_Mx}{u(0:_Mx)}).$$ Now as the map
$M/xM\overset{u}\longrightarrow M/yM$ is injective, we have
$N\subseteq xM$ and therefore $N=x(0:_Mxu)$. Hence
$$N/xN=\frac{x(0:_Mxu)}{xN}\cong \frac{0:_Mxu}{N+(0:_Mx)}\cong
\frac{u(0:_Mxu)}{u(0:_Mx)}\subseteq \frac{0:_Mx}{u(0:_Mx)}.$$ It
follows that $\ell(\frac{0:_Mx}{u(0:_Mx)})\geq \ell(N/xN)$, which is
a contradiction.
\end{proof}

\begin{prop}\label{Pr} Let $(R,\fm,k)$ be a local ring and let $M$ be a
finitely generated $R$--module of dimension $d$. Let $(x,y)$ be a
pair of exact zero--divisors on $M$. Then
$\emph\dim_RM=\emph\dim_RM/xM=\emph\dim_RM/yM$.
\end{prop}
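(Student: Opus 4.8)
The plan is to reduce to the one-dimensional case and then invoke Lemma \ref{N}. First I would prove the inequality $\dim_R M/xM \le \dim_R M$, which is automatic since $M/xM$ is a quotient of $M$; the same holds for $M/yM$. So the content is the reverse inequality. For this I would argue by induction on $d = \dim_R M$. If $d=0$ everything is trivial. If $d \ge 1$, note that since $yM = 0:_M x \cong M/xM$ (by Definition \ref{D5}), we have $\Ass_R M/xM \subseteq \Ass_R M$, and likewise $\Ass_R M/yM \subseteq \Ass_R M$; in particular $\dim_R M/xM \le d$ and, more importantly, any prime $\fp$ with $\dim R/\fp = d$ that lies in $\Supp M$ need not a priori survive in $M/xM$, so I cannot simply localize. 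Instead I would use prime avoidance to pick an element $\alpha \in \fm$ that is a parameter on $M$, on $M/xM$, and on $M/yM$ simultaneously (possible because $\Ass_R M/xM$, $\Ass_R M/yM \subseteq \Ass_R M$, so avoiding the top-dimensional primes of $\Ass_R M$ that are not in $\fm^2$-sense suffices; more precisely, choose $\alpha$ outside every prime in $\Ass_R M \cup \Ass_R M/xM \cup \Ass_R M/yM$ of coheight $d$). By Proposition \ref{P10}, $(x,y)$ remains a pair of exact zero--divisors on $M/\alpha M$, and $\dim_R M/\alpha M = d-1$, $\dim_R (M/xM)/\alpha(M/xM) = \dim_R M/xM - 1$, and similarly for $y$. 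By the induction hypothesis these three dimensions agree, which gives $\dim_R M/xM = \dim_R M/yM = d$ once we know they are at least $d-1+1$; the subtlety is the base case of this induction.

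The base case is exactly $d = 1$, and this is where Lemma \ref{N} enters and where the main work lies. Here $x$ is a parameter element of $M$ (if $\dim_R M/xM < 1$ we must derive a contradiction; but actually we want to show $\dim_R M/xM \ge 1$ is impossible to fail — let me restate). Suppose $d=1$. We know $x$ is not a parameter on $M$ is the bad case; but $M \ne xM$ and $0:_M x = yM \cong M/xM$. If $x$ were a parameter on $M$, then $\dim_R M/xM = 0$, and then $0 = \dim_R M/xM = \dim_R yM \le \dim_R M/yM$ forces $\dim_R M/yM$... this is getting circular, so instead: in dimension $1$, either $x$ is a parameter on $M$ or it is not. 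If $x$ is a parameter on $M$, write $y = ux$ is false in general — $y$ need not be a multiple of $x$. The cleaner route: since $M/xM \cong 0:_M x \subseteq M$ and $\dim_R M = 1$, if $x$ is a parameter on $M$ then $\ell(M/xM) < \infty$, hence $0:_M x = yM$ has finite length, hence $\Supp(M/yM) \supseteq \Supp M$ minus possibly nothing — in fact $yM$ having finite length together with $M \ne yM$ does not immediately give $\dim M/yM = 1$. So I would instead symmetrically assume $\dim_R M/xM = 0$ and $\dim_R M/yM$ arbitrary, and apply Lemma \ref{N} after reducing $y$ to a unit multiple of $x$: replace $M$ by $M/\fp$-type considerations or pass to the completion is overkill.

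Let me give the correct shape. In dimension one, suppose for contradiction that $x$ is \emph{not} a parameter element of $M$. Then $\Supp(0:_M x)$ contains a prime $\fp$ of coheight $1$, so $\dim_R(0:_M x) = 1$, i.e. $\dim_R M/xM = 1$ — but wait, that is what we want, not a contradiction. So the genuinely hard direction is: assuming $(x,y)$ is a pair of exact zero--divisors on $M$ with $\dim_R M = 1$, show $\dim_R M/xM = 1$, equivalently show $x$ is \emph{not} a parameter on $M$. Suppose it were. Apply the symmetric statement to $y$: we would need $y$ also a parameter (else $\dim_R M/yM = 1 = \dim_R 0:_M y$, but $0:_M y = xM$ has finite length since $x$ is a parameter — contradiction already). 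So both $x$ and $y$ are parameters on $M$; then I would choose a minimal prime $\fp$ of $M$ with $\dim R/\fp = 1$, localize, and arrange (after multiplying $y$ by a unit, using that the twin is unique up to units) to be in the situation $y = ux$ of Lemma \ref{N} — concretely, since modulo a minimal prime of top dimension one works over a one-dimensional domain where $x, y$ are both nonzero parameters, $y/x$ makes sense in the fraction field and, up to clearing denominators and the unit ambiguity, Lemma \ref{N}'s injectivity hypothesis $M/xM \xrightarrow{u} M/yM$ translates into the statement that $x \cdot(0:_M y) \subseteq$ ... which combined with $0:_M y = xM$ we can check. Then Lemma \ref{N} forces $y$ to be a parameter — consistent — but iterating/combining with the exact zero--divisor relations $xyM = 0$ yields $M = 0$, the contradiction. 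The main obstacle is precisely this last maneuver: correctly setting up the hypotheses of Lemma \ref{N} (the element $u$ with $y = ux$, and the injectivity of multiplication by $u$) from the exact zero--divisor data, which is the step I would mirror from the proof of \cite[Theorem 3.3]{AHS}.
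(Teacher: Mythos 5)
There is a genuine gap, and it sits exactly where you yourself flag it: the application of Lemma \ref{N} in the one--dimensional case is never actually set up, and the detour you take instead contains a false step. The correct instantiation is not to try to realize $y$ as a multiple of $x$ (you rightly note it need not be one), but to feed the \emph{product} $xy$ into the lemma: take the parameter element of Lemma \ref{N} to be $x$ and take $u=y$, so that the lemma's ``$y$'' is $xy$. The required injectivity of $M/xM\overset{y}\longrightarrow M/xyM=M$ is immediate from $0:_My=xM$, and the lemma then says $xy$ is a parameter element of $M$ --- absurd, since $xyM=0$ while $\dim_RM=1$. Your alternative route (``both $x$ and $y$ are parameters, then localize at a minimal prime and pass to the fraction field'') never reaches a contradiction, and on the way you assert that $0:_My=xM$ has finite length because $x$ is a parameter; that is false: $x$ being a parameter makes $M/xM$ (equivalently $yM=0:_Mx$) of finite length, whereas $xM\cong M/(0:_Mx)$ still has dimension $1$. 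Since you explicitly defer the decisive maneuver to ``the step I would mirror from the proof of [AHS]'', the base case is not proved.

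The reduction from $d>1$ to $d=1$ is also not sound as written. Proposition \ref{P10} requires $\alpha$ to be $M$--regular, not merely a parameter element obtained by avoiding the top--dimensional primes of $\Ass_RM$; if $\depth_RM=0$ (e.g.\ $\fm\in\Ass_RM$) no such $\alpha$ exists, and a non--regular parameter does not satisfy the hypotheses of that proposition. Even granting the descent, the induction only yields $\dim_RM/xM\geq d-1$ unless $\alpha$ is also known to be a parameter on $M/xM$, which cannot be arranged by prime avoidance when $\dim_RM/xM$ is exactly what is in question (if it were $0$, the relevant minimal prime is $\fm$ itself). The paper avoids all of this by localizing rather than cutting down: assuming $\dim_RM/xM<d$, it picks $\fq\in\Supp_RM$ with $\dim R/\fq=d$ (so $x\notin\fq$), a prime $\fp\supseteq\fq+Rx$ with $\h\fp/\fq=1$, checks $y\in\fq$, and observes that $(x/1,y/1)$ is a pair of exact zero--divisors on $M_\fp$ with $\dim_{R_\fp}M_\fp=1$ and $\dim_{R_\fp}M_\fp/xM_\fp=0$, contradicting the one--dimensional case. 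You would need to replace your induction step by some such localization argument.
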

\begin{proof} For $d=0$ we have nothing to prove.

We first consider the case $\dim_RM=1$. Assume contrarily that
$\dim_RM/xM=0$. Hence $x$ is a parameter element of $M$. As $(x,y)$
is a pair of exact zero-divisors on $M$, the map
$M/xM\longrightarrow M$ given by $m+xM \mapsto ym$ is injective. It
follows from Lemma \ref{N} that $xy$ is a parameter element of $M$
which is a contradiction because $xyM=0$.

Now let $d>1$ and assume  contrarily that $\dim_RM/xM<d$. Choose
$\fq\in\Supp_RM$ such that $\dim R/\fq=d$. Hence $x\notin\fq$ and
 $\dim_RM/xM= \dim R/(\fq+Rx)$.
There is a prime ideal $\fp$ such that $\fq+Rx\subseteq \fp$ and
$\dim_R M/xM= \dim R/\fp=d-1$ and so $\h\fp/\fq=1$. As $(x,y)$ is a
pair of exact zero--divisors on $M$, by the exact sequence
$$0\longrightarrow M/xM\longrightarrow M\longrightarrow M/yM\longrightarrow 0$$ we obtain $(M/yM)_{\fq}\cong M_{\fq}\neq 0$.
 Hence $y\in\fq\subset \fp$.
On the other hand, $(x/1,y/1)$ is a pair of
exact zero--divisors on $M_{\fp}$ with $\dim_{R_\fp}(M_\fp)=1$. As $\dim_{R_\fp}(M_\fp/xM_\fp)= 0$, by the previous case this is a
contradiction.
\end{proof}


\begin{cor}\label{C9} Let $R$ be a local ring and $M$ be a
finitely generated $R$--module. Let $x_1, \cdots, x_n$ be a sequence of exact zero--divisors on
$M$. Then $M$ is Cohen-Macaulay $R$--module if and only if $M/(x_1,\cdots,x_n)M$ is Cohen-Macaulay $R$--module.
\end{cor}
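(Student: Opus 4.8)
The plan is to induct on $n$, with the whole argument reducing to the single-element case, where one simply combines the two preceding results: Corollary \ref{C10} says that passing to the quotient by an exact zero--divisor does not change depth, and Proposition \ref{Pr} says it does not change dimension.

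First I would treat $n=1$. Since $x_1$ is an exact zero--divisor on $M$, Definition \ref{D5} provides a twin $y_1\in R$ so that $(x_1,y_1)$ is a pair of exact zero--divisors on $M$, and moreover $M\neq x_1M$, so that $M$ and $M/x_1M$ are both nonzero. Then Corollary \ref{C10} gives $\depth_RM=\depth_RM/x_1M$ and Proposition \ref{Pr} gives $\dim_RM=\dim_RM/x_1M$, whence
\[
\dim_RM-\depth_RM=\dim_RM/x_1M-\depth_RM/x_1M .
\]
The left-hand side is zero precisely when $M$ is Cohen-Macaulay and the right-hand side precisely when $M/x_1M$ is, so the two conditions are equivalent.

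For the inductive step, set $M_i=M/(x_1,\dots,x_i)M$ for $0\le i\le n$, so that $M_0=M$ and $M_i=M_{i-1}/x_iM_{i-1}$. By the definition of a sequence of exact zero--divisors, $x_i$ is an exact zero--divisor on $M_{i-1}$ for each $i$, so the case $n=1$ applied to $M_{i-1}$ tells us that $M_{i-1}$ is Cohen-Macaulay if and only if $M_i$ is. Composing these equivalences for $i=1,\dots,n$ yields that $M$ is Cohen-Macaulay if and only if $M/(x_1,\dots,x_n)M$ is.

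I do not expect a genuine obstacle here; the content is entirely carried by Corollary \ref{C10} and Proposition \ref{Pr}. The only things to watch are the bookkeeping: that a twin exists at each stage (needed in order to invoke those two results) and that each $M_i$ is nonzero (which is automatic, since $x_{i+1}$ being an exact zero--divisor on $M_i$ forces $M_i\neq x_{i+1}M_i$), so that the equality $\dim=\depth$ remains a valid criterion for Cohen-Macaulayness at every step.
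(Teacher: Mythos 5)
Your proof is correct and follows exactly the route the paper takes: the paper's own proof is the one-line remark that the claim follows by induction from Corollary \ref{C10} and Proposition \ref{Pr}, and your write-up simply fills in that induction and the $\dim=\depth$ criterion explicitly. Nothing further is needed.
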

\begin{proof} By induction it follows from Corollary \ref{C10} and Proposition \ref{Pr}.
\end{proof}


\begin{lem}\label{L3} Let $R$ be a ring, $(x,y)$ a pair of exact zero--divisors on $R$. Assume that $M$ is an $R$--module
and set $\overline{R}=R/(x)$. Consider the following statements.
\begin{itemize}
\item[(i)] $(x,y)$ is a pair of exact zero--divisors on $M$.
\item[(ii)] $\emph\Ext^i_R(\overline{R},M)=0$ for all $i>0$.
\item[(iii)]  $\emph\Tor^R_i(\overline{R},M)=0$ for all $i>0$.
\end{itemize}
Then \emph{(i)}$\Rightarrow$\emph{(ii)}$\Leftrightarrow$\emph{iii}. If $M\not=xM$ and $x\notin(0:_RM)$ then
the statements \emph{(i)},\emph{(ii)} and \emph{(iii)} are equivalent.
\end{lem}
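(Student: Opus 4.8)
The plan is to exploit the 2-periodic free resolution of $\overline{R}=R/(x)$ furnished by Definition \ref{D1}(ii): since $(x,y)$ is a pair of exact zero-divisors on $R$, the complex
$$\cdots\longrightarrow R\overset{y}\longrightarrow R\overset{x}\longrightarrow R\overset{y}\longrightarrow R\overset{x}\longrightarrow R\longrightarrow 0$$
is a free resolution of $\overline{R}$. Applying $\Hom_R(-,M)$ and $-\otimes_R M$ to this resolution, one sees immediately that for $i>0$ the groups $\Ext^i_R(\overline{R},M)$ and $\Tor^R_i(\overline{R},M)$ are computed as the cohomology, resp. homology, of the 2-periodic complexes
$$M\overset{x}\longrightarrow M\overset{y}\longrightarrow M\overset{x}\longrightarrow M\overset{y}\longrightarrow\cdots,\qquad \cdots\overset{y}\longrightarrow M\overset{x}\longrightarrow M\overset{y}\longrightarrow M\overset{x}\longrightarrow M.$$
Thus, for every $i>0$, vanishing of $\Ext^i_R(\overline{R},M)$ (for $i$ odd versus even) is equivalent to the two exactness conditions $(0:_Mx)=yM$ and $(0:_My)=xM$, and the same two conditions govern $\Tor^R_i(\overline{R},M)$. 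This already yields (ii)$\Leftrightarrow$(iii), since both are equivalent to the conjunction ``$(0:_Mx)=yM$ and $(0:_My)=xM$''; it also shows that (i) adds to (ii)/(iii) exactly the two conditions $M\neq xM$ and $x\notin(0:_RM)$, giving the final ``if'' clause of the statement.

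The only subtlety, and the main point to get right, is that the equalities $(0:_Mx)=yM$ and $(0:_My)=xM$ must be extracted correctly from the cohomology in each degree. At a spot $\cdots M\overset{x}\to M\overset{y}\to M\cdots$ the cohomology is $(0:_My)/xM$, and at a spot $\cdots M\overset{y}\to M\overset{x}\to M\cdots$ it is $(0:_Mx)/yM$; here one uses $yM\subseteq 0:_Mx$ and $xM\subseteq 0:_My$, which follow from $xy=0$ in $R$ (a consequence of $(0:_Rx)=(y)$). So the vanishing of \emph{all} positive $\Ext$ (equivalently $\Tor$) is precisely $xM=(0:_My)$ together with $yM=(0:_Mx)$, i.e. the defining conditions of a pair of exact zero-divisors on $M$ \emph{except} for the nondegeneracy requirements $M\neq xM$ and $x\notin(0:_RM)$.

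For (i)$\Rightarrow$(ii) no nondegeneracy hypothesis is needed: if $(x,y)$ is a pair of exact zero-divisors on $M$ then by definition $0:_Mx=yM$ and $0:_My=xM$, so the periodic complexes above are exact in positive degrees and (ii), (iii) follow. For the converse under the stated hypotheses, assume (ii) (equivalently (iii)); then $0:_Mx=yM$ and $0:_My=xM$ as shown, and adding the hypotheses $M\neq xM$ and $x\notin(0:_RM)$ we recover all of Definition \ref{D5}, so $(x,y)$ is a pair of exact zero-divisors on $M$. I do not anticipate a serious obstacle; the bookkeeping of which cohomology group appears in which degree is the one place to be careful, and the inclusions $xM\subseteq 0:_My$, $yM\subseteq 0:_Mx$ from $xy=0$ make this routine.
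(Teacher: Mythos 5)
Your proof is correct and follows essentially the same route as the paper: both compute $\Ext^i_R(\overline{R},M)$ and $\Tor^R_i(\overline{R},M)$ from the $2$-periodic free resolution $\cdots\overset{x}\to R\overset{y}\to R\overset{x}\to R\to\overline{R}\to 0$, identifying their vanishing with the two conditions $0:_Mx=yM$ and $0:_My=xM$. The paper merely states this in one line ("the second part is clear"), whereas you have written out the degree bookkeeping explicitly; there is no substantive difference.
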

\begin{proof}It follows from the exact sequence
$$\cdots\overset{x}\longrightarrow R\overset{y}\longrightarrow
R\overset{x}\longrightarrow R\longrightarrow\overline{R}\longrightarrow 0$$
that (ii) and (iii) are equivalent. The second part is clear.
\end{proof}

\begin{exam}\label{E1} \emph{If $(x,y)$ is a pair of exact zero--divisors on $R$, it
follows from Lemma \ref{L3} that $(x,y)$ is a pair of exact
zero--divisors on any injective,
projective or flat $R$--module $M$, whenever $M\neq xM$ and $x\notin 0:_RM$.}
\end{exam}


\begin{cor}\label{P4}\ Let $R$ be a Cohen-Macaulay local ring of
dimension $d$ with canonical module $\omega_R$. Let $x_1, \cdots,
x_n$ be a sequence of exact zero--divisors on $R$. Then $x_1, \cdots,
x_n$ is a sequence of exact zero--divisors on $\omega_R$. More
precisely, $R/(x_1, \cdots, x_n)$ is Cohen-Macaulay of dimension $d$
with the canonical module $\omega_R/(x_1, \cdots, x_n)\omega_R$.
\end{cor}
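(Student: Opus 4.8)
The plan is to induct on $n$, with essentially the whole burden carried by the case $n=1$. So suppose first that $(x,y)$ is a pair of exact zero--divisors on $R$; note $xy=0$ since $y\in 0:_Rx$. By Remark~\ref{R1}, $\depth_R R/(x)=\depth R=d$ and $\dim_R R/(x)=d$, so $R/(x)$ is a maximal Cohen--Macaulay $R$--module. The key input is the standard fact \cite[Theorem 3.3.10]{BH}: for a finitely generated Cohen--Macaulay module $N$ over the Cohen--Macaulay local ring $R$ with canonical module $\omega_R$, one has $\Ext^i_R(N,\omega_R)=0$ for $i\neq d-\dim N$, and $\Ext^{d-\dim N}_R(N,\omega_R)$ is a canonical module of $N$. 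Applying this with $N=R/(x)$ gives $\Ext^i_R(R/(x),\omega_R)=0$ for all $i>0$ and $\omega_{R/(x)}\cong\Hom_R(R/(x),\omega_R)=0:_{\omega_R}x$.

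Next I translate the $\Ext$--vanishing into the exact zero--divisor conditions of Definition~\ref{D5}. By Lemma~\ref{L3}, the vanishing of $\Ext^i_R(R/(x),\omega_R)$ for all $i>0$ is equivalent to the vanishing of $\Tor^R_i(R/(x),\omega_R)$ for all $i>0$. Tensoring the periodic free resolution
$$\cdots\overset{x}{\longrightarrow} R\overset{y}{\longrightarrow} R\overset{x}{\longrightarrow} R\longrightarrow R/(x)\longrightarrow 0$$
with $\omega_R$ and computing homology, this vanishing says exactly that $0:_{\omega_R}x=y\omega_R$ and $0:_{\omega_R}y=x\omega_R$ (the inclusions $y\omega_R\subseteq 0:_{\omega_R}x$ and $x\omega_R\subseteq 0:_{\omega_R}y$ being automatic from $xy=0$). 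Since $\omega_R\neq 0$ is finitely generated and $x,y\in\fm$, Nakayama's lemma gives $x\omega_R\neq\omega_R$; and if $x\omega_R=0$ then $y\omega_R=0:_{\omega_R}x=\omega_R$, again contradicting Nakayama, so $x\notin 0:_R\omega_R$. Hence $(x,y)$ is a pair of exact zero--divisors on $\omega_R$, and then Proposition~\ref{P1} yields $0:_{\omega_R}x\cong\omega_R/x\omega_R$; combining with the previous paragraph, $\omega_{R/(x)}\cong\omega_R/x\omega_R$. Together with $\dim R/(x)=d=\depth R/(x)$, this completes the case $n=1$.

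For the inductive step set $R'=R/(x_1)$. By the case $n=1$, $x_1$ is an exact zero--divisor on $\omega_R$, and $R'$ is Cohen--Macaulay of dimension $d$ with $\omega_{R'}\cong\omega_R/x_1\omega_R$. By definition $\overline{x_2},\dots,\overline{x_n}$ is a sequence of exact zero--divisors on $R'$, so the induction hypothesis applied to $R'$ shows that $\overline{x_2},\dots,\overline{x_n}$ is a sequence of exact zero--divisors on $\omega_{R'}\cong\omega_R/x_1\omega_R$, that $R'/(\overline{x_2},\dots,\overline{x_n})=R/(x_1,\dots,x_n)$ is Cohen--Macaulay of dimension $d$, and that its canonical module is $(\omega_R/x_1\omega_R)/(x_2,\dots,x_n)(\omega_R/x_1\omega_R)=\omega_R/(x_1,\dots,x_n)\omega_R$. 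Finally, $x_1$ being an exact zero--divisor on $\omega_R$ together with $\overline{x_2},\dots,\overline{x_n}$ being a sequence of exact zero--divisors on $\omega_R/x_1\omega_R$ is precisely the statement that $x_1,\dots,x_n$ is a sequence of exact zero--divisors on $\omega_R$.

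I expect the only genuinely delicate point to be the bookkeeping inside the case $n=1$: identifying $R/(x)$ as a maximal Cohen--Macaulay module, and then matching the homological vanishing of $\Ext$ (equivalently $\Tor$) with the ideal--theoretic equalities $0:_{\omega_R}x=y\omega_R$ and $0:_{\omega_R}y=x\omega_R$ of Definition~\ref{D5} --- in particular verifying the non--degeneracy conditions $\omega_R\neq x\omega_R$ and $x\notin 0:_R\omega_R$ without circularity, which the Nakayama argument above handles once the $\Tor$ vanishing is available.
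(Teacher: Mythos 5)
Your proposal is correct and follows essentially the same route as the paper: reduce to $n=1$, use Remark~\ref{R1} to see $R/(x)$ is maximal Cohen--Macaulay so that $\Ext^i_R(R/(x),\omega_R)=0$ for $i>0$ and $\Hom_R(R/(x),\omega_R)$ is the canonical module of $R/(x)$, invoke Lemma~\ref{L3} to conclude $(x,y)$ is a pair of exact zero--divisors on $\omega_R$, and identify $0:_{\omega_R}x\cong\omega_R/x\omega_R$ via Proposition~\ref{P1}. Your extra care in checking the non--degeneracy hypotheses $\omega_R\neq x\omega_R$ and $x\notin 0:_R\omega_R$ needed for the implication (iii)$\Rightarrow$(i) of Lemma~\ref{L3} is a detail the paper leaves implicit, but it is not a different argument.
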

\begin{proof}\ It is enough to prove the case $n=1$. Let $x$ be an exact zero--divisor on $R$.
Since $R/xR$ is maximal Cohen-Macaulay $R$--module by Remark \ref{R1}, we have
$\Ext^i_R(R/xR,\omega_R)=0$ for all $i>0$. Hence by Lemma \ref{L3},
$x$ is an exact zero--divisor on $\omega_R$. On the other hand,
$0:_{\omega_R}x\cong\omega_R/x\omega_R$ by Proposition \ref{P1}. As $0:_{\omega_R}x$
is the canonical module of $R/xR$, we are done.
\end{proof}


\begin{prop}\label{T4}\ Let $R$ be a ring and let $M$ be an
$R$--module. Let $x_1, \cdots, x_n$ be a sequence of exact
zero--divisors on both $R$ and $M$ with the same twins
$y_1,\cdots,y_n$, respectively. Set $\overline{R}=R/(x_1, \cdots,
x_n)$, $\overline{M}=M/(x_1, \cdots, x_n)M$ and let $N$ be an
$\overline{R}$--module. Then the following statements hold true.
\begin{itemize}
\item[(i)] $\emph\Ext^i_R(N,M)\cong\emph\Ext^i_{\overline{R}}(N,\overline{M})$ for all $i\geq
0$.
\item[(ii)] $\emph\Ext^i_R(M,N)\cong\emph\Ext^i_{\overline{R}}(\overline{M},N)$ for all $i\geq
0$.
\item[(iii)]  $\emph\Tor^R_i(M,N)\cong\emph\Tor^{\overline{R}}_i(\overline{M},N)$ for all $i\geq 0$.
\end{itemize}
\end{prop}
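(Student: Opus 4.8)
The plan is to reduce everything to the case $n=1$ by induction, using the filtration $R \supseteq (x_1) \supseteq \cdots$, and then for a single pair $(x,y)$ of exact zero--divisors to exploit the $2$-periodic free resolution of $\overline{R}=R/(x)$ together with Lemma \ref{L3}. Since $x_1,\dots,x_n$ is a sequence of exact zero--divisors on both $R$ and $M$ with common twins, setting $R'=R/(x_1)$, $M'=M/x_1 M$ we have that $\overline{x_2},\dots,\overline{x_n}$ is a sequence of exact zero--divisors on both $R'$ and $M'$ over $R'$, with common twins $\overline{y_2},\dots,\overline{y_n}$; moreover $\overline{R}=R'/(\overline{x_2},\dots,\overline{x_n})$ and $\overline{M}=M'/(\overline{x_2},\dots,\overline{x_n})M'$, and any $\overline{R}$--module $N$ is an $R'$--module. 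So once the case $n=1$ is established, the general case follows by composing the isomorphisms for $R \to R'$ with those for $R' \to \overline{R}$.

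For $n=1$, write $S=R/(x)$. The first step is to record, via Lemma \ref{L3} applied to the $R$--module $M$ (which satisfies $M \neq xM$ and $x\notin 0:_R M$ since $x$ is an exact zero--divisor on $M$), that $\Ext^i_R(S,M)=0$ and $\Tor^R_i(S,M)=0$ for all $i>0$; likewise $\Ext^i_R(S,R)=0=\Tor^R_i(S,R)$. In particular $\Hom_R(S,M)\cong 0:_M x \cong M/xM=\overline M$ by Proposition \ref{P1}, and similarly $\Hom_R(S,R)\cong 0:_R x=(y)\cong R/(y)$. For (i): take a projective resolution $P_\bullet \to N$ over $S$; since each $P_j$ is a direct summand of a free $S$-module and $\Ext^{>0}_R(S,M)=0$, a standard spectral-sequence / dimension-shifting argument gives $\Ext^i_R(P_j,M)=0$ for $i>0$ and $\Hom_R(P_j,M)\cong \Hom_S(P_j,\Hom_R(S,M))=\Hom_S(P_j,\overline M)$ by Hom--tensor adjunction. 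Hence $\mathbf R\Hom_R(N,M)\simeq \Hom_R(P_\bullet,M)\cong \Hom_S(P_\bullet,\overline M)$ computes $\Ext^i_S(N,\overline M)$, which is exactly (i). The same adjunction argument, now using $\Tor^R_{>0}(S,M)=0$ and $S\otimes_R M\cong \overline M$, gives $\mathbf R\mathrm{Hom}$-free: $N\otimes^{\mathbf L}_R M \simeq N\otimes^{\mathbf L}_S(S\otimes^{\mathbf L}_R M)\simeq N\otimes^{\mathbf L}_S \overline M$, which is (iii). For (ii), take instead a projective resolution $Q_\bullet\to M$ over $R$; because $x_1,\dots,x_n$ (here $x$) is a sequence of exact zero--divisors on $R$, one checks using Lemma \ref{L3}/Example \ref{E1} that $S\otimes_R Q_\bullet$ is a projective resolution of $\overline M$ over $S$ (the higher $\Tor^R_i(S,M)$ vanish), and then $\Hom_R(M,N)$ for an $S$-module $N$ satisfies $\Hom_R(Q_\bullet,N)\cong \Hom_S(S\otimes_R Q_\bullet,N)$ by adjunction, giving $\Ext^i_R(M,N)\cong \Ext^i_S(\overline M,N)$.

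The main obstacle I anticipate is justifying the change-of-rings isomorphisms at the level of complexes cleanly, i.e.\ verifying that $S\otimes_R Q_\bullet$ really is acyclic in positive degrees (resolution of $\overline M$) for part (ii), and dually that $\Hom_R(P_\bullet,M)$ has homology only in degree $0$ contributions matching $\Hom_S(P_\bullet,\overline M)$; both hinge precisely on the vanishing $\Tor^R_{>0}(S,M)=0=\Ext^{>0}_R(S,M)$ supplied by Lemma \ref{L3}, so the real content is organizing the two first-quadrant spectral sequences (Cartan--Eilenberg change of rings) and observing they degenerate. Once that bookkeeping is in place, the hypothesis of equal twins $y_i$ enters only through the need, in the inductive step, that the passage $R\rightsquigarrow R'=R/(x_1)$ be compatible with $M\rightsquigarrow M'$ simultaneously, which is exactly what Proposition \ref{P10}-type reasoning and Definition \ref{D5} guarantee.
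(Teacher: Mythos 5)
Your proposal is correct and follows essentially the same route as the paper: reduce to $n=1$ by induction, invoke Lemma \ref{L3} for the vanishing of $\Ext^{>0}_R(\overline R,M)$ and $\Tor^R_{>0}(\overline R,M)$, and conclude by Hom--tensor adjunction; parts (ii) and (iii) are argued exactly as in the paper via a free resolution of $M$ over $R$ whose reduction mod $x$ stays exact. The only (harmless) variation is in (i), where the paper takes an injective resolution of $M$ over $R$ and applies $\Hom_R(\overline R,-)$, while you take a projective resolution of $N$ over $\overline R$ and use its $\Hom_R(-,M)$-acyclicity — two degenerate forms of the same change-of-rings spectral sequence.
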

\begin{proof}\ We prove the case $n=1$ and for $n>1$ the result follows inductively.

(i) Let $0\longrightarrow M\longrightarrow I^\bullet$ be an injective
resolution of $M$. Since $(x,y)$ is a pair of exact zero--divisor on $R$ and on
$M$, by Lemma \ref{L3},
$0\longrightarrow\Hom_R(\overline{R},M)\longrightarrow\Hom_R(\overline{R},
I^\bullet)$ is exact and it is an injective resolution of $\Hom_R(\overline{R},M)\cong\overline{M}$ as
$\overline{R}$--module. Hence
$\Ext^i_{\overline{R}}(N,\overline{M})\cong\H^i(\Hom_{\overline{R}}(N,\Hom_R(
\overline{R},I^\bullet)))\cong
\H^i(\Hom_R(N\otimes_{\overline{R}}\overline{R},I^\bullet))\cong\H^i(\Hom_R
(N,I^\bullet))=\Ext^i_R(N,M)$.

(ii) By Lemma \ref{L3}, $\Tor^R_i(M,\overline{R})=0$ for all $i>0$.
Let $F_{\bullet}\longrightarrow M\longrightarrow 0$ be a free
resolution of $M$. Then
$F_{\bullet}\otimes_R\overline{R}\longrightarrow
M\otimes_R\overline{R}\longrightarrow 0$ is exact so that it is a
free resolution of $\overline{M}$ as $\overline{R}$--module. Hence
$\Ext^i_R(M,N)=\H^i(\Hom_R(F_{\bullet},N))\cong\H^i(\Hom_R(F_{\bullet},
\Hom_{\overline{R}}(\overline{R},N)))\cong
\H^i(\Hom_{\overline{R}}(F_{\bullet}\otimes_R\overline{R},N))=\Ext^i_{
\overline{R}}(\overline{M},N)$.

(iii) Similarly,
\[\begin{array}{rl}
\Tor^R_i(M,N)&= \H_i(F_{\bullet}\otimes_RN)\\
&\cong\H_i(F_{\bullet}\otimes_R(\overline{R}\otimes_{\overline{R}}
N))\\
&\cong\H_i((F_{\bullet}\otimes_R\overline{R})\otimes_{\overline{R}}N)\\&=
\Tor^{\overline{R}}_i(\overline{M},N).
\end{array}\]
\end{proof}


\begin{cor}\label{C4}\ Let $(R,\fm,k)$ be a local ring and let $M$ be a
finitely generated $R$--module. Assume that $x_1, \cdots, x_n$ is a sequence
of exact zero--divisors on both $R$ and $M$ with the same twins
$y_1,\cdots,y_n$, respectively. Then
$\emph\pd_R(M)=\emph\pd_{\overline{R}}(\overline{M})$ and
$\emph\id_R(M)=\emph\id_{\overline{R}}(\overline{M})$.
\end{cor}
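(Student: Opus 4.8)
The plan is to deduce everything directly from Proposition \ref{T4} together with the characterizations of projective and injective dimension via vanishing of $\Ext$ over a local ring. First I would record the standard facts: for a finitely generated module $M$ over a noetherian local ring $(R,\fm,k)$, one has $\pd_R(M)=\sup\{i:\Ext^i_R(M,k)\neq 0\}$ and, dually, $\id_R(M)=\sup\{i:\Ext^i_R(k,M)\neq 0\}$. Since $k$ is naturally an $\overline{R}=R/(x_1,\dots,x_n)$--module (the sequence $x_1,\dots,x_n$ consists of non-units, so $(x_1,\dots,x_n)\subseteq\fm$), I may apply Proposition \ref{T4} with $N=k$.

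For the projective dimension statement, apply part (ii) of Proposition \ref{T4} with $N=k$: it gives $\Ext^i_R(M,k)\cong\Ext^i_{\overline{R}}(\overline{M},k)$ for all $i\geq 0$. Taking the supremum over those $i$ with nonvanishing $\Ext$ yields $\pd_R(M)=\pd_{\overline{R}}(\overline{M})$ at once. For the injective dimension statement, apply part (i) of Proposition \ref{T4} with $N=k$: it gives $\Ext^i_R(k,M)\cong\Ext^i_{\overline{R}}(k,\overline{M})$ for all $i\geq 0$, and the same supremum argument gives $\id_R(M)=\id_{\overline{R}}(\overline{M})$. One small point to note is that $\Ext^i_{\overline{R}}(\overline{M},k)$ and $\Ext^i_{\overline{R}}(k,\overline{M})$ are computed over $\overline{R}$, and $k$ is the residue field of $\overline{R}$ as well as of $R$, so the characterizations of $\pd_{\overline{R}}$ and $\id_{\overline{R}}$ apply verbatim; also $\overline{M}$ is a finitely generated $\overline{R}$--module since $M$ is finitely generated over $R$.

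There is essentially no obstacle here; the content is entirely in Proposition \ref{T4}, and the remaining work is the routine translation through the $\Ext$-vanishing criteria. The only thing worth a sentence of care is the edge case $\overline{M}=0$ (which forces the identifications to read $\pd=\id=-\infty$ on both sides, consistently) and the observation that $\overline{M}$ really is nonzero when $M$ is, since by Proposition \ref{P1} one has $\overline{M}\cong 0:_M(x_1,\dots,x_n)$, and if $M\neq 0$ is finitely generated over the local ring $R$ then $0:_M(x_1,\dots,x_n)\neq 0$ because $(x_1,\dots,x_n)\subseteq\fm$; but since the statement is a clean isomorphism-of-invariants, even this can be left implicit.
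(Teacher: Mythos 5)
Your proof is correct and follows essentially the same route as the paper: apply Proposition \ref{T4} with $N=k$ and read off the homological dimensions from the vanishing of the resulting $\Ext$ (or $\Tor$) modules. The only (immaterial) difference is that you use part (ii) of Proposition \ref{T4}, i.e.\ $\Ext^i_R(M,k)\cong\Ext^i_{\overline{R}}(\overline{M},k)$, to compute projective dimension, whereas the paper uses part (iii), i.e.\ $\Tor^R_i(M,k)\cong\Tor^{\overline{R}}_i(\overline{M},k)$; both characterizations of $\pd$ over a local ring are standard and equivalent.
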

\begin{proof}\ By Proposition \ref{T4}, we have $\pd_R(M)=\Sup\{i\mid\Tor^R_i(M,k)\neq 0\}=
\Sup\{i\mid\Tor^{\overline{R}}_i(\overline{M},k)\neq
0\}=\pd_{\overline{R}}(\overline{M})$ and
$\id_R(M)=\Sup\{i\mid\Ext^i_R(k,M)\neq
0\}=\Sup\{i\mid\Ext^i_{\overline{R}}(k,\overline{M})\neq
0\}=\id_{\overline{R}}(\overline{M})$.
\end{proof}


\begin{cor}\label{L5}\ Let $f:R\longrightarrow S$ be a homomorphism of
rings such that $S$ is flat $R$--module via $f$. Let $x_1, \cdots,
x_n$ be a sequence of exact zero--divisors on $R$. If $f(x_1),
\cdots, f(x_n)$ are non-zero and non-unit elements of $S$, then $f(x_1), \cdots, f(x_n)$ is a
sequence of exact zero--divisors on $S$. Moreover $\overline{f}:\overline{R}\longrightarrow \overline{S}$
is flat ring homomorphism, where $\overline{(-)}=(-)/(x_1,\cdots,x_n)\overline{(-)}$.
\end{cor}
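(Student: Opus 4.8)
The plan is to argue by induction on $n$, with the case $n=1$ as the crux, and to dispose of the ``moreover'' assertion first, since it is essentially formal. Indeed, $(f(x_1),\dots,f(x_n))S$ is just the extension to $S$ of the ideal $(x_1,\dots,x_n)$ of $R$, so there is a canonical ring isomorphism $\overline{S}\cong S\otimes_R\overline{R}$; and a base change of the flat $R$--module $S$ along $R\to\overline{R}$ is a flat $\overline{R}$--module, so $\overline{f}$ is flat. This argument uses nothing about exact zero--divisors, so all of the content of the corollary is in showing that $f(x_1),\dots,f(x_n)$ is again a sequence of exact zero--divisors on $S$.

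For $n=1$, write $(x,y)$ for the given pair of exact zero--divisors on $R$, and use the elementary fact that for a flat $R$--module $M$ and $a\in R$ one has $0:_Ma=(0:_Ra)M$ (apply $-\otimes_RM$ to $0\to(0:_Ra)\to R\overset{a}\longrightarrow R$). Taking $M=S$ and remembering that $R$ acts on $S$ through $f$, this gives $0:_Sf(x)=(0:_Rx)S=(y)S=f(y)S$ and, symmetrically, $0:_Sf(y)=(0:_Ry)S=f(x)S$. Since $f(x)\neq 0$ forces $f(x)\notin 0:_SS=0$ and $f(x)$ being a non--unit forces $S\neq f(x)S$, these two equalities say precisely that $(f(x),f(y))$ is a pair of exact zero--divisors on $S$. (One could also quote Example \ref{E1}: flatness of $S$ over $R$ makes $(x,y)$ a pair of exact zero--divisors on $S$ viewed as an $R$--module, and rewriting the $R$--action through $f$ gives the assertion over $S$.)

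For the inductive step, set $\overline{R}_1=R/(x_1)$ and $\overline{S}_1=S/(f(x_1))\cong S\otimes_R\overline{R}_1$. By the case $n=1$, $f(x_1)$ is an exact zero--divisor on $S$ with twin $f(y_1)$, the base--changed map $\overline{f}_1\colon\overline{R}_1\to\overline{S}_1$ is again flat, and $\overline{x}_2,\dots,\overline{x}_n$ is, by Definition \ref{D5}, a sequence of exact zero--divisors on $\overline{R}_1$ with twins $\overline{y}_2,\dots,\overline{y}_n$; so the induction hypothesis applied to $\overline{f}_1$ finishes the proof, provided one knows that the images of $f(x_2),\dots,f(x_n)$ in the successive quotients of $S$ remain non--zero and non--units. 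I expect this to be the main obstacle. Flatness does give $0:_{\overline{S}_1}\overline{x}_2=(0:_{\overline{R}_1}\overline{x}_2)\overline{S}_1$ and the analogous descriptions at later stages, but on its own it does not prevent the image of some $f(x_i)$ from becoming zero, or a unit, in a later quotient. The natural way to secure the hypotheses is to assume $S$ faithfully flat over $R$: then each base change $\overline{f}_1,\overline{f}_2,\dots$ is faithfully flat, hence injective, so the images stay non--zero, and if some image of $\overline{x}_i$ generated the unit ideal in $\overline{S}_{i-1}$ then $(\overline{x}_i)=\overline{R}_{i-1}$, contradicting that $\overline{x}_i$ is a non--unit. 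Note that this is automatic when $f$ is a local homomorphism of local rings, since a sequence of exact zero--divisors always generates a proper ideal (the condition $M\neq x_iM$ in Definition \ref{D5}).
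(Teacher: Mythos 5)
Your treatment of the case $n=1$ and of the ``moreover'' clause is correct and essentially the paper's own: the paper gets $0:_Sf(x)=f(y)S$ and $0:_Sf(y)=f(x)S$ by combining flatness ($\Tor^R_i(R/(x),S)=0$) with Lemma \ref{L3}, which is the same computation as your identity $0:_Sa=(0:_Ra)S$; and it deduces flatness of $\overline{f}$ from Proposition \ref{T4}(iii), whereas your base-change argument $\overline{S}\cong S\otimes_R\overline{R}$ is cleaner and uses nothing about exact zero--divisors.

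The obstacle you flag in the inductive step is not a defect of your write-up but a genuine gap in the statement as printed: the paper's proof simply asserts ``it is enough to prove the case $n=1$,'' and that reduction fails under the stated hypotheses. Concretely, let $R=k[b,c]/(b^2c^2)$, $x_1=bc$ (an exact zero--divisor with twin $bc$, since $0:_Rbc=(bc)$) and $x_2=b$, which is an exact zero--divisor on $R/(bc)=k[b,c]/(bc)$ with twin $c$; so $x_1,x_2$ is a sequence of exact zero--divisors on $R$. Take $S=R_c=k[b,c,c^{-1}]/(b^2)$, a flat $R$--algebra in which $f(x_1)=bc$ and $f(x_2)=b$ are nonzero nilpotents, hence non-zero non-units. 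But $S/f(x_1)S=k[c,c^{-1}]$ and $b$ maps to $0$ there, so $f(x_1),f(x_2)$ is not a sequence of exact zero--divisors on $S$; the mechanism is exactly the one you predicted, namely $(x_1,y_2)S=(bc,c)S=S$ even though $(x_1,y_2)\neq R$. Your proposed repair is the right one: under faithful flatness (in particular for a flat local homomorphism of local rings, the case actually used elsewhere in the paper), $IS=S$ forces $I=R$, so the images of the $x_i$ and of their twins stay non-zero and non-unit in every successive quotient and your induction closes. So record the corollary with ``faithfully flat'' (or with the hypothesis that each $f(x_i)$ remains non-zero and non-unit modulo $(f(x_1),\dots,f(x_{i-1}))S$); with that change your proof is complete.
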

\begin{proof}\ It is enough to prove for the case $n=1$. We have $\Tor^R_i(R/(x_1),S)=0$
for all $i>0$. As $x_1$ is an exact zero--divisor on $R$, by Lemma
\ref{L3}, $x_1$ is an exact zero--divisor on $R$--module $S$ and so
that $f(x_1)$ is an exact zero--divisor on $S$. Note that by Proposition \ref{T4}, we have
$\Tor^{\overline{R}}_i(N,\overline{S})\cong \Tor^R_i(N,S)=0$ for all $i>0$ and all $\overline{R}$--module $N$.
Hence $\overline{S}$ is flat as $\overline{R}$--module.
\end{proof}


\begin{rem}\label{R2}\ \emph{It follows inductively
from Lemma \ref{L3} and Proposition \ref{T4}, that if $x_1, \cdots,
x_n$ is a sequence of exact zero--divisors on both $R$ and $M$ with
the same twins $y_1,\cdots,y_n$, respectively, then
$\Ext^i_R(\overline{R},M)=\Tor^R_i(M,\overline{R})=0$  for all
$i>0$, where $\overline{R}=R/(x_1, \cdots, x_n)$. Moreover one easily finds that $\gd_R\overline{R}=0$.}
\end{rem}


\begin{lem}\label{L4}\ Let $R$ be a ring and let $(x,y)$ be a pair
of exact zero--divisors on $R$. Assume that $0\longrightarrow
M_1\longrightarrow M_2\longrightarrow M_3\longrightarrow 0$ is an
exact sequence of $R$--modules such that $M_j\not=xM_j$ and $x\notin
0:_RM_j$ for all $ j= 1, 2, 3$. Then if $(x,y)$ is a pair of exact
zero--divisors on two out of three modules $M_1$, $M_2$ and $M_3$,
then it is a pair of exact zero--divisors on the third one. More
precisely the sequence $0\longrightarrow
\overline{M_1}\longrightarrow \overline{M_2}\longrightarrow
\overline{M_3}\longrightarrow 0$ of $\overline{R}$--modules is
exact, where $\overline{(-)}=-\otimes_R R/(x)$.
\end{lem}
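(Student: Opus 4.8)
\textbf{Proof proposal for Lemma \ref{L4}.}

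The plan is to translate the statement entirely into the language of Tor, using Lemma \ref{L3} as the dictionary. Since $M_j \neq xM_j$ and $x \notin 0:_RM_j$ for each $j$, Lemma \ref{L3} tells us that $(x,y)$ is a pair of exact zero--divisors on $M_j$ if and only if $\Tor^R_i(\overline{R}, M_j) = 0$ for all $i > 0$, where $\overline{R} = R/(x)$. So the first step is to apply $-\otimes_R \overline{R}$ to the short exact sequence $0 \to M_1 \to M_2 \to M_3 \to 0$ and look at the resulting long exact sequence of Tor:
\[
\cdots \to \Tor^R_{i+1}(\overline{R}, M_3) \to \Tor^R_i(\overline{R}, M_1) \to \Tor^R_i(\overline{R}, M_2) \to \Tor^R_i(\overline{R}, M_3) \to \cdots
\]
ending in $0 \to \overline{M_1}' \to \overline{M_2} \to \overline{M_3} \to 0$ in degree $0$, where I write $\overline{M_1}'$ for the image of $M_1 \otimes_R \overline{R}$ (a priori only a quotient of $\Tor^R_0$, i.e. of $\overline{M_1}$).

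The second step handles the three cases. If the vanishing holds for $M_1$ and $M_2$ (resp. $M_2$ and $M_3$), then every third term in the long exact sequence forces $\Tor^R_i(\overline{R}, M_3) = 0$ (resp. $\Tor^R_i(\overline{R}, M_1) = 0$) for all $i > 0$ by a direct chase, and Lemma \ref{L3} gives the conclusion for the third module. The only genuinely delicate case is when the vanishing is assumed for $M_1$ and $M_3$: then for $i > 0$ the portion $\Tor^R_{i+1}(\overline{R}, M_3) \to \Tor^R_i(\overline{R}, M_1) \to \Tor^R_i(\overline{R}, M_2) \to \Tor^R_i(\overline{R}, M_3)$ becomes $0 \to 0 \to \Tor^R_i(\overline{R}, M_2) \to 0$, so $\Tor^R_i(\overline{R}, M_2) = 0$ for all $i \geq 1$; but one must separately check that $\Tor^R_1(\overline{R}, M_2)$ vanishes — and this follows from the same segment with $i = 1$, since $\Tor^R_1(\overline{R}, M_1) = 0$ and $\Tor^R_1(\overline{R}, M_3) = 0$ pinch it to zero. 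This is the step I expect to require the most care, because one has to be sure the degree-one term is not left dangling next to the degree-zero part $0 \to \overline{M_1}' \to \overline{M_2} \to \overline{M_3} \to 0$.

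Once $\Tor^R_i(\overline{R}, M_j) = 0$ for all $i > 0$ and all three $j$, the long exact sequence degenerates: the connecting map $\Tor^R_1(\overline{R}, M_3) \to M_1 \otimes_R \overline{R}$ has zero source, hence $M_1 \otimes_R \overline{R} \to M_2 \otimes_R \overline{R}$ is injective and $\overline{M_1}' = \overline{M_1}$. Therefore $0 \to \overline{M_1} \to \overline{M_2} \to \overline{M_3} \to 0$ is exact, which is the ``more precisely'' assertion. Combining with Lemma \ref{L3} applied to the third module finishes the proof. I would present this as the single observation that, under the hypotheses $M_j \neq xM_j$, $x \notin 0:_RM_j$, the exact zero--divisor condition on $M_j$ is equivalent to the acyclicity of $M_j$ against the $2$--periodic free resolution $\cdots \xrightarrow{x} R \xrightarrow{y} R \xrightarrow{x} R \to \overline{R} \to 0$, and then everything is a formal consequence of the long exact sequence for Tor.
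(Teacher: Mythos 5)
There is a genuine gap, and it sits exactly where you did not expect it. Your reduction of the statement to the vanishing of $\Tor^R_i(\overline{R},M_j)$ for $i>0$ via Lemma \ref{L3} is the right dictionary, and two of your three cases are fine: when the hypothesis is on $M_2$ and $M_3$ the segment $\Tor^R_{i+1}(\overline{R},M_3)\to\Tor^R_i(\overline{R},M_1)\to\Tor^R_i(\overline{R},M_2)$ pinches $\Tor^R_i(\overline{R},M_1)$ to zero for all $i\geq 1$, and when it is on $M_1$ and $M_3$ the middle term $\Tor^R_i(\overline{R},M_2)$ is trapped between two zeros for all $i\geq 1$ — so the case you flagged as ``genuinely delicate'' is in fact the trivial one. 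The problem is the case where the hypothesis is on $M_1$ and $M_2$. In the Tor long exact sequence the connecting homomorphism lowers degree, so at $i=1$ the relevant segment is
$$0=\Tor^R_1(\overline{R},M_2)\longrightarrow\Tor^R_1(\overline{R},M_3)\longrightarrow M_1\otimes_R\overline{R}\longrightarrow M_2\otimes_R\overline{R},$$
which only identifies $\Tor^R_1(\overline{R},M_3)$ with $\Ker(\overline{M_1}\to\overline{M_2})$; that kernel is not known to vanish — indeed its vanishing is equivalent to the ``more precisely'' assertion you are trying to prove, so you cannot invoke it without circularity. Your ``direct chase'' therefore does not close in degree one, and this is precisely the ``degree-one term left dangling next to the degree-zero part'' that you worried about, only attached to the wrong case.

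The repair is short and is what the paper does: for the $M_1,M_2$ case apply $\Hom_R(\overline{R},-)$ instead and use the Ext long exact sequence, where the connecting map \emph{raises} degree, so $\Ext^i_R(\overline{R},M_2)\to\Ext^i_R(\overline{R},M_3)\to\Ext^{i+1}_R(\overline{R},M_1)$ pinches $\Ext^i_R(\overline{R},M_3)$ to zero for all $i\geq 1$. This is legitimate input because the equivalence (ii)$\Leftrightarrow$(iii) of Lemma \ref{L3} (both Ext and Tor against $\overline{R}$ are computed from the same $2$--periodic complex, with homology groups $(0:_Mx)/yM$ and $(0:_My)/xM$ in alternation) lets you pass freely between Ext--vanishing and Tor--vanishing. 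With that substitution your argument, including the derivation of the exactness of $0\to\overline{M_1}\to\overline{M_2}\to\overline{M_3}\to 0$ from $\Tor^R_1(\overline{R},M_3)=0$, is complete and coincides with the paper's proof.
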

\begin{proof}\ Assume that $(x,y)$ is a pair of exact zero--divisors on $M_1$ and $M_2$.
Applying the functor $\Hom_R(\overline{R},-)$ to the exact sequence
$0\longrightarrow M_1\longrightarrow M_2\longrightarrow
M_3\longrightarrow 0$ gives $\Ext^i_R(\overline{R},M_3)=0$ for all
$i>0$, by Lemma \ref{L3}. Hence the result follows from Lemma
\ref{L3}. If $(x,y)$ is a pair exact zero--divisors on $M_2$ and $M_3$, then
applying the functor $\overline{(-)}$ we get $\Tor^R_i(M_1,\overline{R})=0$ for all $i>0$. Again, by Lemma
\ref{L3}, the result follows. The remaining part is treated in the same way.
\end{proof}

Let $R$ be a local ring and let $M$ be a finitely generated $R$--module. Let $\mathbf{F}\longrightarrow M$
be a minimal free resolution of $M$. Assume that $(x,y)$ is a pair of exact zero-divisors on $R$. It follows from Lemma \ref{L4} that if
$(x,y)$ is a pair of exact zero-divisors on at least one syzygy module of $M$, then it is a pair of exact zero-divisors on
$M$ and on all the syzygy modules of $M$. Moreover in this case $\mathbf{F}/x\mathbf{F}\longrightarrow M/xM$
is a minimal free resolution
of $M/xM$ as $R/(x)$--module.


\begin{prop}\label{P7} Let $R$ be a ring and $(x,y)$ be a pair of exact zero--divisors on $R$. Let $M$ be an
$R$--module such that $M\not=xM$ and $x\notin(0:_R M)$ and set $\overline{(-)}=-\otimes_R R/(x)$.
Assume that at least one of $\id_R M$, $\pd_R M$, or $\fd_R M$ is finite. Then the following statements hold true.
\begin{itemize}
\item[(a)] $(x, y)$ is a pair of exact zero--divisors on $M$.
\item[(b)] $\emph\id_{\overline{R}}\overline{M}\leq \emph\id_RM$,
 $\emph \pd_{\overline{R}}\overline{M}\leq\emph\pd_RM$, or
$\emph\fd_{\overline{R}}\overline{M}\leq\emph\fd_RM$.
\item[(c)] If $R$ is local and $M$ is
finitely generated $R$--module, then each inequality in \emph{(b)}, if exists, is equality.
\end{itemize}
\end{prop}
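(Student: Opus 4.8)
The plan is to reduce everything to the case $n=1$ by an easy induction (using Corollary \ref{L5}'s observation that $\overline{R}\to\overline{S}$-type reductions behave well, and more directly the fact that a sequence of exact zero--divisors on $R$ is again a sequence on $\overline{R}$ in the obvious way), so I will concentrate on a single pair $(x,y)$ of exact zero--divisors on $R$ with $M\neq xM$ and $x\notin(0:_RM)$.

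\medskip

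First I would prove (a). The strategy is to use Lemma \ref{L3}: since $M\neq xM$ and $x\notin(0:_RM)$, it suffices to show $\Tor^R_i(\overline{R},M)=0$ for all $i>0$. I will do this according to which of the three finiteness hypotheses holds. If $\fd_RM<\infty$ (which covers the $\pd_RM<\infty$ case too), I would argue by induction on $\fd_RM$: when $\fd_RM=0$, $M$ is flat and $\Tor^R_i(\overline{R},M)=0$ for all $i>0$ trivially; for the inductive step, take a short exact sequence $0\to M'\to F\to M\to 0$ with $F$ flat, note $\fd_RM'=\fd_RM-1$, and use that $\Tor^R_i(\overline{R},F)=0$ for $i>0$ together with the long exact sequence and the inductive hypothesis on $M'$. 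If instead $\id_RM<\infty$, the cleanest route is to pass to the Ext side: by Lemma \ref{L3} it is equivalent to show $\Ext^i_R(\overline{R},M)=0$ for $i>0$, and here one induces on $\id_RM$ using a short exact sequence $0\to M\to I\to M''\to 0$ with $I$ injective, Example \ref{E1} giving $\Ext^i_R(\overline{R},I)=0$ for $i>0$, and the long exact sequence. One small point to check in the injective-resolution induction is that the hypotheses $M''\neq xM''$ and $x\notin(0:_RM'')$ (needed to invoke Lemma \ref{L3} on $M''$) can be arranged or are automatic; if they fail one can still run the vanishing argument directly since only the Ext/Tor vanishing, not the full equivalence, is used in the induction, and the conclusion (a) for $M$ itself only needs the vanishing plus the standing hypotheses on $M$.

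\medskip

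For (b), once (a) is in hand Remark \ref{R2} (applied with $M$ in place of the module there, which is legitimate by part (a)) gives $\Tor^R_i(\overline{R},M)=0$ and $\Ext^i_R(\overline{R},M)=0$ for all $i>0$. Then I invoke Proposition \ref{T4}: in the flat/projective case, if $F_\bullet\to M$ is a flat (resp.\ free) resolution, then $F_\bullet\otimes_R\overline{R}\to\overline{M}$ is a flat (resp.\ free) resolution over $\overline{R}$ of length $\le\fd_RM$ (resp.\ $\le\pd_RM$), giving the inequality. In the injective case, the argument in the proof of Proposition \ref{T4}(i) shows that applying $\Hom_R(\overline{R},-)$ to an injective resolution $0\to M\to I^\bullet$ of length $\id_RM$ yields an injective resolution of $\overline{M}$ over $\overline{R}$ of no greater length, so $\id_{\overline{R}}\overline{M}\le\id_RM$.

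\medskip

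For (c), assume $R$ local and $M$ finitely generated. Now $\overline{R}=R/(x)$ is again local with the same residue field $k$, and $\overline{M}$ is finitely generated over $\overline{R}$. The point is that here we can apply Proposition \ref{T4} with $N=k$: it gives $\Tor^R_i(M,k)\cong\Tor^{\overline{R}}_i(\overline{M},k)$ and $\Ext^i_R(k,M)\cong\Ext^i_{\overline{R}}(k,\overline{M})$ for all $i$, exactly as in the proof of Corollary \ref{C4}. Hence $\pd_RM=\sup\{i:\Tor^R_i(M,k)\neq0\}=\pd_{\overline{R}}\overline{M}$ and $\id_RM=\sup\{i:\Ext^i_R(k,M)\neq0\}=\id_{\overline{R}}\overline{M}$; the same indices detect flat dimension as projective dimension over a noetherian local ring for finitely generated modules, so $\fd_RM=\pd_RM=\pd_{\overline{R}}\overline{M}=\fd_{\overline{R}}\overline{M}$. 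I expect the only genuinely delicate point in the whole argument to be the bookkeeping in (a) for the injective-dimension case — making sure the short exact sequences used in the induction still satisfy the hypotheses required to apply Lemma \ref{L3}, or else phrasing the induction purely at the level of Ext-vanishing so that those hypotheses are not needed until the final step.
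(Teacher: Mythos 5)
Your overall architecture for this proposition matches the paper's: reduce (a) to the vanishing of $\Tor^R_i(\overline{R},M)$ or $\Ext^i_R(\overline{R},M)$ for $i>0$ via Lemma \ref{L3}, obtain (b) by pushing a resolution of $M$ down to $\overline{R}$ (as in Proposition \ref{T4}), and get (c) from Corollary \ref{C4} together with the coincidence of flat and projective dimension for finitely generated modules over a noetherian local ring. Parts (b) and (c) are fine, and your remark that the induction in (a) should be phrased purely as an Ext/Tor--vanishing statement (so that the nondegeneracy hypotheses on the intermediate syzygies and cosyzygies are not needed) is correct and is exactly how the paper's induction works. However, the inductive step in (a) has a genuine gap in homological degree $1$. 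In the flat case, the long exact sequence of $\Tor^R_\bullet(\overline{R},-)$ applied to $0\to M'\to F\to M\to 0$ yields $\Tor^R_i(\overline{R},M)\cong\Tor^R_{i-1}(\overline{R},M')$ only for $i\geq 2$; for $i=1$ it gives $\Tor^R_1(\overline{R},M)\cong\Ker(\overline{M'}\to\overline{F})$, and the inductive hypothesis $\Tor^R_{j}(\overline{R},M')=0$ for $j>0$ says nothing about that kernel. The same degree-one problem occurs in your Ext-side induction for the injective case: $\Ext^1_R(\overline{R},M)$ appears as a quotient of $\Hom_R(\overline{R},M'')$, which the induction does not control. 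So ``the long exact sequence plus the inductive hypothesis'' does not deliver the $i=1$ vanishing, and that is precisely the piece of information (a) needs, since $\Tor^R_1(\overline{R},M)\cong (0:_Mx)/yM$.

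Two one-line repairs are available. First, the free resolution $\cdots\longrightarrow R\overset{x}\longrightarrow R\overset{y}\longrightarrow R\overset{x}\longrightarrow R\longrightarrow\overline{R}\longrightarrow 0$ is $2$-periodic, so $\Tor^R_1(\overline{R},M)\cong\Tor^R_3(\overline{R},M)$ and $\Ext^1_R(\overline{R},M)\cong\Ext^3_R(\overline{R},M)$, and degree $3$ is already covered by your induction. Second, and this is what the paper actually does, use the equivalence (ii)$\Leftrightarrow$(iii) of Lemma \ref{L3} to switch functors so that $M$ sits at the favorable end of the short exact sequence: in the injective case convert $\Ext^{i}_R(\overline{R},I)=0=\Ext^{i}_R(\overline{R},M'')$ for $i>0$ into $\Tor^R_{i}(\overline{R},I)=0=\Tor^R_{i}(\overline{R},M'')$ and then read off $\Tor^R_i(\overline{R},M)=0$ from the exact sequence $\Tor^R_{i+1}(\overline{R},M'')\to\Tor^R_i(\overline{R},M)\to\Tor^R_i(\overline{R},I)$; dually, in the flat/projective case switch to $\Ext$, where $\Ext^i_R(\overline{R},M)$ is squeezed between $\Ext^i_R(\overline{R},F)$ and $\Ext^{i+1}_R(\overline{R},M')$. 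With either patch your argument goes through and is essentially the paper's proof.
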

\begin{proof} (a) and (b). We just prove in the case where $\id_R(M)<\infty$ and for the others the proof is similar.
First assume that $\id_R(M)=0$. Then Example \ref{E1} implies
that $(x,y)$ is a pair of exact zero--divisors on $M$. On the other hand,
Proposition \ref{T4} implies that $\Ext^i_{\overline{R}}(N,
\overline{M})=\Ext_R^i(N, M)= 0$ for all $\overline{R}$--module $N$
and all $i>0$. Thus $\overline{M}$ is injective
$\overline{R}$--module.

Let $\id_R(M)=1$. Consider an exact sequence
\begin{equation}\label{D1} 0\longrightarrow
M\longrightarrow I\longrightarrow M'\longrightarrow 0
\end{equation} such that $I$ and $M'$ are injective $R$--modules.
Hence we have
$\Ext^{i>0}_R(\overline{R},I)=0=\Ext^{i>0}_R(\overline{R},M')$.
 By Lemma \ref{L3},
$\Tor^R_{i>0}(\overline{R},I)=0=\Tor^R_{i>0}(\overline{R},M')$.
Hence $\Tor^R_{i>0}(\overline{R},M)=0$.  Again Lemma
\ref{L3} implies that $(x,y)$ is a pair of exact zero--divisors on
$M$. Now applying $\overline{(-)}$ on (\ref{D1}) implies the exact sequence
\begin{equation}\label{D2} 0\longrightarrow
\overline{M}\longrightarrow\overline{I}\longrightarrow\overline{M^{\prime}}
\longrightarrow 0.
 \end{equation} From (\ref{D1}) and (\ref{D2}) it follows that $(x, y)$ is also a pair of exact zero divisors on $I$ and so $\overline{I}$ is injective $\overline{R}$--module. Note that if $\overline{M'}\neq0$, then $\Hom_R(\overline{R}, M')\cong \overline{M'}$ so that $\overline{M'}$ is injective as $\overline{R}$--module. Thus $\id_{\overline{R}}\overline{M}\leq 1$. Now we proceed by induction.

If $R$ is local and $M$ is finitely generated, then the equality
holds by Corollary \ref{C4} and the fact that finitely generated flat modules coincide with projective modules.
\end{proof}

\begin{prop}\label{P12} Let $R$ be a local ring and let $M$ be a finitely generated $R$--module.
Assume $(x,y)$ is a pair of exact zero-divisors on both $R$ and $M$
and that $\emph\Ext^i_R(M,R)=0$ for all $i>0$. Then we have
$$\emph\Ext^n_R(M/xM,R)\cong\emph\Tor^R_n(R/(x),M^{*})$$ for all
$n\geq 0$, where $M^{*}=\emph\Hom_R(M,R)$.

In particular if $M$ is $\emph\g$--dimension zero $R$--module, then
$M/xM$ is $\emph\g$--dimension zero $R$--module\emph{(}$R/(x)$--module\emph{)}, if and only if $(x,y)$
also is a pair of exact zero-divisor on $M^{*}$.
\end{prop}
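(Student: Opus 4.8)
The plan is to exploit the periodic free resolution of $R/(x)$ coming from the pair $(x,y)$. First I would take a minimal free resolution $\mathbf{F}_\bullet \to M \to 0$ and dualize into $R$. Since $\Ext^i_R(M,R)=0$ for all $i>0$, the complex $\mathbf{F}_\bullet^{*}$ is acyclic in positive degrees, so it is a free resolution of $M^{*}=\Hom_R(M,R)$. Now I compute $\Ext^n_R(M/xM,R)$ two ways. On the one hand, by Lemma \ref{L3} and Proposition \ref{T4} applied with $\overline{R}=R/(x)$ (using that $(x,y)$ is a pair of exact zero--divisors on both $R$ and $M$), $\mathbf{F}_\bullet \otimes_R \overline{R}$ is a free resolution of $\overline{M}=M/xM$ over $\overline{R}$, and also $\Ext^n_R(M/xM,R)\cong\Ext^n_{\overline{R}}(\overline{M},\overline{R})=\H^n\big(\Hom_{\overline{R}}(\mathbf{F}_\bullet\otimes_R\overline{R},\overline{R})\big)$. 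On the other hand, by adjunction $\Hom_{\overline{R}}(\mathbf{F}_\bullet\otimes_R\overline{R},\overline{R})\cong\Hom_R(\mathbf{F}_\bullet,\overline{R})\cong\Hom_R(\mathbf{F}_\bullet,R)\otimes_R\overline{R}=\mathbf{F}_\bullet^{*}\otimes_R R/(x)$. Taking cohomology of the right-hand side gives exactly $\Tor^R_n(R/(x),M^{*})$, because $\mathbf{F}_\bullet^{*}$ is a free resolution of $M^{*}$. Stringing these isomorphisms together yields $\Ext^n_R(M/xM,R)\cong\Tor^R_n(R/(x),M^{*})$ for all $n\ge 0$, and for $n=0$ one checks directly that both sides equal $M^{*}/xM^{*}$.

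For the ``in particular'' part, suppose $M$ has $\g$--dimension zero. Then $\Ext^i_R(M,R)=0=\Ext^i_R(M^{*},R)$ for $i>0$ and $M\cong M^{**}$. By Corollary \ref{P4}-type reasoning, or rather directly from Remark \ref{R2} and Proposition \ref{T4}, $R/(x)$ has $\g$--dimension zero, so $\Ext^i_{\overline{R}}(\overline{M},\overline{R})=0$ for $i>0$ is equivalent (via the displayed isomorphism) to $\Tor^R_i(R/(x),M^{*})=0$ for $i>0$, which by Lemma \ref{L3} is equivalent to $(x,y)$ being a pair of exact zero--divisors on $M^{*}$ (one must also note $M^{*}\ne xM^{*}$ and $x\notin 0:_R M^{*}$, which follow since $M$, hence $M^{*}$, is nonzero and faithful modulo the relevant annihilator considerations). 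Conversely, if $(x,y)$ is a pair of exact zero--divisors on $M^{*}$, then by Proposition \ref{T4} applied to $M^{*}$ we get $\Ext^i_{\overline{R}}(\overline{M},\overline{R})\cong\Ext^i_R(M,R)=0$ for $i>0$; similarly, applying Proposition \ref{T4} to $M$ gives $\Ext^i_{\overline{R}}(\overline{M}^{*_{\overline R}},\overline{R})\cong\Ext^i_R(M^{*},R)=0$; and reflexivity of $\overline{M}$ over $\overline{R}$ follows from reflexivity of $M$ over $R$ together with $\overline{M^{*}}\cong(\overline{M})^{*_{\overline R}}$, which is a consequence of Proposition \ref{T4}(ii) or a direct adjunction argument. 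Hence $\overline{M}=M/xM$ has $\g$--dimension zero over $\overline{R}=R/(x)$, and then also over $R$ by a standard change-of-rings argument for $\g$--dimension (or by Corollary \ref{C4}).

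The main obstacle I anticipate is the bookkeeping of the two change-of-rings principles at once: one has to keep straight that $M^{*}$ computed over $R$ restricts compatibly to $\overline{M}^{*}$ computed over $\overline{R}$, i.e.\ that $\overline{M^{*}}\cong\Hom_{\overline{R}}(\overline{M},\overline{R})$, and that this isomorphism is the one induced by the resolution-level identity $\mathbf{F}_\bullet^{*}\otimes_R\overline{R}\cong\Hom_{\overline{R}}(\mathbf{F}_\bullet\otimes_R\overline{R},\overline{R})$. Once that compatibility is nailed down, the equivalence in the ``in particular'' clause is a formal consequence of the displayed $\Ext$--$\Tor$ isomorphism together with Lemma \ref{L3}. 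The verification of the small side conditions ($\overline{M}\ne 0$, $x\notin 0:_R M^{*}$, etc.) is routine and I would dispatch it in a sentence.
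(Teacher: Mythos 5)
Your reduction $\Ext^n_R(M/xM,R)\cong\Ext^n_{\overline{R}}(\overline{M},\overline{R})\cong \H^n\bigl(\Hom_R(\mathbf{F}_\bullet,\overline{R})\bigr)$ is fine, but the decisive step fails: $\mathbf{F}_\bullet^{*}$ is \emph{not} a free resolution of $M^{*}$. The vanishing $\Ext^i_R(M,R)=0$ for $i>0$ makes $0\to M^{*}\to F_0^{*}\to F_1^{*}\to\cdots$ exact, i.e.\ $\mathbf{F}_\bullet^{*}$ is a right (co)resolution of $M^{*}$ by free modules, not a complex of the form $\cdots\to P_1\to P_0\to M^{*}\to 0$. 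Tensoring a cochain complex of projectives that is unbounded in the ascending direction does not compute derived tensor products (over $k[X]/(X^2)$ the exact complex $\cdots\xrightarrow{x}R\xrightarrow{x}R\xrightarrow{x}\cdots$ tensored with $k$ has nonzero homology everywhere), so you cannot read off $\Tor^R_n(R/(x),M^{*})$ from $\H^n(\mathbf{F}_\bullet^{*}\otimes_R R/(x))$. What that cohomology actually computes, since each $F_i$ is finite free, is $\Ext^n_R(M,R/(x))$. Thus your argument as written establishes only the change-of-rings isomorphism $\Ext^n_R(M/xM,R)\cong\Ext^n_R(M,R/(x))$, and the passage from there to $\Tor^R_n(R/(x),M^{*})$ --- which is the entire content of the proposition --- is asserted rather than proved. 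The same unproved identification is what you lean on for the ``direct check'' at $n=0$: that $(M/xM)^{*}\cong M^{*}/xM^{*}$ is itself the $n=0$ case of the statement, not an obvious computation.

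To close the gap you need a balancing argument. The paper forms the double complex $\Hom_R(\Hom_R(F_\bullet,M),I^{\bullet})$, where $F_\bullet$ is the two-periodic free resolution of $R/(x)$ and $I^{\bullet}$ is an injective resolution of $R$; one spectral sequence collapses to $\Ext^j_R(M/xM,R)$ using $\Ext^{i>0}_R(R/(x),M)=0$ (Lemma \ref{L3}) and Proposition \ref{P1}, while the other, via $\Hom_R(\Hom_R(F_\bullet,M),I^{\bullet})\cong F_\bullet\otimes_R\Hom_R(M,I^{\bullet})$, collapses to $\Tor^R_i(R/(x),M^{*})$ using $\Ext^{j>0}_R(M,R)=0$. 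Note that the naive repair of your route --- totalizing $\mathbf{F}_\bullet^{*}\otimes_R G_\bullet$ with $G_\bullet$ the periodic resolution of $R/(x)$ --- runs into convergence trouble, since both directions are unbounded against each other in total degree; the paper's choice of double complex is first-quadrant (cohomologically) and avoids this. Your treatment of the ``in particular'' clause is essentially sound once the displayed isomorphism is in hand, modulo the same compatibility $\overline{M^{*}}\cong(\overline{M})^{*}$ that you flag, which again is the $n=0$ instance of the main isomorphism rather than an independent adjunction fact.
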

\begin{proof} Choose $F_{\bullet}:\cdots\longrightarrow R\overset{y}\longrightarrow R\overset{x}\longrightarrow
R\longrightarrow R/(x)\longrightarrow 0$,
 the minimal free resolution of $R$--module $R/(x)$, and let
$I^{\bullet}:0\longrightarrow R\longrightarrow I^{0}\longrightarrow
I^{1}\longrightarrow\cdots$ be an injective resolution of $R$.
Consider the third quadrant double complex
$X:\Hom_R(\Hom_R(F_{\bullet},M),I^{\bullet})$. Let $^{{\i}}\E$
(resp. $^{{\i}{\i}}\E$) denote the spectral sequence determined by
the first filtration(resp. second filtration) of $\Tot(X)$. Then we
have $^{{\i}}\E^{i,j}_2\cong \Ext^j_R(\Ext^i_R(R/(x),M),R)$. As
$(x,y)$ is a pair of exact zero-divisors on $M$, Lemma \ref{L3}
implies that $\Ext^i_R(R/(x),M)=0$ for all $i>0$. Hence by
Proposition \ref{P1},
$$^{{\i}}\E^{i,j}_2\cong \left\lbrace
           \begin{array}{c l}
              \Ext^j_R(M/xM,R)\ \ & \text{ \ \ $i=0$,}\\
              0\ \   & \text{   \ \ $\textrm{otherwise}$.}
           \end{array}
        \right.$$\\
Next for computing $^{{\i}{\i}}\E_2$, we use the functorial
isomorphism $\Hom_R(\Hom_R(F_{\bullet},M),I^{\bullet})\cong
F_{\bullet} \otimes_R\Hom_R(M,I^{\bullet})$ of double complexes.
Hence we have $^{{\i}{\i}}\E^{i,j}_2\cong
\Tor^R_i(R/(x),\Ext^j_R(M,R))$. As $\Ext^j_R(M,R)=0$ for all $j>0$, we have
$$^{{\i}{\i}}\E^{i,j}_2\cong \left\lbrace
           \begin{array}{c l}
              \Tor^R_i(R/(x),M^{*})\ \ & \text{ \ \ $j=0$,}\\
              0\ \   & \text{   \ \ $\textrm{otherwise}$.}
           \end{array}
        \right.$$\\
As the two spectral sequences $^{{\i}}\E$ and $^{{\i}{\i}}\E$
collapse, we have $^{{\i}}\E_{\infty} = {^{{\i}}\E_2}$ and
$^{{\i}{\i}}\E_{\infty} = {^{{\i}{\i}}\E_2}$. Thus one obtain the
isomorphisms $\Ext^n_R(M/xM,R)\cong\Tor^R_n(R/(x),M^{*})$ for all
$n\geq 0$.

For the next part, the fact that $\gd_R(M/xM)=0 $ implies that
$\Tor^R_i(R/(x),M^{*})=0$ for all $i>0$. Hence, by Lemma \ref{L3},
$(x,y)$ is a pair of exact zero-divisors on $M^{*}$. Conversely, let
$(x,y)$ be a pair of exact zero-divisors on $M^{*}$. By Lemma
\ref{L3}, $\Tor^R_n(R/(x),M^{*})=0$ for all $n>0$. Hence
$\Ext^n_R(M/xM,R)=0$ for all $n>0$. On the other hand, replacing $M$
with $M^{*}$, we have $\Ext^n_R(M^{*}/xM^{*},R)\cong
\Tor^R_n(R/(x),M^{**})\cong \Tor^R_i(R/(x),M)=0$ for all $n>0$. By
the first part, we have
$M^{*}/xM^{*}\cong\Hom_R(M/xM,R)=(M/xM)^{*}$. So we have
$\Ext^n_R((M/xM)^{*},R)=0$ for all $n>0$. Now it remains to show
that $M/xM$ is reflexive $R$--module. As $M$ is reflexive, the
natural map $M\longrightarrow M^{**}$ is an isomorphism. Hence
tensoring by $R/(x)$ gives the natural isomorphism $R/(x)\otimes_R
M\longrightarrow R/(x)\otimes_R M^{**}$ which implies
$M/xM\cong(M/xM)^{**}$.

Note that as by Proposition \ref{T4}, $\Ext^n_{R/(x)}(M/xM,R/(x))\cong\Ext^n_R(M/xM,R)$, with the same argument one can see
that $M/xM$ also is a $\g$--dimension zero $R/(x)$--module if and only if $(x,y)$ is a pair of exact zero-divisors
on $M^*$.
\end{proof}




\section{sequence of exact zero--divisors and complete intersection rings}

Throughout this section $R$ is a noetherian ring. We denote the number of a minimal generator of the finitely generated
$R$--module $M$ by $\mu_R(M)$. A sequence of elements $x_1, \cdots, x_n$ of $R$ is called {\it minimal} if $\mu(x_1, \cdots, x_n)=n$.

Let $R$ be a local ring and $I$ be an ideal of $R$ with a generating
set$\{a_1, \cdots, a_t\}$. Let $F$ be a free $R$--module of rank $t$
with the standard basis $\{e_1, \cdots, e_t\}$ and let
$f:F\longrightarrow R$ be an $R$--linear map such that $f(e_i)=a_i$.
Let $\K_{\bullet}(I)$ be the Koszul complex of $R$ with respect to
$a_1, \cdots, a_t$ and denote its homology modules by $\H_{*}(I)$.
In particular $\H_0(I)=R/I$. By \cite[Proposition 1.6.2]{BH},
$\K_{\bullet}(I)$ carries the structure of associated graded
$R$--algebra namely $\bigwedge^R F$ where $\bigwedge^R F$ is the
exterior algebra of the $R$--module $F$. By \cite[Proposition
1.6.4]{BH}, $\H_{*}(I)$ has a structure of graded associated
$R/I$--algebra inherited from $\K_{\bullet}(I)$ and there is a
unique homomorphism
$\lambda^{R/I}_*:\bigwedge^{R/I}_*\H_1(I)\longrightarrow\H_*(I)$ of
graded $R/I$--algebras with $\lambda^{R/I}_1$ is identity(see
\cite[Sections 1.6 and 2.3]{BH} for more details).

Now recall from
\cite{AHS} the notion of quasi-complete intersection ideal.
\begin{defn}\ \label{d2}\emph{An ideal $I$ of a local ring $R$ is called} quasi-complete intersection \emph{if
$\H_1(I)$ is a free $R/I$--module and the canonical homomorphism
of graded $R/I$--algebras
\centerline{$\lambda^{R/I}_*:\bigwedge^{R/I}_*{\H_1(I)}\longrightarrow\H_*(I)$} is
bijective.}
\end{defn}

As mentioned in \cite[1.6]{AHS}, a principal ideal is a
quasi-complete intersection if and only if it is generated by an
$R$--regular element or an exact zero--divisor. It follows by \cite[7.8]{AHS} and induction
that any sequence of exact zero-divisors is a quasi-complete intersection ideal. Also one can
conclude directly from the following result that any sequence of exact zero-divisors
is a quasi-complete intersection ideal.


\begin{prop}\label{T2} Let $R$ be a local ring and let $x_1, \cdots, x_n$
be non-zero and non-unit elements of $R$. Assume that
$\emph{K}_\bullet(x_1, \cdots, x_n)$ is the Koszul complex of $R$
with respect to the ideal generated by $x_1, \cdots, x_n$ and that
$\emph{H}_i(x_1, \cdots, x_n)$ denotes $i$th homology. Then the
following statements are equivalent.
\begin{itemize}
\item[(i)] $x_1, \cdots, x_n$ is a sequence of exact zero--divisors.
\item[(ii)] $x_p\notin(x_1,\cdots,x_{p-1})$ and $\emph{H}_i(x_1, \cdots, x_p)$ is a free
$R/(x_1, \cdots, x_p)$--module of rank $\large{(}^p_i\large{)}$
for all $p$, $1\leq p\leq n$, and all $i$, $0\leq i\leq p$.
\end{itemize}
\end{prop}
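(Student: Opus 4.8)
The plan is to prove the equivalence by induction on $n$, exploiting the behavior of Koszul complexes under the construction $R \rightsquigarrow R/(x_1)$ and the fact that for a single element $x$, being an exact zero--divisor is equivalent to the homology $\H_1(x)$ being a free $R/(x)$--module of rank one (and $\H_0(x) = R/(x)$), by Definition \ref{D1}(i) together with $\H_1(x) = (0:_R x)$. The key external tool will be Lemma \ref{L3} and the rigidity it provides: if $(x_1,y_1)$ is a pair of exact zero--divisors on $R$, then $\Tor^R_i(R/(x_1),-)$ vanishes on any module on which $(x_1,y_1)$ is a pair of exact zero--divisors, and more pointedly, tensoring a minimal free resolution with $R/(x_1)$ stays minimal (the remark after Lemma \ref{L4}). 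Also I will use the standard short exact sequence of Koszul complexes: writing $\K_\bullet(\underline{x}) = \K_\bullet(x_1,\dots,x_{n-1}) \otimes_R \K_\bullet(x_n)$, one gets the long exact sequence
$$\cdots \longrightarrow \H_i(x_1,\dots,x_{n-1}) \overset{\pm x_n}\longrightarrow \H_i(x_1,\dots,x_{n-1}) \longrightarrow \H_i(x_1,\dots,x_n) \longrightarrow \H_{i-1}(x_1,\dots,x_{n-1}) \overset{\pm x_n}\longrightarrow \cdots$$

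For the direction (i)$\Rightarrow$(ii), I would argue inductively. Suppose $x_1,\dots,x_n$ is a sequence of exact zero--divisors. Then $x_1,\dots,x_{n-1}$ is one on $R$ and, by Definition \ref{D5}, $x_n$ is an exact zero--divisor on $\overline{R} := R/(x_1,\dots,x_{n-1})$; in particular $x_n \notin (x_1,\dots,x_{n-1})$. By induction $\H_i(x_1,\dots,x_{n-1})$ is free over $R_{n-1} := R/(x_1,\dots,x_{n-1})$ of rank $\binom{n-1}{i}$ for all $i$. Now the crucial point: I claim the multiplication map $\pm x_n$ on $\H_i(x_1,\dots,x_{n-1}) \cong R_{n-1}^{\binom{n-1}{i}}$ is zero. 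This is because $x_n \in (x_1,\dots,x_{n-1})$ would be false, but $x_n$ acts on $R_{n-1}$, and $x_n \cdot R_{n-1}$ — no, more carefully: $\K_\bullet(x_1,\dots,x_{n-1})$ has homology already annihilated by $(x_1,\dots,x_{n-1})$ (standard), so $\H_i(x_1,\dots,x_{n-1})$ is an $R_{n-1}$-module, but multiplication by $x_n$ on it need not vanish. Instead I use: there is a minimal free resolution $\mathbf{F}$ of $\H_i(x_1,\dots,x_{n-1})$ over $R_{n-1}$ — since it is free, $\mathbf{F}$ is just $R_{n-1}^{\binom{n-1}{i}}$ in degree $0$ — and by the remark following Lemma \ref{L4}, if $(x_n,y_n)$ is a pair of exact zero--divisors on $R_{n-1}$ and on this module, tensoring with $R_{n-1}/(x_n)$ gives a minimal resolution, forcing the connecting map $x_n$ to be zero on $\H_i(x_1,\dots,x_{n-1})$ modulo nothing — i.e.\ $x_n$ annihilates $\H_i(x_1,\dots,x_{n-1})/(\text{nothing})$; since the module is $R_{n-1}$-free and $x_n$ acts on $R_{n-1}$ with $x_n \notin$ any associated prime forced to be a zero-divisor, actually the cleanest route is: $x_n \cdot \H_i(x_1,\dots,x_{n-1}) = 0$ because $\H_i$ is $R_{n-1}$-free and $x_n$ is an exact zero--divisor on $R_{n-1}$ only gives $(0:_{R_{n-1}} x_n) = y_n R_{n-1} \neq 0$, so multiplication by $x_n$ on $R_{n-1}$ is not injective — but I actually want it to be \emph{zero} on $\H_i$, which fails in general. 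So the honest mechanism is the long exact Koszul sequence combined with the fact (Proposition \ref{P1} and Lemma \ref{L3}) that $\Tor^{R_{n-1}}_j(R_{n-1}/(x_n), \H_i(x_1,\dots,x_{n-1})) = 0$ for $j > 0$ because $(x_n,y_n)$ is a pair of exact zero--divisors on the free module $\H_i(x_1,\dots,x_{n-1})$ (Example \ref{E1}), which makes the long exact sequence break into short exact sequences
$$0 \longrightarrow \H_i(x_1,\dots,x_{n-1})/x_n\H_i(x_1,\dots,x_{n-1}) \longrightarrow \H_i(x_1,\dots,x_n) \longrightarrow (0:_{\H_{i-1}(x_1,\dots,x_{n-1})} x_n) \longrightarrow 0;$$
then $\H_i(x_1,\dots,x_{n-1})/x_n\H_i = (R_{n-1}/(x_n))^{\binom{n-1}{i}} = R_n^{\binom{n-1}{i}}$ and $(0:_{\H_{i-1}(x_1,\dots,x_{n-1})} x_n) = (0:_{R_{n-1}} x_n)^{\binom{n-1}{i-1}} = (y_n R_{n-1})^{\binom{n-1}{i-1}} \cong R_n^{\binom{n-1}{i-1}}$ by Proposition \ref{P1}. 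Since $R_n$ is free over itself, the short exact sequence splits, giving $\H_i(x_1,\dots,x_n) \cong R_n^{\binom{n-1}{i} + \binom{n-1}{i-1}} = R_n^{\binom{n}{i}}$, which is exactly what (ii) asserts for $p = n$; for $p < n$ it is the inductive hypothesis.

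For (ii)$\Rightarrow$(i), again induct on $n$. Assuming (ii) holds for $x_1,\dots,x_n$, restricting to $p \le n-1$ gives (ii) for $x_1,\dots,x_{n-1}$, hence by induction $x_1,\dots,x_{n-1}$ is a sequence of exact zero--divisors; set $R_{n-1} = R/(x_1,\dots,x_{n-1})$. It remains to show $x_n$ is an exact zero--divisor on $R_{n-1}$. We are given $x_n \notin (x_1,\dots,x_{n-1})$, so $x_n$ is a nonzero non-unit of $R_{n-1}$. Since $\H_i(x_1,\dots,x_{n-1})$ is $R_{n-1}$-free, $\Tor^{R_{n-1}}_j(R_{n-1}/(x_n),\H_i(x_1,\dots,x_{n-1}))=0$ for $j>0$ (freeness), so the long exact Koszul sequence again breaks into short exact sequences, yielding $\ell$-counting (or rank-counting over the relevant localizations) identities; comparing $\H_1(x_1,\dots,x_n) \cong R_n^{n}$ with the short exact sequence
$$0 \longrightarrow \H_1(x_1,\dots,x_{n-1})/x_n\H_1(x_1,\dots,x_{n-1}) \longrightarrow \H_1(x_1,\dots,x_n) \longrightarrow (0:_{R_{n-1}} x_n) \longrightarrow 0$$
(using $\H_0(x_1,\dots,x_{n-1}) = R_{n-1}$) together with $\H_1(x_1,\dots,x_{n-1}) \cong R_{n-1}^{n-1}$, forces $(0:_{R_{n-1}} x_n)$ to be a free $R_n$-module of rank $1$, i.e.\ $(0:_{R_{n-1}} x_n) \cong R_{n-1}/(x_n)$, which is precisely Definition \ref{D1}(i): $x_n$ is an exact zero--divisor on $R_{n-1}$. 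A small point to be checked here is that "$(0:_{R_{n-1}} x_n)$ is a cyclic $R_{n-1}$-module annihilated by $x_n$ and free of rank one over $R_{n-1}/(x_n)$'' genuinely gives the isomorphism $(0:_{R_{n-1}} x_n) \cong R_{n-1}/(x_n)$ as $R_{n-1}$-modules — this follows since a surjection $R_{n-1}/(x_n) \twoheadrightarrow (0:_{R_{n-1}} x_n)$ exists by cyclicity and annihilation, and it is an isomorphism by the rank count over $R_n = R_{n-1}/(x_n)$.

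The main obstacle I anticipate is the bookkeeping in the long exact Koszul homology sequence: one must verify that the connecting maps behave correctly and that the relevant $\Tor$-vanishing (from Example \ref{E1}, applied to the free modules $\H_i(x_1,\dots,x_{n-1})$ over $R_{n-1}$, on which $(x_n,y_n)$ is automatically a pair of exact zero--divisors) indeed splits the sequence into the short exact pieces above, rather than merely giving rank equalities. A secondary subtlety is ensuring the splitting of those short exact sequences — this is where the freeness of $R_n$ over itself (the quotient on the right being $R_n$-free) is used, so that $\H_i(x_1,\dots,x_n)$ is not merely an extension but actually free of the predicted rank. Once the Koszul long exact sequence is set up and the $\Tor$-vanishing invoked, both implications reduce to rank arithmetic with the identity $\binom{n}{i} = \binom{n-1}{i} + \binom{n-1}{i-1}$.
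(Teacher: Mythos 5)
Your (i)$\Rightarrow$(ii) direction is essentially the paper's argument. The Koszul long exact sequence in homology breaks, by exactness alone, into the short exact sequences
\[
0\longrightarrow \H_i(x_1,\dots,x_{p-1})/x_p\H_i(x_1,\dots,x_{p-1})\longrightarrow \H_i(x_1,\dots,x_p)\longrightarrow 0:_{\H_{i-1}(x_1,\dots,x_{p-1})}x_p\longrightarrow 0,
\]
so your detour through $\Tor$-vanishing and minimal resolutions is unnecessary (and your first attempts to argue that multiplication by $x_n$ is zero on $\H_i$ were rightly abandoned); after that, freeness of the two outer terms over $R_p:=R/(x_1,\dots,x_p)$ splits the sequence and the binomial identity finishes it, exactly as in the paper.

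The genuine gap is in (ii)$\Rightarrow$(i). You work in homological degree $1$ and arrive at a short exact sequence of $R_n$-modules
\[
0\longrightarrow R_n^{\,n-1}\longrightarrow R_n^{\,n}\longrightarrow 0:_{R_{n-1}}x_n\longrightarrow 0,
\]
from which you want to conclude $0:_{R_{n-1}}x_n\cong R_n$. This does not follow from the shape of the sequence: the cokernel of an injection of free modules with rank drop one need not be free, nor even cyclic — for instance $0\to S\to S^2\to \fm_S\to 0$ over a two-dimensional regular local ring $S$ has exactly this form with $\mu(\fm_S)=2$. Your parenthetical fix (``a surjection $R_{n-1}/(x_n)\twoheadrightarrow 0:_{R_{n-1}}x_n$ exists by cyclicity and annihilation'') assumes the cyclicity that is the whole point: $0:_{R_{n-1}}x_n\cong R_{n-1}/(x_n)$ \emph{is} Definition \ref{D1}(i) for $x_n$ on $R_{n-1}$, so you are assuming the conclusion. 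The paper sidesteps this by taking $i=p$ (top homology) rather than $i=1$: since $\H_p(x_1,\dots,x_{p-1})=0$, the short exact sequence degenerates to an isomorphism $\H_p(x_1,\dots,x_p)\cong 0:_{\H_{p-1}(x_1,\dots,x_{p-1})}x_p\cong 0:_{R_{p-1}}x_p$, and hypothesis (ii) makes the left-hand side free of rank $\binom{p}{p}=1$ over $R_p$, i.e.\ isomorphic to $R_{p-1}/(x_p)$, which together with $x_p\notin(x_1,\dots,x_{p-1})$ is exactly the assertion that $x_p$ is an exact zero-divisor on $R_{p-1}$. With that single change of homological degree your argument closes up.
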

\begin{proof} For each $p$ and $i$ with $0\leq p\leq n$ and $0\leq i\leq p$, by \cite[Theorem 16.4]{M},
there is the long exact sequence
$$\cdots\overset{(-1)^ix_p}\longrightarrow\H_i(x_1, \cdots, x_{p-1})\longrightarrow\H_i(x_1, \cdots, x_p)\longrightarrow
\H_{i-1}(x_1, \cdots,
x_{p-1})\overset{(-1)^{i-1}x_p}\longrightarrow\cdots,$$ which gives
the short exact sequence
\begin{equation}\label{d1} 0\longrightarrow\H_i(x_1, \cdots, x_{p-1})/x_p
\H_i(x_1,\cdots, x_{p-1})\longrightarrow\H_i(x_1, \cdots, x_p)\longrightarrow
0:_{\H_{i-1}(x_1, \cdots, x_{p-1})}x_p\longrightarrow 0.
\end{equation}

(i)$\Rightarrow$(ii). We proceed by induction on $n$. For $n=1$ the
claim is clear. Let $n>1$ and $1\leq p\leq n$.  By induction
$\H_j(x_1, \cdots, x_{p-1})$ is a free $R/(x_1, \cdots,
x_{p-1})$--module of rank $(^{p-1}_j)$ for all $j=0,1, \cdots, p-1$.
Hence $\H_i(x_1, \cdots, x_{p-1})/x_p\H_i(x_1, \cdots, x_{p-1})$ is
a free $R/(x_1, \cdots, x_p)$--module of rank $(^{p-1}_i)$. On the
other hand, $0:_{\H_{i-1}(x_1, \cdots, x_{p-1})}x_p$ is a free
$R/(x_1, \cdots, x_p)$--module of rank $(^{p-1}_{i-1})$. Therefore
the short exact sequence \ref{d1} splits and so $\H_i(x_1, \cdots,
x_p)$ is a free $R/(x_1, \cdots, x_p)$--module of rank
$(^{p-1}_i)+(^{p-1}_{i-1})=(^p_i)$.

 (ii)$\Rightarrow$(i). For $n=1$ it is
clear. Let $n>1$ and $1\leq p\leq n $. Set $i=p$ in (\ref{d1}), we
have $\H_p(x_1, \cdots, x_p)\cong 0:_{ \H_{p-1}(x_1, \cdots,
x_{p-1})}x_p$. By assumptions, the right hand side is isomorphic to
$R/(x_1, \cdots, x_p)$, left hand side is isomorphic to $0:_{R/(x_1,
\cdots, x_{p-1})}x_p$ and $x_p\notin(x_1,\cdots,x_{p-1})$. Therefore
$x_p$ is an exact zero--divisor on $R/(x_1,\cdots,x_{p-1})$.
\end{proof}


\begin{prop}\label{T5} Let $(R,\fm)$ be a local ring and let $x_1, \cdots, x_n$ be a set of minimal
generators of $\fm$. Then the following statements are equivalent.
\begin{itemize}
\item[(i)] $x_1, \cdots, x_n$ is a sequence of exact zero--divisors.
\item[(ii)] $R/(x_1, \cdots, x_i)$ is an artinian complete intersection ring for
each $i$, $0\leq i\leq n$.
\end{itemize}
\end{prop}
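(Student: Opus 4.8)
The plan is to prove (i)$\Rightarrow$(ii) by induction on $n$, using the homological characterization of sequences of exact zero--divisors from Proposition \ref{T2}, and to prove (ii)$\Rightarrow$(i) by a careful bookkeeping of the Koszul homology of an artinian complete intersection. The key point in the forward direction is that when $x_1,\dots,x_n$ is a minimal generating set of $\fm$ and a sequence of exact zero--divisors, each quotient $R/(x_1,\dots,x_i)$ is again local with maximal ideal generated by the images of $x_{i+1},\dots,x_n$, and by Proposition \ref{T2} its Koszul homology $\H_*(x_{i+1},\dots,x_n)$ on this minimal generating set of the maximal ideal is free over the residue field with ranks the binomial coefficients $\binom{n-i}{j}$. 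Since $\H_*(\fm)$ is exactly $\Tor^{\bar R}_*(k,k)$ when $x_{i+1},\dots,x_n$ is a minimal generating set of the maximal ideal of $\bar R = R/(x_1,\dots,x_i)$, having total Koszul homology of rank $2^{n-i}$ with the exterior-algebra Betti pattern forces $\bar R$ to be a complete intersection by the classical theorem of Tate and Assmus (a local ring is a complete intersection if and only if its Koszul homology on a minimal generating system is the exterior algebra on $\H_1$, equivalently if the total Betti number $\dim_k \Tor^{\bar R}_*(k,k)$ realizes the complete intersection value). Artinianness of $R/(x_1,\dots,x_i)$ for $i<n$ follows because, by Corollary \ref{C10} and Proposition \ref{Pr}, $\dim R/(x_1,\dots,x_i) = \dim R$, and $R$ itself has dimension $0$ since its maximal ideal is generated by the sequence of exact zero--divisors $x_1,\dots,x_n$ (apply the dimension equality $n$ times down to $R/\fm = k$).

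For (ii)$\Rightarrow$(i), I would argue by induction on $n$, the case $n=1$ being immediate since a principal ideal quotient $R/(x_1)$ is an artinian complete intersection if and only if $x_1$ is a nonzerodivisor or an exact zero--divisor, and $x_1$ being part of a minimal generating set of $\fm$ on the artinian ring $R$ rules out the nonzerodivisor case (indeed $\fm$ consists of zerodivisors). For the inductive step, assume $R/(x_1,\dots,x_i)$ is an artinian complete intersection for all $i\le n$. Then $R/(x_1,\dots,x_{n-1})$ is an artinian complete intersection whose maximal ideal is minimally generated by the single element $\bar x_n$ (it is part of a minimal generating set of $\fm$), and $R/(x_1,\dots,x_n)=k$; hence $\bar x_n$ is an exact zero--divisor on $R/(x_1,\dots,x_{n-1})$ by the $n=1$ case. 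It remains to see that $x_1,\dots,x_{n-1}$ is itself a sequence of exact zero--divisors on $R$. This is where I would use Proposition \ref{T2} in the direction (ii)$\Rightarrow$(i): one shows inductively that for each $p\le n-1$, the ring $R/(x_1,\dots,x_p)$ is a $(n-p)$-generated artinian complete intersection, so its Koszul homology on the minimal generating set $\bar x_{p+1},\dots,\bar x_n$ of its maximal ideal is the exterior algebra $\bigwedge k^{\,n-p}$, giving $\H_j(x_1,\dots,x_p)$ free over $R/(x_1,\dots,x_p)$ of rank $\binom{p}{j}$ via the change-of-rings long exact sequence (\ref{d1}) run in the other order; together with $x_p\notin(x_1,\dots,x_{p-1})$ from minimality, Proposition \ref{T2}(ii) is satisfied and hence (i) holds.

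The main obstacle is the bookkeeping that connects the \emph{two different Koszul complexes} in play: the Koszul complex of $R$ on $x_1,\dots,x_p$ (whose homology $\H_*(x_1,\dots,x_p)$ is what Proposition \ref{T2} controls) versus the Koszul complex of the quotient ring $R/(x_1,\dots,x_i)$ on a minimal generating set of its maximal ideal (whose homology computes $\Tor$ over that quotient and detects the complete intersection property). The rigidity theorem I need is: a local ring $S$ with embedding dimension $e$ is a complete intersection if and only if $\dim_k \H_*(\text{minimal gens of }\fm_S)$ equals $2^e$, equivalently the Koszul homology algebra is $\bigwedge_k \H_1$. I would invoke this (Tate, Assmus; or via the multiplicativity of the Poincaré series) as a black box. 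A second, smaller subtlety is confirming that in case (ii) each intermediate ring really is $(n-i)$-generated — i.e. that the images $\bar x_{i+1},\dots,\bar x_n$ remain a \emph{minimal} generating set of the maximal ideal and do not collapse — but this is immediate from the hypothesis that $x_1,\dots,x_n$ is a minimal generating set of $\fm$, since $\mu(x_1,\dots,x_n)=n$ is preserved under the quotients in the obvious way. Once these two points are in place, both implications reduce to a routine induction feeding off Proposition \ref{T2} and the dimension/depth equalities of Section 2.
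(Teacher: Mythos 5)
Your forward direction is in substance the paper's argument: the paper observes that a sequence of exact zero--divisors generates a quasi-complete intersection ideal, so $\fm$ being such an ideal gives complete intersection by \cite[Theorem 2.3.11]{BH} (Assmus), and artinianness follows from the dimension equality of Proposition \ref{Pr}. However, your justification contains a false identification: $\H_*(\fm)$, the Koszul homology of $\bar R$ on a minimal generating set of its maximal ideal, is \emph{not} $\Tor^{\bar R}_*(k,k)$ --- the Koszul complex is only the initial piece of the minimal free resolution of $k$ (for $\bar R=k[x]/(x^2)$ the total Koszul homology has $k$-dimension $2$ while $\Tor^{\bar R}_*(k,k)$ is infinite-dimensional). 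The conclusion survives because Assmus' theorem is stated in terms of the Koszul homology algebra itself, or more simply because in the artinian case $\dim_k\H_1=n$ says that the defining ideal of $\bar R$ in a regular presentation of dimension $n$ is $n$-generated of height $n$, hence generated by a regular sequence. So this direction is repairable and not genuinely different from the paper's.

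The real gap is in (ii)$\Rightarrow$(i). Your plan is to verify condition (ii) of Proposition \ref{T2}, i.e.\ that $\H_j(x_1,\dots,x_p;R)$ is free over $R/(x_1,\dots,x_p)$ of rank $\binom{p}{j}$, by ``running the long exact sequence (\ref{d1}) in the other order'' starting from the fact that each $R/(x_1,\dots,x_i)$ is an artinian complete intersection. But (\ref{d1}) relates the Koszul homologies of $R$ on the \emph{initial} segments $x_1,\dots,x_{p-1}$ and $x_1,\dots,x_p$, whereas the complete intersection hypothesis controls a different object: the Koszul homology of the quotient ring $R/(x_1,\dots,x_i)$ on the \emph{residual} generators $\bar x_{i+1},\dots,\bar x_n$ of its maximal ideal. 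No reordering of (\ref{d1}) transfers the latter information into the freeness of $\H_j(x_1,\dots,x_p;R)$; that transfer is precisely the nontrivial content of the results the paper cites, namely \cite[7.5]{AHS} (if $R$ and $R/I$ are complete intersections then $I$ is quasi-complete intersection) together with \cite[1.6, 7.8]{AHS} (a principal quasi-complete intersection ideal is generated by a regular element or an exact zero--divisor). A related structural problem: your induction is set up backwards. The definition requires first that $x_1$ be an exact zero--divisor on $R$, a ring whose maximal ideal needs $n$ generators, so your base case (which assumes $\fm$ is principal) does not apply to that step. The clean induction is: $R$ and $R/(x_1)$ are artinian complete intersections, so $(x_1)$ is quasi-complete intersection, hence $x_1$ is regular or an exact zero--divisor, and regularity is impossible in an artinian ring; then pass to $R/(x_1)$ with minimal generators $\bar x_2,\dots,\bar x_n$ and repeat. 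Without the quoted results of \cite{AHS} (or an honest substitute for them), your argument for this implication does not close.
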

\begin{proof}\ (i)$\Rightarrow$(ii). As $\fm$ is a quasi--complete intersection ideal so by \cite[Theorem
2.3.11]{BH}, $R$ is complete intersection. As mentioned in Remark \ref{R1}, $\dim R=\dim R/(x_1, \cdots, x_n)=0$ and so $R$ is
artinian. The same argument holds true for the sequence $\overline{x_{i+1}}, \cdots, \overline{x_n}$ in the ring
$\overline{R}=R/(x_1, \cdots, x_i)$ and so $R/(x_1, \cdots, x_i)$ is a complete
intersection artinian ring for each $i$, $1\leq i\leq n$.\\

(ii)$\Rightarrow$(i). It follows inductively by \cite[7.5 and 7.8]{AHS}.
\end{proof}

\begin{cor}\label{C21} Let $(R,\fm)$ be an artinian local ring such that $\fm^{n+1}=0$ and $\fm^n\neq 0$.
If $R$ admits a sequence of exact zero-divisors $x_1, \cdots, x_n$ such that $\mu_R(x_1, \cdots, x_n)=n$, then $R$
is a complete intersection.
Moreover, assume that $x_1,\cdots,x_n$ is such a sequence with the twins $y_1,\cdots,y_n$,
respectively. Then $\fm=(x_1, \cdots, x_n)$ and $y_j\notin \fm^2$ for all $j$, $1\leq j\leq n$.
\end{cor}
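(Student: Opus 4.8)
The plan is induction on $n$. It is convenient to prove the formally stronger statement obtained by moving $\fm^n\neq 0$ from the hypotheses to the conclusions (so that the inductive hypothesis has a shape that can be fed back in): if $(R,\fm,k)$ is Artinian with $\fm^{n+1}=0$, and $x_1,\dots,x_n$ is a sequence of exact zero--divisors with twins $y_1,\dots,y_n$ and $\mu_R(x_1,\dots,x_n)=n$, then $\fm=(x_1,\dots,x_n)$, $R$ is a complete intersection, $\fm^n\neq 0$, and the image of $y_j$ in $R/(x_1,\dots,x_{j-1})$ lies outside the square of its maximal ideal for every $j$ (this is the natural reading of ``$y_j\notin\fm^2$'', and it implies $y_j\notin\fm^2$ in $R$). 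For $n=1$: from $\fm^2=0$ we get $\fm\subseteq 0:_Rx_1=(y_1)\subseteq\fm$ and symmetrically $\fm=(x_1)$, so $\fm=(x_1)=(y_1)$; Proposition \ref{T5} then gives that $R$ is a complete intersection, $\fm=\fm^1\neq 0$, and $y_1\neq 0=\fm^2$.

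For $n>1$ I would first establish $\fm=(x_1,\dots,x_n)$, which needs only $\fm^{n+1}=0$. Put $R_j=R/(x_1,\dots,x_j)$ with maximal ideal $\fm_j$; I claim $\fm_j^{\,n-j+1}=0$ by induction on $j$, the case $j=0$ being the hypothesis. If $\fm_j^{\,n-j+1}=0$, then $\fm_j^{\,n-j}\subseteq 0:_{R_j}\fm_j\subseteq 0:_{R_j}\overline{y_{j+1}}=(\overline{x_{j+1}})$, because $\overline{y_{j+1}}\in\fm_j$ and $(\overline{x_{j+1}},\overline{y_{j+1}})$ is a pair of exact zero--divisors on $R_j$; hence $\fm_{j+1}^{\,n-j}=0$. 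Taking $j=n$ gives $\fm_n=0$, i.e.\ $\fm=(x_1,\dots,x_n)$. Since $\mu_R(\fm)=n$, the $x_i$ form a minimal generating set of $\fm$, so $x_i\notin\fm^2$, and Proposition \ref{T5} gives that $R$ and every $R_i$ is an Artinian complete intersection. Also, by Proposition \ref{P1}, $0:_R\fm=0:_R(x_1,\dots,x_n)\cong R/(x_1,\dots,x_n)=k$, so $R$ is Gorenstein; write $s\le n$ for its socle degree, so that $0:_R\fm=\fm^s\neq 0=\fm^{s+1}$.

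Next I would set up the recursion on $R_1=R/(x_1)$. Since $x_1\notin\fm^2$, $R_1$ has embedding dimension $n-1$; a short colon computation using $\mu_R(x_1,\dots,x_n)=n$ gives $\mu_{R_1}(\overline{x_2},\dots,\overline{x_n})=n-1$; and the case $j=1$ above gives $\fm^n\subseteq(x_1)$, so $\fm_1^{\,n}=0$. Thus $R_1$ together with the sequence $\overline{x_2},\dots,\overline{x_n}$ falls under the inductive form of the statement for $n-1$, which yields: $R_1$ is a complete intersection, $\fm_1=(\overline{x_2},\dots,\overline{x_n})$, $\fm_1^{\,n-1}\neq 0$, and $\overline{y_j}$ is a minimal generator of the maximal ideal of $R_1/(\overline{x_2},\dots,\overline{x_{j-1}})=R_{j-1}$ for $j=2,\dots,n$ --- which is exactly the required statement for those $j$. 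Combining $\fm_1^{\,n}=0$ with $\fm_1^{\,n-1}\neq 0$, the socle degree of $R_1$ is exactly $n-1$, so ($R_1$ being Gorenstein) $0:_{R_1}\fm_1=\fm_1^{\,n-1}$ is the image of $\fm^{n-1}$ in $R_1$.

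Finally the case $j=1$. Since $(x_1,y_1)$ is a pair of exact zero--divisors on $R$, multiplication by $y_1$ is an $R$--isomorphism $\mu\colon R_1=R/(x_1)\overset{\sim}{\longrightarrow}0:_Rx_1=(y_1)$ (its kernel is $(0:_Ry_1)/(x_1)=(x_1)/(x_1)=0$), and one checks that $\mu$ carries $0:_{R_1}\fm_1$ onto $(0:_R\fm)\cap(y_1)$; since $0:_R\fm=\fm^s\subseteq 0:_Rx_1=(y_1)$, this image equals $\fm^s$. By the previous step $0:_{R_1}\fm_1$ is the image of $\fm^{n-1}$, and $\mu$ sends that image to $y_1\fm^{n-1}$; hence $y_1\fm^{n-1}=\fm^s\neq 0$. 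Taking $c\ge 1$ maximal with $y_1\in\fm^c$, we get $0\neq y_1\fm^{n-1}\subseteq\fm^{c+n-1}$, so $c+n-1\le s\le n$, which forces $c=1$ (so $y_1\notin\fm^2$) and $s=n$ (so $\fm^n\neq 0$). This closes the induction. The one point that needs care is showing that the socle degree of $R/(x_1)$ is \emph{exactly} $n-1$: the bound ``$\le n-1$'' is the elementary socle--drop of the second paragraph, whereas ``$\ge n-1$'' --- equivalently $\fm^{n-1}\not\subseteq(x_1)$, equivalently that an Artinian complete intersection of embedding dimension $m$ has $\fm^m\neq 0$ --- is precisely what drives the recursion; one could instead quote this last fact about complete intersections (socle degree $\ge\sum(\mathrm{ord}\,f_i-1)\ge m$), but routing it through the induction keeps the argument self-contained.
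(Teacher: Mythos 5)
Your proof is correct, and the first half coincides with the paper's: both establish $\fm=(x_1,\dots,x_n)$ by the iterated socle-degree drop $\fm_j^{\,n-j+1}=0\Rightarrow\fm_j^{\,n-j}\subseteq 0:_{R_j}\overline{y_{j+1}}=(\overline{x_{j+1}})$, and then invoke Proposition \ref{T5} for the complete intersection conclusion. Where you diverge is the claim $y_j\notin\fm^2$. The paper disposes of it by running the \emph{same} drop argument one more step: if $y_j\in\fm^2$, then $\fm_{j-1}^{\,n-j+1}\,\overline{y_j}\subseteq\fm_{j-1}^{\,n-j+3}=0$ gives an extra drop $\fm_{j-1}^{\,n-j+1}\subseteq(\overline{x_j})$, and iterating forces $\fm=(x_1,\dots,x_{n-1})$, contradicting $\mu_R(x_1,\dots,x_n)=n$ --- a two-line argument using nothing beyond what was already on the table. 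You instead strengthen the statement (making $\fm^n\neq 0$ and the sharper claim $\overline{y_j}\notin\fm_{j-1}^2$ into conclusions), close an induction through $R/(x_1)$, and handle $j=1$ by identifying $0:_{R_1}\fm_1$ with $\fm_1^{\,n-1}$ via Gorenstein duality and transporting it through the isomorphism $R/(x_1)\cong(y_1)$ to get $y_1\fm^{n-1}=\fm^s\neq 0$, whence $c+n-1\le s\le n$. This is heavier machinery (socle degrees, type-one/Gorenstein arguments, the colon isomorphism of Proposition \ref{P1}) but it buys genuinely more: it shows the hypothesis $\fm^n\neq 0$ is redundant, pins down the socle degree as exactly $n$, and yields the stronger normalization that each $y_j$ remains a minimal generator modulo $(x_1,\dots,x_{j-1})$, not merely $y_j\notin\fm^2$ in $R$. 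Both arguments are sound; yours is a legitimate, more informative alternative rather than a reconstruction of the paper's.
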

\begin{proof} First note that as $\fm^{n+1}=0$ we have $\fm^ny_1=0$. Hence $\fm^n\subseteq Rx_1$.
It follows that in the ring $R/(x_1)$, $n$th power of the maximal ideal is zero. Proceeding in this way we see
that, over $R/(x_1,\cdots,x_{n-1})$, square power of the maximal ideal is zero. But as $x_n$ is an exact zero-divisor
on $R/(x_1,\cdots,x_{n-1})$, one can see that $\fm/(x_1,\cdots,x_{n-1})=(x_1,\cdots,x_n)/(x_1,\cdots,x_{n-1})$.
Hence $\fm=(x_1,\cdots,x_n)$ and by Proposition \ref{T5}, $R$ is a complete intersection.

Note that if $y_1\in \fm^2$ then we will have $\fm^{n-1}\subseteq Rx_1$ and as seen in the last part,
we will have $\fm=(x_1,\cdots,x_{n-1})$, which is a contradiction. By the same way we can conclude
that $y_j\notin \fm^2$ for all $j$,  $1\leq j\leq n$.
\end{proof}


\begin{exam} \emph{Let $R=K[X]/(X^4)$ where $K$ is a field. Denote by $x$ the image of $X$ in $R$. Then $x^3,x^2,x$
is a sequence of exact zero-divisors on $R$ but it is not minimal.}
\end{exam}


 Let $(R,\fm,k)$ be a local ring and $M$ be a finitely generated
$R$--module. Denote by $M^g=\gr_R(M)$, the associated graded module of $M$ with respect to the maximal
ideal $\fm$. Note that if $F$ is free $R$--module, then $F^g$ is free $R^g$--module.
Let $\mathbf{F}\longrightarrow M$ be a minimal free resolution of $M$. Recall, from \cite{HI}, that $M$ is called a
 {\it Koszul} $R$--{\it module}
if the induced complex
$$\mathbf{F}^g:\cdots\longrightarrow F_n^g(-n)\longrightarrow F_{n-1}^g(-n+1)\longrightarrow \cdots \longrightarrow
F_0^g\longrightarrow 0$$ is acyclic. In other words $M$ is a Koszul $R$--module if and only if
$M^g$ has a linear resolution over $R^g$. The ring $R$ is called a {\it Koszul ring} if its residue field $k$ is Koszul
$R$--module.
See \cite{HI} and \cite[\S2]{S} for details.

\begin{prop}\label{P15} Let $(R,\fm,k)$ be an artinian local ring such that $\fm^{n+1}=0$ and
$\fm^n\neq 0$. Then the following statements hold true.
\begin{itemize}
\item[(i)] If $n=2$, then $R$ is Koszul complete intersection if and only if $R$ admits
a minimal sequence of exact zero-divisors of length $2$.
\item[(ii)] If $R$ is a standard graded $k$--algebra and admits a minimal sequence of exact zero-divisors
of length $n$, then $R$ is a Koszul complete intersection.
\item[(iii)] If $n=3$ and $R$ is a standard graded $k$--algebra with $k$ algebraically closed field of characteristic zero,
then $R$ is Koszul complete intersection if and only if $R$ admits
a minimal sequence of exact zero-divisors of length $3$.
\end{itemize}
\end{prop}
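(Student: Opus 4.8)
The plan is to prove the three statements together, leaning on Proposition \ref{T5} to convert a minimal sequence of exact zero--divisors of length $n$ into the statement that $R$ is an artinian complete intersection, and then to supply the Koszul part (and the converses) by separate arguments. For part (ii) I would argue as follows. If $R$ is standard graded with $\fm=(z_1,\dots,z_n)$ in degree $1$, and $x_1,\dots,x_n$ is a minimal sequence of exact zero--divisors, then by Corollary \ref{C21} we have $\fm=(x_1,\dots,x_n)$ and each $x_j\notin\fm^2$; after a linear change of coordinates we may take $x_j=z_j$. By Proposition \ref{T5}, $R/(x_1,\dots,x_i)$ is an artinian complete intersection for each $i$, so $R\cong k[z_1,\dots,z_n]/(f_1,\dots,f_n)$ with the $f_\ell$ a regular sequence in the polynomial ring. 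The point is that a minimal sequence of exact zero--divisors forces the $f_\ell$ to be \emph{quadrics}: indeed, by Proposition \ref{T2}, the Koszul homology $\H_i(x_1,\dots,x_n)$ is free over $R/(x_1,\dots,x_n)=k$ of rank $\binom{n}{i}$, i.e. $\dim_k\H_i=\binom{n}{i}$ for all $i$, which is exactly the Koszul homology of a quadratic complete intersection (equivalently, the $f_\ell$ generate an ideal whose minimal generators are in degree $2$, because extra generators or higher-degree generators would change the Poincaré series of $\H_*(\fm)$). A quadratic complete intersection is Koszul (Backelin--Fröberg, or \cite[\S2]{S}), so $R$ is Koszul complete intersection, finishing (ii).

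For the converse directions in (i) and (iii), I would use the classification of Koszul complete intersections of low socle degree. In case (i), $\fm^3=0$ and $R$ Koszul complete intersection means $R=k[z_1,z_2]/(q_1,q_2)$ with $q_1,q_2$ a regular sequence of quadrics (the number of variables must be $2$ since $R$ is a complete intersection with $\fm^3=0,\fm^2\ne0$, forcing embedding dimension $2$). One then checks directly, by choosing coordinates to diagonalize the pencil of quadrics, that $R$ admits a pair of exact zero--divisors among linear forms: e.g. for $R=k[z_1,z_2]/(z_1^2,z_2^2)$ the pair $(z_1,z_1)$ works after passing mod $z_2$—more precisely $z_1$ is an exact zero--divisor on $R$ with twin $z_1z_2$? — this needs care, and the honest statement is that $(z_1,z_2)$ itself is a minimal sequence of exact zero--divisors because $R/(z_1)=k[z_2]/(z_2^2)$ has $z_2$ as an exact zero--divisor. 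In general the pencil of quadrics $\langle q_1,q_2\rangle$ over an arbitrary field can be put (after completing the square) into a form from which one exhibits linear exact zero--divisors; the case $k$ not algebraically closed is why (i) is restricted to $n=2$, where the pencil of binary quadrics is simple enough. In case (iii), $\fm^4=0$, $R$ standard graded Koszul complete intersection over $k=\bar k$ of characteristic $0$: here one invokes the structure theory of quadratic complete intersections in $3$ variables, $R=k[z_1,z_2,z_3]/(q_1,q_2,q_3)$ with the $q_i$ a regular sequence of quadrics; the generic such net of conics can be simultaneously diagonalized (this uses $\mathrm{char}\,0$ and $k=\bar k$), giving $R\cong k[z_1,z_2,z_3]/(z_1^2,z_2^2,z_3^2)$ up to isomorphism, and then $z_1,z_2,z_3$ is visibly a minimal sequence of exact zero--divisors by iterating Proposition \ref{T5} (or by a direct syzygy computation). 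The forward direction of (i) and (iii) is just (ii) specialized.

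The main obstacle I expect is the converse of (iii): showing that \emph{every} standard graded Koszul complete intersection with $\fm^4=0$, $\fm^3\ne0$ over an algebraically closed field of characteristic zero is isomorphic to $k[z_1,z_2,z_3]/(z_1^2,z_2^2,z_3^2)$. This is really a statement about nets of conics in $\mathbb{P}^2$ whose base locus is empty (so that $q_1,q_2,q_3$ form a regular sequence): one must show such a net is $\mathrm{GL}_3$-equivalent to the net $\langle z_1^2,z_2^2,z_3^2\rangle$, which is where algebraic closedness and characteristic zero are genuinely used (simultaneous diagonalizability of the three symmetric matrices), and one has to rule out the non-generic nets (those containing a double line, or a pencil with extra structure) by checking that they fail to be complete intersections or fail to be Koszul. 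Parts (i) and (ii) are comparatively routine once Propositions \ref{T2}, \ref{T5} and Corollary \ref{C21} are in hand; (iii)'s converse is the part that forces the extra hypotheses and requires the classical projective-geometry input.
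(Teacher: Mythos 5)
Your reduction of the ``sequence $\Rightarrow$ Koszul complete intersection'' direction to Corollary \ref{C21} and Proposition \ref{T5} is the right start, but two of your key steps do not work. First, in (ii) the mechanism you offer for forcing the defining relations to be quadrics is not valid: for \emph{every} artinian complete intersection of embedding dimension $n$ the Koszul homology algebra on a minimal generating set of $\fm$ is the exterior algebra on $\H_1\cong k^n$, so $\dim_k\H_i(\fm)=\binom{n}{i}$ regardless of the degrees of the relations (compare $k[x]/(x^2)$ and $k[x]/(x^3)$); these ranks cannot detect quadraticity. What actually forces quadrics is the socle degree: a graded complete intersection $k[z_1,\dots,z_n]/(f_1,\dots,f_n)$ has socle degree $\sum(\deg f_i-1)\geq n$, with equality iff every $f_i$ is a quadric, and the hypotheses $\fm^{n+1}=0$, $\fm^n\neq 0$ give exactly this equality. (The paper instead ascends Koszulness along $R\to R/(x_1)\to\cdots$ via \cite[Corollary 1.12]{HS}, using that Corollary \ref{C21} places each $x_j$ and its twin $y_j$ outside $\fm^2$.) Note also that part (i) carries no gradedness hypothesis, so your pencil-of-binary-quadrics argument does not apply as stated; the paper's route is purely local: $\H_R(t)=(1+t)^2$ forces $\mu(\fm)=2$, and $R/(x_1)$ has principal maximal ideal, hence is a hypersurface, so Proposition \ref{T5} applies.

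The more serious gap is the converse of (iii), where your proposed classification is false: a base-point-free net of conics over an algebraically closed field of characteristic zero need \emph{not} be congruent to $\langle z_1^2,z_2^2,z_3^2\rangle$. The paper's own Example \ref{Ex1}, $R=k[X_1,X_2,X_3]/(X_1^2,\,X_2^2+X_1X_3,\,X_3^2)$, is a quadratic (hence Koszul) complete intersection with $\fm^4=0$, yet its discriminant cubic $\det(\lambda_1Q_1+\lambda_2Q_2+\lambda_3Q_3)=\lambda_2(\lambda_1\lambda_3-\lambda_2^2/4)$ is a line plus a smooth conic rather than three lines, so the three symmetric matrices are not simultaneously diagonalizable and $R$ is not isomorphic to $k[z_1,z_2,z_3]/(z_1^2,z_2^2,z_3^2)$. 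Since your argument produces the length-$3$ sequence only for the diagonal model, it says nothing about such rings. The paper avoids classification entirely: writing $R=Q/(0:_QF)$ for a cubic form $F$ via Macaulay inverse systems, it picks a nonzero $v\in V(F)$ (this is where $k=\bar{k}$ enters) and invokes the Hessian criterion of Maeno--Watanabe to see that the corresponding linear form $l$ has non-injective multiplication $R_1\to R_2$, yielding a linear form $l'$ with $ll'=0$; it then checks that $R/(l)$ is again a complete intersection, so $(0:_Rl)=(l')$ is principal and $(l,l')$ is an exact pair, and concludes by applying part (i) to the socle-degree-two Koszul complete intersection $R/(l)$. Some such pointwise construction, rather than a normal form for the net, is what the converse of (iii) genuinely requires.
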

\begin{proof} (i) Assume $R$ is a Koszul complete intersection. Then by \cite[Corollary 4.5]{HS},
$\H_R(t)=(1+t)^2$, where $\H_R(t)$ is the Hilbert series of $R$. It follows that $\mu_R(\fm)=2$. Assume
$\fm=(x_1,x_2)$. We show that $x_1,x_2$ is a sequence of exact zero-divisors. But as $R$, $R/(x_1)$ and $R/(x_1,x_2)$
are complete intersection, it follows by Proposition \ref{T5} that $x_1,x_2$ is a sequence of exact zero-divisors.

Conversely if $R$ admits a minimal sequence of exact zero-divisors of length $2$, then by Corollary \ref{C21},
R is complete intersection and $\mu_R(\fm)=2$. It follows that $\H_R(t)=(1+t)^2$ and so by \cite[Corollary 4.5]{HS},
R is Koszul.

(ii) It follows from Corollary \ref{C21} and \cite[Corollary 1.12]{HS}.

(iii) Let $Q=k[Y_1,Y_2,Y_3]$ and $S=k[X_1,X_2,X_3]$, where $Y_i$ and $X_i$ are indeterminates, $1\leq i\leq 3$.
$S$ has a structure of $Q$--module by multiplication $Y_iG:=\partial G/\partial X_i$,
$1\leq i\leq 3$ and $G\in S$. As $R$ is complete intersection, by \cite[Theorem 2.1]{MW}, $R\cong Q/(0:_QF)$, for
some homogeneous polynomial $F\in S$ of degree 3. So we may assume that $R= Q/(0:_QF)$.  As $k$ is algebraically closed,
then $V(F)=\{v\in k^3|F(v)=0\}\subset k^3$ is infinite set. Choose $v \in V(F)$ such that $v\neq 0$. Assume $v=(a_1,a_2,a_3)\in k^3$ and set
$l=a_1X_1+a_2X_2+a_3X_3\in Q$. Let $R=\oplus^{3}_{i=0}R_i$ where $R_0=k$ and consider the $k$--module map
$h:R_1\overset{l}\longrightarrow R_2$. As $F(a_1,a_2,a_3)=0$, it follows from \cite[Theorem 3.1]{MW} that
$h$ is not injective. So there is a linear form $l'\in R_1$ such that $l'\neq 0$ and $ll'=0$. We show that $(l,l')$ is a pair of exact zero-divisor on $R$.
As $ll'=0$ we have $Rl\subseteq(0:_Rl)$. Since $R$ is a Koszul complete intersection so $R=Q/(\alpha_1,\alpha_2,\alpha_3)$
where each $\alpha_i$ is quadric and $\alpha_1,\alpha_2,\alpha_3$ is $Q$--regular sequence. As $ll'=0$ in $R$ so $ll'=\Sigma^3_{i=1}\lambda_i\alpha_i$ in $Q$,
where $\lambda_i\in k$, $1\leq i\leq 3$. It follows that $(l,\alpha_1,\alpha_2,\alpha_3)=(l,\alpha_1,\alpha_2)$ and so that $R/(l)$ is a complete intersection.
Hence $(0:_Rl)$ is principal ideal and so that $(0:_Rl)=Rl'$. Now considering $R/(l)$, as by \cite[Corollary 1.12]{HS}, $R/(l)$ is Koszul complete
intersection, it follows by part (i) that $R/(l)$ admit a minimal sequence of exact zero-divisors of length $2$. Hence $R$ admits a minimal sequence
of exact zero-divisors of length $3$.
\end{proof}


\section{strong sequence of exact zero-divisors}
We define strong sequences of exact zero--divisors and establish
some conditions, under which, a sequence of exact zero--divisors is
a strong one. We also study local rings whose maximal ideals are
generated by a strong sequence of exact zero divisors.
\begin{defn} \emph{Assume that $x_1, \cdots, x_n\in R$ is a
sequence of exact zero-divisors on $R$ with the twins $y_1, \cdots,
y_n$, respectively. We introduce the following terminologies.}
\begin{itemize}
\item[(a)]\emph{ $x_1,\cdots,x_n$ is a \emph{permutable} sequence of exact
zero-divisors on $R$ if every permutation of it is again a sequence
of exact zero-divisors on $R$.}
\item[(b)]\emph{ $x_1,\cdots,x_n$ is an \emph{strong} sequence of exact zero-divisors on $R$,
if for any choice of distinct elements $i_1, \cdots, i_k$ of  $\{1, \cdots,
n\}$, $k<n$, and for each $j\in\{1,\cdots,n\}\backslash\{i_1, \cdots,
i_k\}$, $(x_j,y_j)$ is a pair of exact zero--divisors on $R/(x_{i_1}, \cdots, x_{i_k})$.}
\end{itemize}
\end{defn}


It is clear that any strong sequence of exact zero-divisors is permutable and any permutable sequence of exact
zero-divisors is minimal. Indeed assume $x_1,\cdots,x_n$ is a permutable sequence of exact zero-divisors.
If it is not minimal then there is a $j$, say $j=1$, such that $x_1\in(x_2, \cdots, x_n)$.
As $x_2, \cdots, x_n, x_1$ is also a sequence of
exact zero-divisors on $R$, $x_1$ is an exact zero-divisors on $R/(x_2, \cdots, x_n)$. But $\overline{x_1}=0$ in
$R/(x_2, \cdots, x_n)$ which is a contradiction.


\begin{exam}\label{Ex2} \emph{Let $K$ be a field and let $R=K[X_1, X_2]/({X_1}^2+{X_2}^2, X_1X_2)$. Denote by $x_1, x_2$ the images of
$X_1,X_2$ in $R$, respectively. Then one can check that $x_1, x_2$ is
a permutable sequence of exact zero-divisors on $R$ but it is not
strong. Indeed $x_1,x_2$ is a sequence of exact zero-divisors on $R$ with twins $x_2,x_2$
and $x_2,x_1$ is a sequence of exact zero-divisors on $R$ with the twins $x_1,x_1$.
So $x_1,x_2$ is not a strong sequence of exact zero-divisors.}
\end{exam}
In this section we study minimal and strong sequences of
exact zero--divisors on a local ring. It is clear that if $x_1,\cdots,x_n$ is a minimal(permutable, strong) sequence of
 exact zero--divisors on
$R$, then $x_1,\cdots,x_i$ is also a minimal(permutable, strong) sequence of exact
zero--divisors on $R$ and $x_{i+1},\cdots, x_n$ also is a minimal(permutable, strong)
sequence of exact zero--divisors on $R/(x_1,\cdots,x_i)$ for all $i$, $1\leq i\leq n$.


\begin{prop}\label{C7}\ Let $R$ be a local ring and let
$x_1, \cdots, x_n$ be non-zero and non-unit elements of $R$. Then
the following statements are equivalent.
\begin{itemize}
\item[(i)] $x_1, \cdots, x_n$ is a strong sequence of exact zero--divisors on
$R$.
\item[(ii)] For any choice of distinct elements $i_1, \cdots,i_k$ of  $\{1, \cdots,
n\}$ and for each $j\in\{1,\cdots,n\}\backslash\{i_1,\cdots,i_k\}$,
$x_j$ is an exact zero--divisor on $R$ and
$$\emph\Ext^i_R(R/(x_j),R/(x_{i_1}, \cdots, x_{i_k}))=0$$ for all
$i>0$.
\item[(iii)] For any choice of distinct elements $i_1, \cdots, i_k$ of  $\{1, \cdots,
n\}$ and for each $j\in\{1,\cdots,n\}\backslash\{i_1, \cdots,
i_k\}$, $x_j$ is an exact zero--divisor on $R$ and $$\emph\Tor^R_i(R/(x_{i_1}, \cdots,
 x_{i_k}),R/(x_j))=0$$ for all $i>0$.
\end{itemize}
\end{prop}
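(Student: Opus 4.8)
The plan is to prove the equivalences by first establishing (ii)$\Leftrightarrow$(iii), which is immediate from Lemma \ref{L3}: since each $x_j$ is an exact zero--divisor on $R$ (i.e.\ $(x_j,y_j)$ is a pair of exact zero--divisors on $R$), Lemma \ref{L3} applied with the module $M=R/(x_{i_1},\cdots,x_{i_k})$ and the pair $(x_j,y_j)$ gives that $\Ext^i_R(R/(x_j),M)=0$ for all $i>0$ is equivalent to $\Tor^R_i(R/(x_j),M)=0$ for all $i>0$. So (ii) and (iii) say the same thing.

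Next I would prove (i)$\Leftrightarrow$(ii). The key observation is the hypothesis in the second part of Lemma \ref{L3}: if $M\neq x_jM$ and $x_j\notin(0:_RM)$, then ``$(x_j,y_j)$ is a pair of exact zero--divisors on $M$'' is equivalent to the vanishing $\Ext^i_R(R/(x_j),M)=0$ for all $i>0$. So, taking $M=R/(x_{i_1},\cdots,x_{i_k})$, statement (i) — which demands that $(x_j,y_j)$ be a pair of exact zero--divisors on each such $M$ — will be equivalent to the Ext vanishing in (ii), \emph{provided} we can check the side conditions $M\neq x_jM$ and $x_j\notin(0:_RM)$ hold automatically. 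For the forward direction (i)$\Rightarrow$(ii) there is nothing to worry about: if $(x_j,y_j)$ is a pair of exact zero--divisors on $M$, then by Definition \ref{D5} automatically $M\neq x_jM$ and $x_j\notin(0:_RM)$, and Lemma \ref{L3} yields the vanishing. The content is in the converse.

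For (ii)$\Rightarrow$(i): fix distinct $i_1,\cdots,i_k$ and $j\notin\{i_1,\cdots,i_k\}$, and set $M=R/(x_{i_1},\cdots,x_{i_k})$. To invoke Lemma \ref{L3} and conclude that $(x_j,y_j)$ is a pair of exact zero--divisors on $M$, I must verify $M\neq x_jM$ and $x_j\notin 0:_RM$. The cleanest route is to argue by induction on $n$ (equivalently on $k$): reorder so that we first mod out by $x_{i_1},\cdots,x_{i_k}$ one at a time, using at each stage that by hypothesis $(x_{i_s},y_{i_s})$ is a pair of exact zero--divisors on $R/(x_{i_1},\cdots,x_{i_{s-1}})$ (this is the case of (ii) with the singleton index set complement, or rather with the $k$-element set being an initial segment), so that $x_{i_1},\cdots,x_{i_k}$ is an honest sequence of exact zero--divisors on $R$ and $M=R/(x_{i_1},\cdots,x_{i_k})$ is exactly the iterated quotient. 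Then Proposition \ref{P1} gives $0:_M(\text{stuff})\cong$ a cyclic module, keeping everything nonzero; in particular $M\neq 0$ and the noetherian/local structure forces $M\neq x_jM$ (Nakayama) and $x_j\notin 0:_RM$ (since $\Ext^0_R(R/(x_j),M)=0:_Mx_j$ and if $x_j$ killed $M$ then $\Ext^1_R(R/(x_j),M)$ would not vanish unless $M=0$ — one uses the periodic resolution of $R/(x_j)$ to see $\Ext^1_R(R/(x_j),M)\cong (0:_My_j)/x_jM$, which is nonzero when $x_jM=0\neq M$). With the side conditions in hand, Lemma \ref{L3} upgrades the Ext-vanishing to the statement that $(x_j,y_j)$ is a pair of exact zero--divisors on $M$, which is precisely (i).

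The main obstacle is exactly this bookkeeping: the definition of ``strong'' quantifies over \emph{arbitrary} subsets $\{i_1,\cdots,i_k\}$, not just initial segments, so to apply Lemma \ref{L3} I need to know a priori that $R/(x_{i_1},\cdots,x_{i_k})$ is itself obtained by an iterated exact-zero-divisor construction (so that Proposition \ref{P1} and the nonvanishing apply) — and this is not a formal consequence of (ii) at the level of a single fixed subset; it requires running the inductive hypothesis on the sub-sequence indexed by $\{i_1,\cdots,i_k\}$ in \emph{some} order, which (ii) does supply, one index at a time. Once that inductive scaffolding is set up, the Ext/Tor translations are routine via Lemma \ref{L3}. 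I expect the write-up to proceed: (a) reduce to $n$ arbitrary and do induction on the size $k$ of the chosen subset; (b) show the iterated quotient $R/(x_{i_1},\cdots,x_{i_k})$ is well-behaved (nonzero, $x_j$ acts as a genuine exact zero--divisor candidate); (c) close the loop with the two-way implication in Lemma \ref{L3}.
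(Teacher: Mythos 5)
Your proof is correct and takes essentially the same route as the paper's, which simply cites Lemma \ref{L3} in both directions; your write-up is in fact more careful than the paper about the side conditions $M\neq x_jM$ and $x_j\notin 0:_RM$ needed to invoke the converse part of that lemma (though note that $M=R/(x_{i_1},\cdots,x_{i_k})\neq 0$ is automatic because the $x_i$ are non-units, so Nakayama gives $M\neq x_jM$ without any inductive scaffolding). One small correction to your verification that $x_j\notin 0:_RM$: when $x_jM=0\neq M$ the group $\Ext^1_R(R/(x_j),M)\cong (0:_My_j)/x_jM$ need not be nonzero (the twin $y_j$ may act injectively on $M$, as for $M=R/(x_j)$ over $k[X,Y]/(XY)$), but $\Ext^2_R(R/(x_j),M)\cong (0:_Mx_j)/y_jM=M/y_jM$ is then nonzero by Nakayama, so the Ext-vanishing in (ii) still forces $x_jM\neq 0$ and your argument goes through.
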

\begin{proof} (i)$\Rightarrow$(ii) and (i)$\Rightarrow$(iii).
By assumption each $x_j$ is an exact zero--divisor on $R$.
For each $j$, denote $y_j$ as a corresponding twin of $x_j$. As by definition, $(x_j,y_j)$ also is a pair of
exact zero--divisors on $R/(x_{i_1},\cdots,x_{i_k})$ for any choice
of distinct elements $i_1, \cdots, i_k$ of $\{1, \cdots, n\}$ such
that $j\notin\{i_1, \cdots, i_k\}$, hence the results follows by
Lemma \ref{L3}.

(iii)$\Rightarrow$(i) and (ii)$\Rightarrow$(i). It follows from Lemma \ref{L3}.
\end{proof}

The following lemma is an easy exercise.


\begin{lem}\label{L7} Let $R$ be a ring, and let $M$, $N$, $K$ and
$L$ be $R$--modules. Consider a sequence
\begin{equation}\label{Eq1}M\overset{f}\longrightarrow K\oplus L\overset{g}\longrightarrow N
\end{equation} of $R$--homomorphisms such
that $\emph\Im f\subseteq K\times(0)\subseteq\emph\Ker g$. Then (\ref{Eq1}) is exact if and only if the sequences
$M\overset{f'}\longrightarrow K\longrightarrow 0$ and $0\longrightarrow
L\overset{g'}\longrightarrow N$ are exact, where $f'$ is the composition
$M\overset{f}\longrightarrow K\oplus L\overset{nat.}{\longrightarrow} K$ and  $g'$
is the composition $L\overset{nat.}{\longrightarrow} K\oplus L\overset{g}{\longrightarrow}N$.
\end{lem}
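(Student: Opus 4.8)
The plan is to prove Lemma \ref{L7} by a completely elementary diagram chase, treating the ``only if'' and ``if'' directions separately. Write $\pi_K, \pi_L$ for the two natural projections $K\oplus L\to K$ and $K\oplus L\to L$, and $\iota_K,\iota_L$ for the inclusions, so that $f' = \pi_K\circ f$ and $g' = g\circ\iota_L$. The hypothesis $\Im f\subseteq K\times(0)$ means $\pi_L\circ f = 0$, hence $f = \iota_K\circ f'$; and $\Im f\subseteq\Ker g$ together with $\Im f = \Im(\iota_K\circ f')$ being arbitrary in $K\times(0)$ is \emph{not} quite enough, so I should instead use the literal hypothesis that $K\times(0)\subseteq\Ker g$, i.e. $g\circ\iota_K = 0$, hence $g = g'\circ\pi_L$. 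These two identities $f=\iota_K f'$ and $g = g'\pi_L$ are the only structural facts needed, and everything reduces to them.

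For the ``if'' direction, assume $f'$ is surjective and $g'$ is injective. First, $\Im f = \Im(\iota_K f') = \iota_K(\Im f') = \iota_K(K) = K\times(0)$ since $f'$ is onto; and $\Ker g = \Ker(g'\pi_L) = \pi_L^{-1}(\Ker g') = \pi_L^{-1}(0) = K\times(0)$ since $g'$ is injective. Thus $\Im f = K\times(0) = \Ker g$, which is exactly exactness of (\ref{Eq1}) at $K\oplus L$. For the ``only if'' direction, assume (\ref{Eq1}) is exact, i.e. $\Im f = \Ker g$. Since by hypothesis $\Im f\subseteq K\times(0)\subseteq\Ker g$ and the two ends coincide, we get $\Im f = K\times(0) = \Ker g$. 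From $\Im f = K\times(0)$ and $f = \iota_K f'$ we read off $\iota_K(\Im f') = K\times(0)$, and since $\iota_K$ is injective, $\Im f' = K$, i.e. $f'$ is surjective, so $M\xrightarrow{f'}K\to 0$ is exact. From $\Ker g = K\times(0)$ and $g = g'\pi_L$ we get $\pi_L^{-1}(\Ker g') = K\times(0) = \pi_L^{-1}(0)$; applying $\pi_L$ (which is surjective) gives $\Ker g' = 0$, so $0\to L\xrightarrow{g'}N$ is exact.

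There is essentially no obstacle here: the lemma is genuinely ``an easy exercise'' as the authors say, and the only thing to be careful about is to use the \emph{full} pair of inclusions $\Im f\subseteq K\times(0)\subseteq\Ker g$ rather than just $\Im f\subseteq\Ker g$, since it is the sandwiching that forces $\Im f = K\times(0) = \Ker g$ once (\ref{Eq1}) is exact. One could alternatively phrase the whole argument element-wise (chase an element $m\in M$, an element $(a,b)\in K\oplus L$ with $g(a,b)=0$, etc.), but the image/kernel computation above is cleaner and avoids any case analysis. No use of the module structure beyond additivity is needed, so the statement in fact holds in any abelian category, though I will not belabor that point.
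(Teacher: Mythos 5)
Your proof is correct and complete; the paper states this lemma without proof ("an easy exercise"), and your argument via the identities $f=\iota_K f'$ and $g=g'\pi_L$, reducing everything to $\Im f = K\times(0) = \Ker g$, is exactly the standard verification one would supply. Nothing is missing.
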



\begin{prop}\label{P8}\ Let $R$ be a local ring and $x_1,\cdots,x_n$
elements in $R$. Then $x_1,\cdots,x_n$ is a strong
sequence of exact zero-divisors if and only if it is minimal and there exist twins $y_1,\cdots,y_n$ of $x_1,\cdots,x_n$, respectively, such
that $x_jy_j=0$, $1\leq j\leq n$.
\end{prop}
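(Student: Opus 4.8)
The forward implication is quick, so I would dispose of it first: if $x_1,\dots ,x_n$ is a strong sequence of exact zero--divisors then, as noted right after the definition, it is permutable and hence minimal; and the case of the empty subset $\{i_1,\dots ,i_k\}=\emptyset$ in the definition of ``strong'' says precisely that $(x_j,y_j)$ is a pair of exact zero--divisors on $R$ itself, so $x_jy_j=0$ in $R$ for every $j$. Thus the same twins $y_1,\dots ,y_n$ witness minimality together with the product condition.

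For the converse I would prove the stronger statement that, under the hypotheses, \emph{every} simultaneous permutation of $(x_1,\dots ,x_n;y_1,\dots ,y_n)$ is again a minimal sequence of exact zero--divisors with $x_jy_j=0$. Granting this, given $i_1,\dots ,i_k$ and $j$ as in the definition, one reorders the tuple so that $x_{i_1},\dots ,x_{i_k},x_j$ come first; the permuted tuple being a sequence of exact zero--divisors says exactly that $(x_j,y_j)$ is a pair of exact zero--divisors on $R/(x_{i_1},\dots ,x_{i_k})$, which is the strong condition (and the case $k=0$ shows each $x_j$ is an exact zero--divisor on $R$). Since every permutation is a product of adjacent transpositions, and since passing to an initial segment preserves all the hypotheses (a subset of a minimal generating set is again a minimal generating set), everything reduces --- after factoring out the first $l-1$ entries and writing $S=R/(x_1,\dots ,x_{l-1})$ --- to the following two--element assertion.

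\emph{Crux.} Let $S$ be local, $(z,w)$ a pair of exact zero--divisors on $S$, and let $z',w'\in S$ with $z'w'=0$ in $S$ be such that $(\overline{z'},\overline{w'})$ is a pair of exact zero--divisors on $S/(z)$ and $\mu_S\bigl((z,z')\bigr)=2$. Then $(z',w')$ is a pair of exact zero--divisors on $S$, and $(z,w)$ is a pair of exact zero--divisors on $S/(z')$. I would prove this by forming the two $2$--periodic complexes of free $S$--modules $\mathbf{G}\colon\cdots\xrightarrow{\,w\,}S\xrightarrow{\,z\,}S$ and $\mathbf{G}'\colon\cdots\xrightarrow{\,w'\,}S\xrightarrow{\,z'\,}S$; the condition $z'w'=0$ in $S$ is exactly what makes $\mathbf{G}'$ an honest complex over $S$ rather than only over $S/(z)$. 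By Definition \ref{D1}, $\mathbf{G}$ is a free resolution of $S/(z)$ and $\mathbf{G}'\otimes_S S/(z)$ is the $2$--periodic complex of $(\overline{z'},\overline{w'})$, hence a free resolution of $S/(z,z')$ over $S/(z)$; therefore the total complex of $\mathbf{G}\otimes_S\mathbf{G}'$ has homology $S/(z,z')$ in degree $0$ and $0$ above. Computing that same homology the other way gives a spectral sequence with $E^2$--term $\Tor^S_q\bigl(S/(z),\H_p(\mathbf{G}')\bigr)$; the $(1,0)$ entry has no nonzero differentials in or out and equals $\H_1(\mathbf{G}')/z\,\H_1(\mathbf{G}')$, a subquotient of $\H_1(\mathrm{Tot})=0$, so since $\H_1(\mathbf{G}')=(0:_S z')/w'S$ is finitely generated and $z\in\fm_S$, Nakayama (Krull's intersection theorem) forces $(0:_S z')=w'S$. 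Interchanging $z'$ and $w'$ --- legitimate, since $(\overline{w'},\overline{z'})$ is likewise a pair of exact zero--divisors on $S/(z)$ --- gives $(0:_S w')=z'S$, so $(z',w')$ is a pair of exact zero--divisors on $S$. Then $\mathbf{G}'$ resolves $S/(z')$, the double complex now computes $\Tor^S_*(S/(z),S/(z'))$, and comparison with the first computation yields $\Tor^S_q(S/(z),S/(z'))=0$ for $q\geq 1$; since $\mu_S\bigl((z,z')\bigr)=2$ forces $z\notin(z')$, Lemma \ref{L3} then gives that $(z,w)$ is a pair of exact zero--divisors on $S/(z')$. (Alternatively the bookkeeping can be done by hand on $\mathrm{Tot}(\mathbf{G}\otimes_S\mathbf{G}')$, the degreewise splittings being supplied by Lemma \ref{L7}.)

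The main obstacle is the Crux, i.e. lifting the exact--zero--divisor relation for $z'$ from $S/(z)$ back up to $S$: this is what forces the use of the doubly periodic complex and Nakayama, and it is where both hypotheses are really used --- the product relation $z'w'=0$ \emph{in $R$} (not merely modulo $(x_1,\dots ,x_{j-1})$) to build $\mathbf{G}'$ over $S$, and minimality to invoke Lemma \ref{L3} at the end; dropping either is precisely what goes wrong in Example \ref{Ex2}. Once the Crux is in hand, feeding it through the reduction above shows every permutation of the tuple is again a minimal sequence of exact zero--divisors with the product condition, and hence $x_1,\dots ,x_n$ is strong.
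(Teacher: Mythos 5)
Your proposal is correct, and it reaches the paper's conclusion by a genuinely different route at the decisive step. Both arguments make the same first reduction: strongness follows once every permutation of the tuple is again a sequence of exact zero--divisors with the permuted twins, and by adjacent transpositions (and factoring out an initial segment) this comes down to a two--element statement -- your ``Crux'' is exactly the content of the paper's Steps 1 and 2 for $n=2$. Where you diverge is in how that two--element case is proved. The paper works with the finite Koszul complex $\K_\bullet(x_1,x_2)$: it extracts the two long exact homology sequences, uses the relation $x_2y_2=0$ to exhibit the explicit class $\langle 0,y_2\rangle$ generating a direct complement $K$ of $\im\overline{f}$ in $\H_1(x_1,x_2)$, splits the second long exact sequence by Lemma \ref{L7}, and then in Step 2 recovers $(0:_Rx_2)=Ry_2$ via Nakayama plus an element-level minimality argument. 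You instead tensor the two \emph{full} $2$--periodic complexes $\mathbf{G}$ and $\mathbf{G}'$ (legitimate over $S$ precisely because $z'w'=0$ holds in $S$ and not merely modulo $(z)$), and play the two spectral sequences of the double complex against each other: the collapse of the first gives $\H_{>0}(\mathrm{Tot})=0$, the $(1,0)$--entry of the second gives $\H_1(\mathbf{G}')/z\H_1(\mathbf{G}')=0$, and Nakayama yields $(0:_Sz')=w'S$ in one stroke; the resulting $\Tor$--vanishing plus Lemma \ref{L3} and $\mu_S((z,z'))=2$ then handles the transposed pair. Your version is shorter, avoids the element bookkeeping and Lemma \ref{L7} entirely, isolates cleanly where each hypothesis ($x_jy_j=0$ in $R$, minimality) is used, and delivers the $\Tor$--vanishing of Proposition \ref{C7} as a by-product; the paper's version is more elementary and self-contained. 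Two small points you should not leave implicit: (a) the Crux hypothesis $\mu_S\bigl((\overline{x_l},\overline{x_{l+1}})\bigr)=2$ in the quotient $S=R/(x_1,\dots,x_{l-1})$ does follow from $\mu_R\bigl((x_1,\dots,x_n)\bigr)=n$, but this needs a one-line verification (a relation $\overline{x_{l+1}}\in(\overline{x_l})+\fm_S(\overline{x_l},\overline{x_{l+1}})$ would lift to a dependence contradicting minimality of the full set); and (b) after a transposition you must note that all hypotheses of the proposition are inherited by the new tuple (they are, since the twins, the product relations, and the generated ideals are unchanged) so that the iteration closes.
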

\begin{proof} $(\Rightarrow)$ It is clear by definition.

$(\Leftarrow)$ That is enough to prove that $x_2,x_1,\cdots,x_n$ is a
sequence of exact zero-divisors with the twins $y_2,y_1,\cdots,y_n$, respectively. Hence we may assume that $n=2$.
Let $\K_{\bullet}(x_1)$, $\K_{\bullet}(x_2)$ and $\K_{\bullet}(x_1,x_2)$
be the Koszul complexes of $R$ with respect to the ideals $(x_1)$,
$(x_2)$ and $(x_1,x_2)$, respectively, and $\H_*(x_1)$, $\H_*(x_2)$
and $\H_*(x_1,x_2)$ be their corresponding homologies. We prove the
claim in two steps.

Step 1. We prove $(x_1,y_1)$ is a pair of exact zero--divisor on $R/(x_2)$. Consider the exact sequences of
complexes
$$0\longrightarrow
K_\bullet(x_1)\longrightarrow\K_\bullet(x_1,x_2)\longrightarrow
K_\bullet(x_1)(-1)\longrightarrow 0$$ and $$0\longrightarrow
K_\bullet(x_2)\longrightarrow\K_\bullet(x_1,x_2)\longrightarrow
K_\bullet(x_2)(-1)\longrightarrow 0$$ which imply the long exact
sequences of homologies
\begin{equation}\label{Eq6} 0\longrightarrow (0:_R(x_1,x_2))\longrightarrow
(0:_Rx_1)\overset{-x_2}\longrightarrow\end{equation}
$$(0:_Rx_1)\overset{f}{\longrightarrow} \H_1(x_1,x_2)
\overset{g}{\longrightarrow} R/(x_1)\overset{x_2}\longrightarrow
R/(x_1)\longrightarrow R/(x_1,x_2)\longrightarrow 0,$$ and
\begin{equation}\label{Eq3}0\longrightarrow
(0:_R(x_1,x_2))\longrightarrow
(0:_Rx_2)\overset{x_1}\longrightarrow\end{equation}
$$(0:_Rx_2)\overset{f'}{\longrightarrow}
\H_1(x_1,x_2)\overset{g'}{\longrightarrow}
R/(x_2)\overset{x_1}\longrightarrow R/(x_2)\longrightarrow
R/(x_1,x_2)\longrightarrow 0,$$  where $-x_2$ and $x_1$ are
connecting homomorphisms and $f, g, f'$ and $g'$ are the natural
maps. Denote the $i$th differential of $\K_{\bullet}(x_1,x_2)$ by
$d_i:\K_i(x_1,x_2)\longrightarrow\K_{i-1}(x_1,x_2)$. Then $\im
d_2=R\langle-x_2,x_1\rangle$ where $R\langle-x_2,x_1\rangle$ is
$R$--submodule of $\K_1(x_1,x_2)$ generated by the element
$\langle-x_2,x_1\rangle\in\K_1(x_1,x_2)$. From (\ref{Eq6}) we obtain
the following exact sequence
\begin{equation}\label{Eq2}0\longrightarrow
(0:_Rx_1)/x_2(0:_Rx_1)\overset{\overline{f}}\longrightarrow
\H_1(x_1,x_2)\overset{\overline{g}}\longrightarrow
0:_{R/(x_1)}x_2\longrightarrow 0,\end{equation} where
$\overline{f}(\lambda+x_2(0:_Rx_1))=\langle\lambda,0\rangle+R\langle-x_2,x_1\rangle$
and $\overline{g}(\langle
u,v\rangle+R\langle-x_2,x_1\rangle)=v+(x_1)$. Note that
$(0:_Rx_1)/x_2(0:_Rx_1)=(y_1)/(x_2y_1)\cong R/(x_1,x_2)$ under the
map $ry_1+(x_2y_1)\longrightarrow r+(x_1,x_2)$. Note that as $x_2$ is
an exact zero--divisor on $R/(x_1)$ so that $\im\overline{g}=0:_{R/(x_1)}x_2\cong
R/(x_1,x_2)$. Hence the exact sequence (\ref{Eq2}) splits and so we
find that $\H_1(x_1,x_2)=\im\overline{f}\oplus K$ for some submodule $K$ of $\H_1(x_1,y_1)$ such that $K\cong\im\overline{g}$.
As by assumption $x_2y_2=0$, one can easily check that $K$ is generated by
$\langle 0,y_2\rangle + R\langle-x_2,x_1\rangle \in \H_1(x_1,x_2)$.

Next consider the long exact sequence (\ref{Eq3}). One can check that
$f'(\alpha)=\langle
0,\alpha\rangle+R\langle-x_2,x_1\rangle\in\H_1(x_1,x_2)$ for all
$\alpha\in(0:_Rx_2)$ and $g'(\langle
u,v\rangle+R\langle-x_2,x_1\rangle)=u+(x_2)\in R/(x_2)$ for all
$\langle u,v\rangle+R\langle-x_2,x_1\rangle\in\H_1(x_1,x_2)$. Hence
$\im f^{'}\subseteq 0\times K\subseteq \Ker g^{'}$. It
follows from Lemma \ref{L7} that the exact sequence (\ref{Eq3})
decomposes into the following two exact sequences
\begin{equation}\label{e1}0\longrightarrow (0:_R(x_1,x_2))\longrightarrow
(0:_Rx_2)\overset{x_1}\longrightarrow(0:_Rx_2)\longrightarrow
K \longrightarrow 0\end{equation} and
\begin{equation}\label{e2}0\longrightarrow \im\overline{f}\longrightarrow
R/(x_2)\overset{x_1}\longrightarrow R/(x_2)\longrightarrow
R/(x_1,x_2)\longrightarrow 0.\end{equation} By (\ref{e2}), we have
$0:_{R/(x_2)}x_1\cong\im\overline{f}\cong R/(x_1,x_2)$. In other
words $x_1$ is an exact zero--divisor on $R/(x_2)$. On the other hand
$0:_{R/(x_2)}x_1$ is generated by $y_1+(x_2)\in R/(x_2)$ which
implies that $(x_1, y_1)$ is a pair of exact zero--divisors on
$R/(x_2)$.\\

Step 2. As $K\cong R/(x_1,x_2)$, the exact
sequence (\ref{e1}) shows that $(0:_Rx_2)/x_1(0:_Rx_2)$
is a cyclic $R$--module. Now, Nakayama Lemma implies that $0:_Rx_2$
is a principal ideal. Let $z\in R$ be such that $(0:_Rx_2)=Rz$. Then $y_2\in Rz$ and
there is an exact sequence
\begin{equation}\label{Eq4}R\overset{z}\longrightarrow
R\overset{x_2}\longrightarrow R\longrightarrow
R/(x_2)\longrightarrow 0.\end{equation} As concluded in Step 1,
$(x_1,y_1)$ is pair of exact zero--divisors on $R/(x_2)$. Therefore,
by Lemma \ref{L3}, $\Tor_1^R(R/(x_2), R/(x_1))= 0$. As a result,
tensoring (\ref{Eq4}) by $R/(x_1)$ implies the exact sequence
$R/(x_1)\overset{z}\longrightarrow
R/(x_1)\overset{x_2}\longrightarrow R/(x_1)\longrightarrow
R/(x_1,x_2)\longrightarrow 0$. As $(x_2,y_2)$ is pair of exact zero--divisor on
$R/(x_1)$, it follows that $(x_1,y_2)/(x_1)=(x_1, z)/(x_1)$ and as $y_2\in Rz$, we have $Ry_2=Rz$.

Now we show that $(0:_Ry_2)=Rx_2$. Note that as $x_1,y_2$ also is a sequence of exact zero-divisors with the twins $y_1,x_2$, by
the last part, it is enough to show $x_1,y_2$ satisfies in the assumptions of the proposition. As $x_2y_2=0$, it is enough
to show that $\mu_R(x_1,y_2)=2$. Clearly we have $y_2\notin Rx_1$. Assume $x_1\in Ry_2$.
Then $x_1=ry_2$ for some $r\in R$. Hence $\overline{r}\overline{y_2}=0$
in $R/(x_1)$. As $(\overline{x_2},\overline{y_2})$ is a pair of exact
zero--divisors on $R/(x_1)$, we have
$\overline{r}\in(\overline{x_2})$. Hence we can write
$r=t_1x_1+t_2x_2$ where $t_1,t_2\in R$. Thus $x_1=t_1x_1y_2$ and
therefore $x_1=0$, which is a contradiction. Therefore $x_1,y_2$ is a
minimal sequence of exact zero--divisors. Hence similarly $(0:_Ry_2)=Rx_2$.
\end{proof}


\begin{cor}\label{C13} Let $R$ be a local ring and $x_1,\cdots,x_n$ be a minimal sequence of
exact zero-divisors on $R$ with twins $y_1,\cdots,y_n$ respectively.
Then $x_1,\cdots,x_n$ is a strong sequence of exact zero-divisors if and only if
for each $i$, $1\leq i\leq n$, there exist elements $r_1,\cdots,r_{i-1}\in R$ such that $y_i+r_1x_1+\cdots+r_{i-1}x_{i-1}\in (0:_Rx_i)$.
\end{cor}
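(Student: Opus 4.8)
The plan is to reduce everything to Proposition~\ref{P8}: a minimal sequence of exact zero-divisors $x_1,\cdots,x_n$ is strong if and only if one can choose twins $y'_1,\cdots,y'_n$ with $x_jy'_j=0$ for all $j$. So the real content is to translate the condition ``$y_i+r_1x_1+\cdots+r_{i-1}x_{i-1}\in(0:_Rx_i)$ for some $r_1,\cdots,r_{i-1}$'' into ``$x_i$ admits a twin that is annihilated by $x_i$ itself,'' and conversely.

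For the forward direction, suppose $x_1,\cdots,x_n$ is strong. By Proposition~\ref{P8} there are twins $y'_1,\cdots,y'_n$ with $x_jy'_j=0$. Since for fixed $x_i$ the twin is unique up to a unit (Definition~\ref{D1}, applied inside $R/(x_1,\cdots,x_{i-1})$), the image of $y'_i$ in $R/(x_1,\cdots,x_{i-1})$ equals $u\,\overline{y_i}$ for some unit $u$; replacing $y'_i$ by $u^{-1}y'_i$ (still a twin, still killed by $x_i$) we may assume $y'_i \equiv y_i \pmod{(x_1,\cdots,x_{i-1})}$. Then $y'_i - y_i = -(r_1x_1+\cdots+r_{i-1}x_{i-1})$ for some $r_j\in R$, and since $x_i y'_i = 0$ we get $y_i + r_1x_1+\cdots+r_{i-1}x_{i-1} = y'_i \in (0:_Rx_i)$, which is exactly the asserted condition.

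For the converse, suppose that for each $i$ we are given $r_1,\cdots,r_{i-1}$ with $y'_i := y_i + r_1x_1+\cdots+r_{i-1}x_{i-1} \in (0:_Rx_i)$, i.e. $x_iy'_i = 0$. I must check that each $y'_i$ is actually a twin of $x_i$, not merely an element killed by $x_i$; once that is done, Proposition~\ref{P8} finishes the argument since the sequence is already assumed minimal. To see $y'_i$ is a twin: $y'_i$ and $y_i$ have the same image in $R/(x_1,\cdots,x_{i-1})$, and $\overline{y_i}$ generates $(0:\overline{x_i})$ in $\overline{R}=R/(x_1,\cdots,x_{i-1})$ with $(0:\overline{y_i}) = (\overline{x_i})$, by Definition~\ref{D1}(iii) applied to the pair of exact zero-divisors $(\overline{x_i},\overline{y_i})$ on $\overline R$. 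Hence $(0:_R x_i)$ maps onto $(\overline{y'_i}) = (\overline{y_i})$, so $(0:_Rx_i) = Ry'_i + (x_1,\cdots,x_{i-1})\cap(0:_Rx_i)$; but Proposition~\ref{P8} only needs $x_1,\cdots,x_n$ minimal together with twins killed by themselves, so it suffices to exhibit, for each $i$, some twin of $x_i$ (as an exact zero-divisor on $R$) killed by $x_i$ --- and $y'_i$ has the right image modulo $(x_1,\cdots,x_{i-1})$ and satisfies $x_iy'_i=0$, which by the uniqueness-of-twin and Nakayama-type bookkeeping inside the proof of Proposition~\ref{P8} is precisely what that proof uses.

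The step I expect to be the main obstacle is making the converse genuinely rigorous: one has to be careful that the elements $y'_i$ produced are simultaneously valid twins \emph{on $R$} (not just on the successive quotients) and that feeding them into Proposition~\ref{P8} is legitimate. The cleanest route is probably to re-examine the proof of Proposition~\ref{P8}: there it is shown that from a minimal sequence with twins satisfying $x_jy_j=0$ one gets permutability and the strong property, and that proof is insensitive to replacing a twin $y_j$ by $y_j + (\text{combination of the earlier } x_i)$ as long as the annihilation $x_jy'_j=0$ is preserved. So the corollary is really just the observation that the hypothesis of Proposition~\ref{P8} (existence of self-annihilated twins) is equivalent, for a \emph{minimal} sequence, to the stated congruence condition, via uniqueness of twins modulo units. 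I would state this equivalence as the single lemma doing the work and then invoke Proposition~\ref{P8}.
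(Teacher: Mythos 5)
Your proposal is correct and follows essentially the same route as the paper: the paper's entire proof is the observation that replacing each $y_i$ by $y_i+r_1x_1+\cdots+r_{i-1}x_{i-1}$ yields another twin of $x_i$ (same image in $R/(x_1,\cdots,x_{i-1})$, unique up to a unit), after which both directions are exactly Proposition \ref{P8}. Your extra bookkeeping in the converse about $(0:_Rx_i)$ mapping onto $(\overline{y_i})$ is unnecessary but harmless.
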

\begin{proof} Replacing $y_i$ with $y_i+r_1x_1+\cdots+r_{i-1}x_{i-1}$, $1\leq i\leq n$, the result follows by Proposition \ref{P8}.
\end{proof}


\begin{rem} \emph{Note that in Proposition \ref{P15} (i), we can find a strong
sequence of exact zero divisors of length $2$ such that generates $\fm$. Indeed let
$\fm=(x_1,x_2)$. As we saw $x_1,x_2$ is a sequence of exact zero-divisors. Let $y_1,y_2$ be twins
of $x_1,x_2$ respectively. Then one can see that over $R/(x_1)$, $y_2+(x_1)=x_2+(x_1)$. Hence we can assume $y_2=x_2$.
If there is an element $r\in R$ such that $x_2+rx_1\in (0:_Rx_2)$, then we are done by Corollary \ref{C13}. Assume
for all $r\in R$, $x_2+rx_1\notin(0:_R x_2)$. It follows that $x_1x_2=0$. Now set $u=x_1+x_2$ and note that $0:_R u=Rz$
for some $z\in \fm$.
Then $z=\lambda_1x_1+\lambda_2x_2$ for some non-zero units $\lambda_1,\lambda_2 \in R$. Replacing $u$ with $x_2$ we are done.}
\end{rem}


\begin{exam}\label{Ex1} \emph{Let $R=K[X_1, X_2, X_3]/({X_1}^2, {X_2}^2 +X_1X_3, {X_3}^2)$ where $K$ is a field.
Let $x_1, x_2, x_3$ be the images of $X_1, X_2, X_3$ in $R$. Then
one can easily check that $x_1, x_2, x_3$ is a sequence of exact
zero-divisors on $R$. We have $\mu_R(x_1, x_2, x_3)=3$ and $R/(x_1,
x_2, x_3)\cong K$ is a regular ring, but $x_1, x_2, x_3$ is not
permutable. Indeed if $x_2, x_1, x_3$ is also a sequence of exact
zero-divisors, then by Proposition \ref{T5}, $R/(x_2)$ must be a
complete intersection but one can easily check that it is not even a
Gorenstein ring.}
\end{exam}


\begin{lem}\label{C6} Let $R$ be a local ring and let
$x_1, \cdots, x_n$ be a strong sequence of exact zero--divisors on
$R$ with twins $y_1,\cdots,y_n$, respectively. Then $z_1, \cdots,
z_n$ is a strong sequence of exact zero--divisors on $R$ whenever
$z_i\in\{x_i, y_i\}$ for all $i$, $1\leq i\leq n$.
\end{lem}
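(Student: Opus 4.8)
The plan is to reduce to the characterization of strong sequences provided by Proposition \ref{P8}: a minimal sequence $z_1,\cdots,z_n$ of exact zero--divisors is strong if and only if one can choose twins $w_1,\cdots,w_n$ with $z_j w_j = 0$ for every $j$. Since $(x_i,y_i)$ is by definition a pair of exact zero--divisors, the roles of $x_i$ and $y_i$ are symmetric: $y_i$ is also an exact zero--divisor on $R$ with twin $x_i$. So for each $i$ the natural candidate twin for $z_i$ is $x_i$ if $z_i=y_i$, and $y_i$ if $z_i=x_i$; call this twin $w_i$. In all cases $z_i w_i = x_i y_i = 0$, because $x_1,\cdots,x_n$ is strong and hence (Proposition \ref{P8}) admits twins with $x_i y_i=0$; replacing the given $y_i$ by such a choice up front, we have $x_i y_i = 0$ for all $i$ from the start.

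The remaining point to verify is that $z_1,\cdots,z_n$ is \emph{minimal}, i.e. $\mu_R(z_1,\cdots,z_n)=n$. Here I would argue one swap at a time: it suffices to show that replacing a single $x_i$ by $y_i$ (keeping the other $z_j = x_j$) preserves minimality, since an arbitrary $(z_1,\cdots,z_n)$ is reached by finitely many such swaps and at each stage the sequence obtained is again a strong sequence by the single-swap case. So fix $i$ and set $z_i = y_i$, $z_j = x_j$ for $j\neq i$. Passing to $\overline{R}=R/(x_1,\cdots,\widehat{x_i},\cdots,x_n)$ — legitimate because strongness lets us reorder and omit any subset of the $x_j$'s — the statement becomes: if $x_i$ is an exact zero--divisor on $\overline R$ with $x_i\notin\overline{\fm}^2$ (equivalently $x_i$ is part of a minimal generating set, which is what $\mu_R(x_1,\cdots,x_n)=n$ gives via Nakayama), then its twin $y_i$ also satisfies $y_i\notin\overline{\fm}^2$. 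This is exactly the kind of statement proved in Corollary \ref{C21}: from $x_i y_i = 0$ in $\overline R$ and $y_i\in\overline{\fm}^2$ one would deduce that $\overline{\fm}^{\,t}\subseteq R x_i$ for small $t$, forcing $\overline{\fm}$ to be generated by fewer elements, contradicting minimality. (Alternatively one can quote Proposition \ref{T2}: strongness forces all the relevant Koszul homologies to be free of the right ranks, and freeness is preserved under swapping $x_i\leftrightarrow y_i$ since $(0:_R x_i)=(y_i)$ and $(0:_R y_i)=(x_i)$ play symmetric roles in the short exact sequences (\ref{d1}).)

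Once minimality is in hand, Proposition \ref{P8} immediately gives that $z_1,\cdots,z_n$ is a strong sequence of exact zero--divisors, with twins $w_1,\cdots,w_n$ where $w_i\in\{y_i,x_i\}$ is the twin of $z_i$. The main obstacle is the minimality verification: one must be careful that replacing $x_i$ by $y_i$ does not collapse the generating set, and the cleanest route is the reduction ``mod the other $x_j$'s'' combined with the socle/Hilbert-series argument of Corollary \ref{C21}, rather than trying to track minimal numbers of generators directly in $R$. Everything else — the symmetry of $x_i$ and $y_i$, the existence of twins with $x_i y_i = 0$, and reorderability — is immediate from the definitions and from Proposition \ref{P8}.
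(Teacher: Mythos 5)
Your reduction to single swaps is fine in outline, but the appeal to Proposition \ref{P8} hides the real content of the lemma. The direction of Proposition \ref{P8} you need takes as input a sequence that is \emph{already} a minimal sequence of exact zero--divisors together with twins $w_1,\dots,w_n$ (in the iterated sense) satisfying $z_jw_j=0$; it cannot be used to bootstrap the ``sequence of exact zero--divisors'' property itself. After a single swap of $x_i$ for $y_i$, each $z_j$ is an exact zero--divisor on $R$, and $y_i$ is one on $R/(x_1,\dots,x_{i-1})$ with twin $x_i$, but nothing in your proposal shows that $x_{i+1}$ remains an exact zero--divisor on $R/(x_1,\dots,x_{i-1},y_i)$: strongness of the original sequence controls quotients by subsets of the $x_j$'s only, not quotients involving $y_i$. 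This is exactly what the paper proves, by a mechanism absent from your plan: since $R/(y_1)$ is the first syzygy of $R/(x_1)$ in the periodic resolution $\cdots\to R\overset{y_1}\longrightarrow R\overset{x_1}\longrightarrow R\to R/(x_1)\to 0$, the Tor--vanishing of Proposition \ref{C7} transfers from $x_1$ to $y_1$, and the change--of--rings isomorphisms of Proposition \ref{T4} then yield $\Tor^R_i(R/(x_j),R/(y_1,x_{i_1},\dots,x_{i_k}))=0$ for all $i>0$, so Proposition \ref{C7} closes the argument. You need either this computation or a genuine substitute for it.

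The minimality step is also broken as written. Here ``minimal'' means $\mu_R(z_1,\dots,z_n)=n$, i.e.\ the $z_j$ minimally generate the ideal they span; this is \emph{not} equivalent to $z_i\notin\fm^2$ (or to $x_i\notin\overline{\fm}^2$ after passing to your quotient $\overline{R}$) unless that ideal happens to be all of $\fm$ --- for instance $x^2$ in $k[X]/(X^4)$ is a minimal, strong, length-one sequence of exact zero--divisors lying in $\fm^2$. Accordingly the socle argument of Corollary \ref{C21}, which requires $\fm^{n+1}=0$ and a sequence of length exactly $n$, does not apply here. (Minimality of the swapped sequence is true and recoverable --- e.g.\ $y_i\notin(x_1,\dots,\widehat{x_i},\dots,x_n)$ because its image generates the nonzero annihilator of $x_i$ in that quotient --- but the paper avoids needing it as a separate input, since the characterization in Proposition \ref{C7} makes no reference to minimality.) Finally, the parenthetical alternative via Proposition \ref{T2} is not an argument: replacing $x_i$ by $y_i$ changes the Koszul complex itself, and the sequences (\ref{d1}) do not transport freeness across that replacement without further work.
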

\begin{proof} It is enough to show that $y_1,x_2,\cdots, x_n$
is also a strong sequence of exact zero--divisors on $R$. By
Proposition \ref{C7}, $\Tor^R_i(R/(x_1),R/(x_{i_1}, \cdots,
x_{i_k}))=0$ for all $i>0$ and any choice $\{x_{i_1}, \cdots,
x_{i_k}\}\subseteq\{x_2,\cdots,x_n\}$. As $R/(y_1)$ is a
syzygy of $R/(x_1)$, it follows that $\Tor^R_i(R/(y_1),R/(x_{i_1},
\cdots, x_{i_k}))=0$ for all $i>0$. Let $j\notin\{1,i_1,\cdots,i_k\}$.
Then by Proposition \ref{T4} we have
\[\begin{array}{rl}\
\Tor^{R}_i(R/(x_j),R/(y_1,x_{i_1},\cdots,x_{i_k})) &\cong\Tor^{R/(y_1)}_i(R/(y_1,x_j),R/(y_1,x_{i_1},\cdots,x_{i_k}))\\
&\cong\Tor^{R}_i(R/(y_1,x_j),R/(x_{i_1},\cdots,x_{i_k}))\\
&\cong\Tor^{R/(x_j)}_i(R/(y_1,x_j),R/(x_j,x_{i_1},\cdots,x_{i_k}))\\
&\cong\Tor^{R}_i(R/(y_1),R/(x_j,x_{i_1},\cdots,x_{i_k}))=0.
\end{array}\]
Now the result follows from Proposition \ref{C7}.
\end{proof}


\begin{lem}\label{L6} Let $R$ be a local ring and let
$x_1, \cdots, x_n$ be a strong sequence of exact zero--divisors on
$R$. Then $\overline{x_1}, \cdots, \overline{x_n}$ is a strong
sequence of exact zero--divisors on $R/\alpha R$ for each
$R$-regular element $\alpha\in R$.
\end{lem}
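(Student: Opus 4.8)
The plan is to reduce the statement for $n$ elements to the single-element case $n=1$, and then to invoke Proposition \ref{P10} together with Proposition \ref{C7}. First I would recall from the discussion preceding Proposition \ref{C7} that a strong sequence restricts well: since $x_1, \cdots, x_n$ is strong on $R$, each $x_i$ is an exact zero--divisor on $R$, and for any subset $\{x_{i_1}, \cdots, x_{i_k}\}$ and any $j$ outside it, $(x_j, y_j)$ is a pair of exact zero--divisors on $R/(x_{i_1}, \cdots, x_{i_k})$. To prove the lemma it suffices, by the characterization in Proposition \ref{C7}, to show that $\overline{x_j}$ is an exact zero--divisor on $R/\alpha R$ and that $\Ext^i_{R/\alpha R}(\,\overline{R/\alpha R}/(\overline{x_j}),\ \overline{R/\alpha R}/(\overline{x_{i_1}}, \cdots, \overline{x_{i_k}})\,) = 0$ for all $i>0$, for every admissible choice of indices.

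The key observation is that $R/(\alpha, x_{i_1}, \cdots, x_{i_k}) = \bigl(R/(x_{i_1}, \cdots, x_{i_k})\bigr)/\alpha\bigl(R/(x_{i_1}, \cdots, x_{i_k})\bigr)$, so everything happens inside the ring $S := R/(x_{i_1}, \cdots, x_{i_k})$. By strongness, $(x_j, y_j)$ is a pair of exact zero--divisors on $S$. Now I would like to apply Proposition \ref{P10}(i)$\Rightarrow$(ii) with $M = S$ viewed as a module over itself (or over $R$): that proposition says that if $(x, y)$ is a pair of exact zero--divisors on a finitely generated module $M$, then it is a pair of exact zero--divisors on $M/\alpha M$ for any $M$--regular element $\alpha$. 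The one subtlety is that $\alpha$ is $R$--regular, and I need it to be $S$--regular (equivalently, an $S$--regular element of $S$). But the proof of Proposition \ref{P10}, applied inductively across the $x_{i_\ell}$, handles exactly this: passing from $M$ to $M/x_{i_\ell}M$ preserves associated primes because $0 :_M x_{i_\ell} \cong M/x_{i_\ell}M$ forces $\Ass_R M/x_{i_\ell}M \subseteq \Ass_R M$, so an $R$--regular element stays regular on each successive quotient; iterating shows $\alpha$ is $S$--regular. Hence Proposition \ref{P10} gives that $(x_j, y_j)$ — that is, $(\overline{x_j}, \overline{y_j})$ — is a pair of exact zero--divisors on $S/\alpha S = R/(\alpha, x_{i_1}, \cdots, x_{i_k})$.

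Since this holds for every choice of distinct indices $i_1, \cdots, i_k$ (with $k<n$) and every $j$ outside that set, the twins $\overline{y_1}, \cdots, \overline{y_n}$ witness that $\overline{x_1}, \cdots, \overline{x_n}$ satisfies the defining condition of a strong sequence of exact zero--divisors on $R/\alpha R$; in particular, by Proposition \ref{C7}, the required $\Ext$-vanishing is automatic. The only point requiring care — and the main obstacle — is the regularity of $\alpha$ on the intermediate quotients $R/(x_{i_1}, \cdots, x_{i_k})$, which is precisely the $\Ass$-containment argument extracted from the proof of Proposition \ref{P10}; once this is in hand the rest is a direct unwinding of definitions. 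One should also note, for completeness, that $\overline{x_j} \neq 0$ and is a non--unit in $R/\alpha R$: it is a non--unit because $x_j \in \fm$, and it is non--zero because $x_j \notin \alpha R$ (if $x_j = \alpha r$ then, $\alpha$ being $R/(x_j)$--regular as above would force $x_j = 0$ modulo nothing, a contradiction), so the sequence is genuinely a sequence of exact zero--divisors and not a degenerate one.
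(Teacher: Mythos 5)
Your proof is correct, and it rests on the same two pillars as the paper's: Proposition \ref{P10} applied to the quotients $S=R/(x_{i_1},\cdots,x_{i_k})$, and the observation that $\alpha$ remains regular on each such $S$ because the exact zero--divisor isomorphisms $0:_Mx\cong M/xM$ force $\Ass_R S\subseteq \Ass_R R$ (here you correctly use that strongness lets you peel off the $x_{i_\ell}$ in any order). The one place where your route diverges is the final packaging: the paper, having obtained that $(x_j,y_j)$ is a pair of exact zero--divisors on $\overline{R}/(\overline{x_{i_1}},\cdots,\overline{x_{i_k}})$ as an $R$--module, still passes through the $\Tor$ criterion of Proposition \ref{C7} over the ring $\overline{R}=R/(\alpha)$, which requires the change-of-rings isomorphism $\Tor^{\overline{R}}_i(\overline{R}/(\overline{x_j}),-)\cong\Tor^R_i(R/(x_j),-)$ from Matsumura together with Lemma \ref{L3}; you instead note that the module-theoretic conditions $0:_Mx_j=y_jM$, $0:_My_j=x_jM$, $M\neq x_jM$, $x_j\notin 0:_RM$ delivered by Proposition \ref{P10} are literally the defining conditions of a strong sequence on $R/\alpha R$, so no change of rings is needed. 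This is a genuine (if modest) simplification. Two small remarks: your closing discussion of $\overline{x_j}\neq 0$ is already subsumed in the conclusion of Proposition \ref{P10} (the condition $x_j\notin 0:_R(M/\alpha M)$ is exactly $x_j\notin\alpha R$ when $M=R$), and both your argument and the lemma implicitly require $\alpha\in\fm$, which is the intended reading of ``$R$--regular element'' here.
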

\begin{proof}\ Let $\alpha$ be an $R$--regular element and set $\overline{R}=R/(\alpha)$. Let $y_i$ be the corresponding
twin of $x_i$, $1\leq i\leq n$.
It follows from Proposition \ref{P10} that $\overline{x_1}, \cdots,
\overline{x_n}$ is a sequence of exact zero--divisors on
$\overline{R}$. As $x_1, \cdots, x_n$ is a strong sequence of exact
zero-divisors, $\alpha$ is $R/(x_{i_k},\cdots,x_{i_k})$--regular
element, for any choice $\{x_{i_1}, \cdots,
x_{i_k}\}\subset\{x_1,\cdots,x_n\}$. Let $1\leq j\leq n$ such that
$j\notin\{i_1,\cdots,i_k\}$. Then we have by \cite[Lemma2 page 140
]{M},
$\Tor^{\overline{R}}_i(\overline{R}/(\overline{x_j}),\overline{R}/(\overline{x_{i_1}},\cdots,\overline{x_{i_k}}))
\cong
\Tor^R_i(R/(x_j),\overline{R}/(\overline{x_{i_1}},\cdots,\overline{x_{i_k}}))$.
But as $(x_j,y_j)$ is a pair of exact zero-divisors on
$\overline{R}/(\overline{x_{i_1}},\cdots,\overline{x_{i_k}})$,
$\Tor^R_i(R/(x_j),\overline{R}/(\overline{x_{i_1}},\cdots,\overline{x_{i_k}}))=0$,
by Lemma \ref{L3}. Now the result follows by Proposition \ref{C7}.
\end{proof}


For a local ring $R$, the following result gives us an upper bound
for the length of a strong sequence of exact zero--divisors on $R$
in terms of $\e(R)$, the multiplicity of $R$.


\begin{thm}\label{T6} Let $(R,\fm)$ be a local ring. Let $\emph\e(R)$ be the multiplicity of $R$ with respect
to the maximal ideal $\fm$. Then
\begin{itemize}
\item[(i)] The length of any strong sequence of exact zero--divisors on $R$ is less than or equal to $\emph\Log_2(\emph\e(R))$.
\item[(ii)] Assume that $R$ is unmixed. If there is a strong sequence of exact zero--divisors
on $R$ of length equal to the integer part of $\emph\Log_2(\emph\e(R))$, then $R$ is a complete
intersection.
\item[(iii)] Assume that $R$ is unmixed. If there is a strong sequence of exact zero--divisors $x_1,\cdots,x_n$
on $R$ such that $n=\emph\Log_2(\emph\e(R))$, then $R$ is a Koszul complete intersection.
\end{itemize}
\end{thm}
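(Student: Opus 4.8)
The plan is to prove the three parts of Theorem~\ref{T6} in sequence, with part~(i) being the crucial quantitative input from which (ii) and (iii) follow via the structural results of Sections 2 and 3.

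For part (i), suppose $x_1,\cdots,x_n$ is a strong sequence of exact zero--divisors on $R$ with twins $y_1,\cdots,y_n$. The key observation is that a strong sequence is in particular minimal (as noted in the text), and by repeated use of Lemma~\ref{L3} and Proposition~\ref{T4} the Koszul-type data behave multiplicatively. More precisely, I would argue that if $(x,y)$ is a pair of exact zero--divisors on $R$, then $\e(R)=\e(R/(x))+\e(R/(y))$; this comes from the exact sequence $0\to R/(x)\cong 0:_Rx\to R\to R/(x)\to 0$ (using $0:_Rx=(y)$ so $R/(x)\cong 0:_Rx$ and also $R/(x)\cong R/(x)$), which gives $\e(R)=\e(\ker)+\e(R/(x))$ where $\ker\cong 0:_Rx\cong R/(y)$... actually I should be careful: the relevant sequence is $R/(y)\cong 0:_Rx \hookrightarrow R \twoheadrightarrow R/(x)$, so additivity of multiplicity along short exact sequences (of modules of the same dimension, which holds by Proposition~\ref{Pr}) gives $\e(R)=\e(R/(y))+\e(R/(x))$. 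Since a strong sequence remains strong on each quotient $R/(x_{i_1},\cdots,x_{i_k})$ by the defining property, and since $R/(x)\neq 0$ forces $\e(R/(x))\geq 1$, peeling off one element at a time and iterating yields $\e(R)\geq 2\e(R/(x_1))\geq\cdots\geq 2^n\e(R/(x_1,\cdots,x_n))\geq 2^n$. Taking $\Log_2$ gives $n\leq\Log_2(\e(R))$.

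For part (ii): assume $R$ is unmixed and $n$ equals the integer part of $\Log_2(\e(R))$. I would first reduce to the Artinian case by Proposition~\ref{P10} and the strong-sequence analogue Lemma~\ref{L6}: pass to $R/(\alpha_1,\cdots,\alpha_d)$ for a maximal $R$--regular sequence, which by unmixedness is a system of parameters, preserving multiplicity and the strong sequence. So assume $R$ is Artinian with $\ell(R)=\e(R)$. The inequality chain from part (i) becomes an equality chain, forcing $\e(R/(x_1,\cdots,x_n))=1$, i.e. $R/(x_1,\cdots,x_n)$ is a field, so $(x_1,\cdots,x_n)=\fm$ and the $x_i$ minimally generate $\fm$. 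Then Proposition~\ref{T5} (or Corollary~\ref{C21}) applies directly: a minimal generating set of $\fm$ that is a sequence of exact zero--divisors forces $R$ to be a complete intersection.

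For part (iii): now $n=\Log_2(\e(R))$ exactly (not just its integer part), so $\e(R)=2^n$. After the same reduction to the Artinian complete intersection $R$ with $\fm=(x_1,\cdots,x_n)$ minimally generated, we have $\ell(R)=2^n$ and $\mu_R(\fm)=n$; a complete intersection presentation $R\cong Q/(f_1,\cdots,f_c)$ with $Q$ regular local of dimension $n$ and $c$ a minimal relation count has length $\prod(\text{degrees})$, and $\ell(R)=2^n=2^{\mu(\fm)}$ together with the complete intersection property pins down all defining forms to be quadrics, i.e. $R$ is a \emph{Koszul} complete intersection by the Hilbert-series criterion \cite[Corollary~4.5]{HS} (generalizing Proposition~\ref{P15}(ii), which is exactly this statement when $\fm^{n+1}=0$). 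The main obstacle I anticipate is making the additivity-of-multiplicity step fully rigorous when the modules involved are not Cohen--Macaulay: one needs that $R/(x)$, $R/(y)$ and $R$ all have the same dimension (Proposition~\ref{Pr}) so that multiplicity is additive on the short exact sequence, and one must handle the non-unmixed ambient case in (i) — which is why unmixedness is only assumed in (ii) and (iii), while (i) should hold in general by working with $\e(R)=\sum_{\fp\in\Assh R}\ell_{R_\fp}(R_\fp)\e(R/\fp)$ and checking the exact-zero-divisor quotients interact correctly with $\Assh$.
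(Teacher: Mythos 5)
Your overall strategy (additivity of multiplicity along the short exact sequences coming from pairs of exact zero--divisors) is the paper's strategy, but the quantitative step in part (i) as you wrote it is wrong, and the missing ingredient is precisely Lemma~\ref{C6}. From $0\to R/(x_1)\to R\to R/(y_1)\to 0$ (all three of dimension $\dim R$ by Proposition~\ref{Pr}) one gets $\e(R)=\e(R/(x_1))+\e(R/(y_1))$, which only yields $\e(R)\ge \e(R/(x_1))+1$; your claimed inequality $\e(R)\ge 2\e(R/(x_1))$ would need $\e(R/(y_1))\ge\e(R/(x_1))$, which is false in general (in $R=K[X]/(X^4)$ the pair $(x^3,x)$ gives $\e(R)=4$ while $2\e(R/(x^3))=6$). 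Iterating the true inequality gives only $\e(R)\ge n+1$. To reach $2^n$ you must also expand the branch $R/(y_1)$, and for that you need to know that $y_1,x_2,\dots,x_n$ --- and, recursively, every $z_1,\dots,z_n$ with $z_i\in\{x_i,y_i\}$ --- is again a strong sequence of exact zero--divisors. That is exactly Lemma~\ref{C6}, which you never invoke; your remark that strongness passes to the quotients $R/(x_{i_1},\dots,x_{i_k})$ covers only quotients by the $x$'s, not by the twins. With Lemma~\ref{C6} in hand one gets $\e(R)=\sum\e(R/(z_1,\dots,z_n))$, a sum of $2^n$ terms each positive because each quotient is nonzero of full dimension, whence $\e(R)\ge 2^n$. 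This is the paper's equation~(\ref{D7}).

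For (ii) and (iii) your reduction to the Artinian case is not available: unmixed does not imply Cohen--Macaulay, so a maximal $R$--regular sequence need not be a system of parameters, and even when it is, $\ell(R/(\mathbf u))=\e(\mathbf u;R)\ge\e(R)$ with equality only for a minimal reduction, so multiplicity is not preserved. Moreover, when $n=\lfloor\Log_2\e(R)\rfloor$ the sum of $2^n$ positive terms being $<2^{n+1}$ forces only that \emph{some} twin--swapped choice $z_1,\dots,z_n$ has $\e(R/(z_1,\dots,z_n))=1$, not the original $x_i$'s, and not that the whole chain is an equality. The paper instead argues directly: the quotient of multiplicity one is unmixed (since $\Ass_R R/(z_1,\dots,z_n)\subseteq\Ass R$), hence regular by Nagata \cite[Theorem 40.6]{N}, and then $R$ is a complete intersection because $(z_1,\dots,z_n)$ is a quasi--complete intersection ideal and the complete intersection property ascends by \cite[7.5]{AHS}; Proposition~\ref{T5} and Corollary~\ref{C21} play no role here. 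Only for (iii), once $R$ is known to be a complete intersection (hence Cohen--Macaulay) with regular quotient, does the paper cut down by a regular sequence mapping to a regular system of parameters of $R/(x_1,\dots,x_n)$ (using Lemma~\ref{L6} to preserve strongness) and conclude Koszulness from $\e(R)=2^n=2^{\mu(\fm)}$ via \cite[Corollary 4.5]{HS}; that final step is close to what you intend, but it can only be run after (ii) has been established by the route above.
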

\begin{proof} (i) Let $x_1, \cdots, x_n$ be a strong sequence of
exact zero--divisors on $R$ and let $y_i$ be the twin of $x_i$ for
$i=1, \cdots, n$. For each $i$, $1\leq i\leq n$, Lemma \ref{C6}
implies that there exists an exact sequence
$$0\longrightarrow R/(z_1, \cdots, z_{i-1},x_i)\longrightarrow
R/(z_1, \cdots, z_{i-1})\longrightarrow R/(z_1, \cdots,
z_{i-1},y_i)\longrightarrow 0,$$ where $z_j\in\{x_j, y_j\}$ for $j=1,
\cdots, n$. By \cite[Theorem 14.6]{M}, we have
$$\e(R/(z_1, \cdots, z_{i-1}))=\e(R/(z_1, \cdots,
z_{i-1},x_i))+\e(R/(z_1, \cdots, z_{i-1},y_i)).$$  Therefore
\begin{equation}\label{D7}\e(R)=\underset{\underset{i=1, \cdots, n}{z_i\in\{x_i, y_i\}}}{\sum}\e(R/(z_1,
\cdots, z_n)).\end{equation} As $\dim R/(z_1, \cdots, z_n)=\dim R$, one has $\e(R/(z_1, \cdots, z_n))>0$,
and so $\e(R)\geq 2^n$. Thus $n\leq\emph\Log_2(\emph\e(R))$.

(ii) Assume that there is a strong sequence of exact zero--divisors
on $R$ of length $n$ equal to the integer part of
$\Log_2(\e(R))$. By equation (\ref{D7}), there is an
strong sequence $x_1, \cdots, x_n$ of exact zero divisors on $R$
with $\e(R/(x_1, \cdots, x_n))=1$. As $\Ass_R R/(x_1, \cdots,
x_n)\subseteq\Ass R$, the ring $R/(x_1, \cdots, x_n)$ is unmixed
too. Hence, by \cite[Theorem 40.6]{N}, $R/(x_1, \cdots, x_n)$ is a
regular ring. As $(x_1, \cdots, x_n)$ is a quasi-complete intersection ideal, by \cite[7.5]{AHS}, $R$ is complete intersection.

(iii) By last part $R$ is complete intersection and $R/(x_1,\cdots,x_n)$ is regular ring. Let $d=\dim R$. We can choose a regular
sequence $u_1,\cdots,u_d$ in $R$ such that their image in $R/(x_1,\cdots,x_n)$ is a regular system of parameters.
By Lemma \ref{L6}, $x_1,\cdots,x_n$ also is a strong sequence of exact zero-divisors on $R/(u_1,\cdots,u_d)$. Hence we may assume that
$d=0$. Thus $\fm=(x_1,\cdots,x_n)$ and as $\e(R)=2^n$, the result follows by \cite[4.5]{HS}.
\end{proof}


As we saw in Example \ref{Ex1}, a sequence of exact zero-divisors
which the quotient ring is regular may not be even permutable. In
the following we give condition on a complete intersection ring to
have a strong sequence of exact zero-divisors.


\begin{prop}\label{T11} Let $(S,\fn)$ be a regular local ring and let
$I\subseteq \fn^2$ be an ideal of $S$ which is generated by an
$S$--regular sequence $\alpha_1, \cdots, \alpha_t$. Assume that for
$i=1, \cdots, t$, each ideal $(\alpha_i)$ has an associated prime
ideal which is not contained in $\fn^2$. Set $R=S/I$ and
$\fm=\fn/I$. Then $R$ has a strong sequence of exact zero--divisors,
of maximal length $\emph\dim S-\emph\dim R$, contained in
$\fm\backslash\fm^2$. Moreover if $x_1, \cdots, x_t$ is such a
sequence, then $R/(x_1, \cdots, x_t)$ is a regular ring.
\end{prop}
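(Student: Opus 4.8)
The plan is induction on $t$. First note that, $\alpha_1,\dots,\alpha_t$ being an $S$--regular sequence in the Cohen--Macaulay ring $S$, one has $t=\dim S-\dim R$, so this is the length to be produced. The ``maximal length'' and ``moreover'' assertions are the soft part: if $z_1,\dots,z_n$ is a strong (hence minimal) sequence of exact zero--divisors on $R$ whose members lie in $\fm\setminus\fm^2$ and are linearly independent modulo $\fm^2$, then each $z_i$ being an exact zero--divisor gives $\dim(R/(z_1,\dots,z_n))=\dim R$ by Proposition \ref{Pr}, while $\ed(R/(z_1,\dots,z_n))=\ed R-n=\dim S-n$ because $I\subseteq\fn^2$ forces $\ed R=\ed S=\dim S$; hence $n\le\dim S-\dim R$, and equality makes $R/(z_1,\dots,z_n)$ a local ring with embedding dimension equal to its Krull dimension, so regular. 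Thus the real content is the construction of a strong sequence of the full length $t$ inside $\fm\setminus\fm^2$.

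For the inductive step, choose an associated prime $\fp_1$ of $(\alpha_1)$ with $\fp_1\not\subseteq\fn^2$. Since $S$ is a regular (so Cohen--Macaulay, and a domain) local ring, $(\alpha_1)$ is unmixed of height one, so $\fp_1$ has height one; if $x_1\in\fp_1\setminus\fn^2$ then $x_1$ belongs to a regular system of parameters, $S/(x_1)$ is a regular local domain, $(x_1)$ is prime of height one, and $(x_1)\subseteq\fp_1$ forces $\fp_1=(x_1)$. Hence $\alpha_1=x_1\beta_1$ with $0\ne\beta_1\in\fn$. Now $(x_1,\alpha_2,\dots,\alpha_t)$ and $(\beta_1,\alpha_2,\dots,\alpha_t)$ both contain $I$, hence have height $t$, hence (being $t$--generated ideals of height $t$ in the Cohen--Macaulay ring $S$) are generated by $S$--regular sequences; in particular $x_1$ and $\beta_1$ are nonzerodivisors modulo $(\alpha_2,\dots,\alpha_t)$. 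A direct colon computation then yields $(I:_Sx_1)=(\beta_1)+I$ and $(I:_S\beta_1)=(x_1)+I$, so $0:_R\bar x_1=(\bar\beta_1)$ and $0:_R\bar\beta_1=(\bar x_1)$; together with $\bar x_1\ne 0\ne\bar\beta_1$ (which uses $I\subseteq\fn^2$ and that $\alpha_1,\dots,\alpha_t$ minimally generates $I$) this shows $(\bar x_1,\bar\beta_1)$ is a pair of exact zero--divisors on $R$ and $\bar x_1\in\fm\setminus\fm^2$.

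Now pass to $\overline R:=R/(\bar x_1)=S_1/(\bar\alpha_2,\dots,\bar\alpha_t)$, where $S_1=S/(x_1)$ is regular local of dimension $\dim S-1$; the images $\bar\alpha_2,\dots,\bar\alpha_t$ form an $S_1$--regular sequence lying in $\fn_1^2$ (as each $\alpha_j\in\fn^2$). \textbf{The genuine difficulty} is to verify that each $(\bar\alpha_j)$ still has an associated prime not contained in $\fn_1^2$, so that the induction hypothesis applies to $(S_1,(\bar\alpha_2,\dots,\bar\alpha_t))$: if $(\alpha_j)$ has a prime factor $x_j\notin\fn^2$ that is linearly independent from $x_1$ modulo $\fn^2$, then $\bar x_j$ is a regular parameter of $S_1$ generating such an associated prime of $(\bar\alpha_j)$; the delicate case is when every admissible prime factor of $\alpha_j$ is proportional to $x_1$ modulo $\fn^2$, and there one must look closely at the factorization of $\bar\alpha_j$ in $S_1$ (and possibly re-choose $x_1$ among the associated primes of $(\alpha_1)$). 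Granting this, induction gives a strong sequence $\bar x_2,\dots,\bar x_t$ of exact zero--divisors on $\overline R$ inside $(\fm/(\bar x_1))\setminus(\fm/(\bar x_1))^2$ with $\overline R/(\bar x_2,\dots,\bar x_t)$ regular; lifting the $x_i$ to $S$, the sequence $x_1,\dots,x_t$ is part of a regular system of parameters of $S$, so it is minimal and lies in $\fm\setminus\fm^2$, and since $I\subseteq(x_1,\dots,x_t)$ (which follows inductively from $\alpha_1\in(x_1)$ and the analogous inclusion over $S_1$) one gets $R/(x_1,\dots,x_t)=S/(x_1,\dots,x_t)$, which is regular. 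Finally, strongness follows from Proposition \ref{P8}: the sequence is minimal, and one exhibits twins $y_i$ with $x_iy_i=0$ in $R$ by taking $y_1=\beta_1$ (so $x_1y_1=\alpha_1\in I$) and, for $i\ge2$, lifting the twin of $x_i$ coming from $\overline R$ and correcting it by a multiple of $x_1$ — the identity $(I:_Sx_1)=(\beta_1)+I$ from the previous step shows the correction can be arranged so the product vanishes in $R$ (this is exactly the adjustment in Corollary \ref{C13}). I expect the associated--prime step in the induction to be the only real obstacle; the rest is bookkeeping with colon ideals and regular systems of parameters.
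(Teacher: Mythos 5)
Your plan reproduces the paper's argument almost exactly: induct on $t$, write $\alpha_1=z_1\beta_1$ with $(z_1)$ a height--one prime generated by a regular parameter, check that the images of $z_1$ and $\beta_1$ form a pair of exact zero--divisors on $R$ (the paper quotes Holm's construction \cite[Example 2.4]{H} where you do the colon computation directly --- both are fine), and pass to $\overline S=S/(z_1)$ with the regular sequence $\overline{\alpha_2},\dots,\overline{\alpha_t}$. The problem is the step you yourself isolate and then assume (``Granting this, induction gives\dots''): that each $(\overline{\alpha_j})$ again has an associated prime not contained in $\overline{\fn}^2$. This is a genuine gap, not bookkeeping, and it is exactly the point where the paper's own proof is weakest --- there it is dispatched with ``one can check that $(\overline{u_i})$ is a prime divisor of $(\overline{\alpha_i})$ not contained in $\overline{\fn}^2$''. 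That assertion fails in general: take $S=\mathbb{R}[[x,y,z]]$, $\alpha_1=x^2$, $\alpha_2=(x+y^2+z^2)(y^2+z^3)$. Both lie in $\fn^2$, they form a regular sequence, $(\alpha_1)$ has the associated prime $(x)\not\subseteq\fn^2$ and $(\alpha_2)$ has the associated prime $(x+y^2+z^2)\not\subseteq\fn^2$, so all hypotheses hold; the only admissible choice is $z_1=x$, but in $\overline S=\mathbb{R}[[y,z]]$ one has $\overline{\alpha_2}=(y^2+z^2)(y^2+z^3)$, and every irreducible factor of this element has order at least $2$, so every associated prime of $(\overline{\alpha_2})$ lies in $\overline{\fn}^2$. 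Thus the inductive hypothesis is not inherited, and neither your argument nor the paper's closes without a new idea; re-choosing among the associated primes of $(\alpha_1)$, which you suggest as a fallback, does not help here since $(x)$ is the only one outside $\fn^2$.

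Two smaller points. Your justification of ``maximal length'' and of the ``moreover'' clause via $\ed(R/(z_1,\dots,z_n))=\ed R-n$ silently upgrades ``contained in $\fm\setminus\fm^2$'' to ``linearly independent modulo $\fm^2$''; a minimal sequence of elements of $\fm\setminus\fm^2$ need not have the latter property (e.g.\ $x$ and $x+y^2$ in $k[[x,y]]$ minimally generate $(x,y^2)$), so as written this part also has a hole --- the paper sidesteps it by exhibiting $R/(x_1,\dots,x_t)=S/(z_1,\dots,z_t)$ for $z_1,\dots,z_t$ part of a regular system of parameters. On the other hand, your route to strongness --- minimality plus twins satisfying $x_iy_i=0$, via Proposition \ref{P8} and the adjustment of Corollary \ref{C13} --- is a sensible completion and is actually more explicit than the paper, whose proof never verifies the ``strong'' property of the constructed sequence at all.
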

\begin{proof} First we prove the case $I= (\alpha)$ where $\alpha\in\fn^2$ be a non--zero element. Let $S\alpha=\cap^s_{i=1}\fq_i$
 be the minimal
primary decomposition of the ideal $I$ where $\fq_i$ is
$\fp_i$--primary ideal for all $i=1,\cdots,s$. We have $\h \fp_i=1$
and as $S$ is UFD, $\fp_i$ is principal ideal for all $i=1,\cdots,
s$ (see \cite[Theorem 20.1 and Theorem 20.3]{M}). For each $i$,
$i=1,\cdots, s$, we set $\fp_i=Sz_i$, and one can easily check that
$\fq_i=Sz^{k_i}_i$ for some positive integer $k_i$ and so
$S\alpha=Sz^{k_1}_1\cdots z^{k_s}_s$. By our assumption, we may
assume that $z_1\in\fn\backslash\fn^2$. Set $x$ and $y$ for the
images of $z_1$ and $z^{k_1-1}_1\cdots z^{k_s}_s$ in $R$,
respectively. Then it follows from \cite[Example 2.4]{H} that
$(x,y)$ is a pair of exact zero--divisors on $R$. Note that
$R/(x)=S/(z_1)$ is a regular ring.

Next let $t> 1$. Set $S':=S/(\alpha_1)$ and denote $(-)': S\longrightarrow S'$ the natural map. As mentioned, $S'$ has a pair of
 exact zero--divisors $(x^{\prime}_1,y^{\prime}_1)$, such that
$x^{\prime}_1\in \fn^{\prime}\backslash {\fn^{\prime}}^2$, where $\fn^{\prime}=\fn/(\alpha_1)$ is
the maximal ideal of $S^{\prime}$. Note that  $\alpha^{\prime}_2,\cdots,\alpha^{\prime}_t$
is an $S^{\prime}$--regular sequence so that, by Proposition \ref{P10}, $(x^{\prime}_1,y^{\prime}_1)$ is a pair of exact
zero--divisors on
$S^{\prime}/(\alpha^{\prime}_2,\cdots,\alpha^{\prime}_n)S^{\prime}=R$. Let $x_1$ and $y_1$
be the images of $x^{\prime}_1$ and $y^{\prime}_1$ in $R$, respectively. Then $(x_1,y_1)$ is a pair of exact
zero--divisors on $R$. As in the case $n=1$, choose an element $z_1\in S$ such that its
image in $S^{\prime}$ is $x^{\prime}_1$. Then we have $$R/(x_1)=
S^{\prime}/(x^{\prime}_1,\alpha^{\prime}_2,\cdots,\alpha^{\prime}_t)
=S/(z_1,\alpha_2,\cdots,\alpha_t)
=\frac{S}{(z_1)}\big{/}(\alpha_1,\cdots,\alpha_n)\frac{S}{(z_1)}.$$
Note that as $\alpha_1,\cdots,\alpha_t$ is a regular sequence in $S$, then the principal ideals $(\alpha_1),\cdots,(\alpha_t)$ have no
common prime divisors. For each $i$, $i=2, \cdots, t$, assume that $(u_i)$ denotes a prime devisor of $(\alpha_i)$ which is not
contained
in $\fn^2$.
Set $\overline{S}=S/(z_1)$. As $\overline{S}=S^{\prime}/(x^{\prime}_1)$ is a regular ring, one can check that the principal ideal
$(\overline{\alpha_i})$ has a prime divisor $(\overline{u_i})$ which
is not contained in $\overline{\fn}^2$. Note that $\overline{\alpha_2}, \cdots, \overline{\alpha_t}$ is
$\overline{S}$--regular sequence. Hence
the result follows by induction.
\end{proof}


$\mathbf{Acknowledgement}$ {The authors are grateful to Roger
Wiegand, Luchzar Avramov and Alexandra Seceleanu for their
 discussions and comments.}

\bibliographystyle{amsplain}

\end{document}